\let\@fnsymbol\@arabic
\newtheorem{lemma}{Lemma}
\newtheorem{remark}{Remark}
\newtheorem{theorem}{Theorem}
\newtheorem{proposition}{Proposition}
\newtheorem*{example}{Example}
\newtheorem{corollary}{Corollary}
\def\e{\varepsilon}
\def\cC{{\mathcal C}}
\newcommand{\R}{\mathbb R}
\renewcommand{\L}{\mathcal L}
\newcommand{\supp}{\mathrm{supp}\,}
\numberwithin{equation}{section}
\begin{document}

\title{\bf Weak averaging principle for multiscale stochastic dynamical systems driven by stable processes}

\author{\normalsize{\bf Yanjie Zhang$^{a,}\footnote{zhangyj2022@zzu.edu.cn}$, 
Qiao Huang$^{b,c,}\footnote{qiao.huang@ntu.edu.sg}$, 
Xiao Wang$^{d,}\footnote{ xwang@vip.henu.edu.cn}$,
Zibo Wang$^{e,} \footnote{ zibowang@hust.edu.cn }$,
and Jinqiao Duan$^{f,}\footnote{duan@gbu.edu.cn}$} \\[10pt]
\footnotesize{${}^a$ Henan Academy of Big Data, Zhengzhou University, Zhengzhou 450001, China.} \\[5pt]
\footnotesize{${}^b$ Division of Mathematical Sciences, School of Physical and Mathematical Sciences, Nanyang Technological University,} \\
\footnotesize{21 Nanyang Link, Singapore 637371.} \\[5pt]
\footnotesize{${}^c$ Group of Mathematical Physics (GFMUL), Department of Mathematics, Faculty of Sciences, University of Lisbon,} \\
\footnotesize{Campo Grande, Edif\'{\i}cio C6, PT-1749-016 Lisboa, Portugal.} \\[5pt]
\footnotesize{${}^d$ School of Mathematics and Statistics, Henan University, Kaifeng 475001, China.} \\[5pt]
\footnotesize{${}^e$ Center for Mathematical Sciences, Huazhong University of Science and Technology, Wuhan 430074, China.} \\[5pt]
\footnotesize{${}^f$ Department of Mathematics \& Department of Physics, Great Bay University, Dongguan 523000, China.}
}

\date{}
\maketitle
\vspace{-0.3in}

\begin{abstract}
We study the averaging principle for a family of multiscale stochastic dynamical systems. The fast and slow components of the systems are driven by two independent stable L\'evy noises, whose stable indexes may be different. The homogenizing index $r_0$ of slow components has a relation with the stable index $\alpha_1$ of the noise of fast components given by $0<r_0<2-2/{\alpha_1}$. By first studying a nonlocal Poisson equation and then constructing suitable correctors, we obtain that the slow components weakly converge to a L\'evy process as the scale parameter goes to zero.
\bigskip\\
  \textbf{AMS 2010 Mathematics Subject Classification:}  \\
  \textbf{Keywords and Phrases:} Nonlocal Poisson equation, averaging principle, tightness, stable L\'evy noises.
\end{abstract}


\section{Introduction.}

Multiscale stochastic dynamical systems arise widely in various areas \cite{GA,GA1,JA,MK,Huang,Huang23}. Finding a coarse-grained model that can effectively characterize the asymptotic behavior of such systems has always been an active research field. Multiscale systems driven by Gaussian noises were originally studied by Khasminski \cite{KR}. Later on, Stroock \cite{SD}, Pardoux \cite{EP} and their collaborators verified the relatively weak compactness of slow components. The former used the martingale approach while the latter used the method of correctors which were constructed via Poisson equations. Then, E et al. \cite{WE} showed the weak and strong convergence results by using the heterogeneous multiscale method. Recently, Cerrai and Freidlin \cite{SM} extended averaging principles to the infinite dimensional case, and Hairer and Li \cite{MHL} investigated a multiscale system driven by fractional Brownian motion.  R\"ockner et al. \cite{Rock} figured out sharp rates, normal derivations, and functional central limits for different multiscale stochastic systems with Brownian noises.

However, random fluctuations in nonlinear systems are sometimes non-Gaussian. A number of authors have studied multiscale dynamical systems driven by stable L\'evy noises. To name a few, Yin and his collaborators \cite{bg} discovered that a stochastic partial differential equation (SPDE) or an SPDE with switching arose as the limit of a multiscale system. The authors in \cite{YZ} combined averaging principles with stochastic filtering problems and showed the convergence of filters. The paper \cite{XS1} studied both strong and weak convergence with different rates. Notably, these results were all based on the fact that the ratio of time between fast and slow components is of order $O(1/\varepsilon)$. A natural and important question is: for multiscale stochastic systems with homogenization terms involved in slow components, how to obtain the corresponding averaged systems when the fast-slow ratio of time is of order $O(1/{\varepsilon}^2)$?

In this paper, we consider the following coupled multiscale system in $\mathbb{R}^{n+m}$:
\begin{equation}
\label{slo0}
\left\{
\begin{aligned}
  d{X^{\varepsilon}_t}& =\frac{1}{\varepsilon^2}b({X^{\varepsilon}_t}, Y^{\varepsilon}_t)dt + \frac{1}{\varepsilon^{\frac{2}{\alpha_1}}}dL_t^{{\alpha_1}}, \;\;  X^{\varepsilon}_0=x\in \mathbb{R}^n, \\
  d{Y^{\varepsilon}_t}&= F({X^{\varepsilon}_t},{Y^{\varepsilon}_t})dt + \frac{1}{\varepsilon^{r_0}}G({X^{\varepsilon}_t},{Y^{\varepsilon}_t})dt + dL_t^{{\alpha}_2}, \;\; Y^{\varepsilon}_0=y \in \mathbb{R}^m,
\end{aligned}
\right.
\end{equation}
where $b, F, G$ are Borel measurable functions on $\mathbb{R}^n \times \mathbb{R}^m$,  the two noises  $L^{{\alpha _i}}, i=1,2$  are independent symmetric $\alpha_i$-stable ($1<\alpha_i <2$) L\'evy processes with triplets $(0,0,{\nu_i})$, and $\nu_i$'s are symmetric $\alpha_i$-stable L\'evy measures (e.g. \cite[Section 14]{Sat99}) on $\R^n$ and $\R^m$ respectively, $\varepsilon>0$ is the scaling parameter, the homogenizing index $r_0$ in slow component satisfies
\begin{equation}\label{index}
  0 <r_0 < 2-2/{\alpha_1}.
\end{equation}
The solution process $({X^{\varepsilon}},{Y^{\varepsilon}})$ is an ${\mathbb{R}^n} \times {\mathbb{R}^m}$-valued process, in which $X^\e$ is called the fast component and $Y^\e$ the slow one. The detailed assumptions on coefficients can be found in Section \ref{sec-2}. The infinitesimal generator $\mathcal{L}^{\varepsilon}$ of the solution $({X^{\varepsilon}},{Y^{\varepsilon}})$ has the following form
\begin{equation*}
\mathcal{L}^{\varepsilon}:=\frac{1}{\varepsilon^2} \left[ -\left(-\Delta_{x}\right)^{\frac{\alpha_1}{2}}+b \cdot \nabla_{x} \right] +\frac{1}{\varepsilon} G \cdot \nabla_{y} + \left[ -\left(-\Delta_{y}\right)^{\frac{\alpha_2}{2}}+F \cdot \nabla_{y} \right].
\end{equation*}

The aim of this paper is to study the weak convergence of the averaging principle for system \eqref{slo0}. Our work is divided into two parts. In the first part, we will examine the well-posedness of a nonlocal Poisson equation associated with an ergodic jump process, and establish the probability representation of its solution. This nonlocal Poisson equation is usually called the \emph{cell problem} for the averaging problem. In the second part, we will prove the weak averaging principle by constructing suitable ``correctors'' via the nonlocal Poisson equation studied in the first part. We will show that the slow component $Y^\e$ of \eqref{slo0} weakly converges to an averaged process as the scale parameter $\varepsilon$ tends to zero. 
The main result is as follows.

\begin{theorem}
\label{theorem2}
Under Hypotheses ($\bf{A_{b}}$), ($\bf{A_{F}}$), ($\bf{A_{G1}}$) and ($\bf{A_{G2}}$) given in Section \ref{sec-2},  for any $r_0$ satisfying \eqref{index},  the slow component $Y^{\varepsilon}$ converges weakly to a limit process $Y$ as $\varepsilon$  goes to zero. Moreover, the limit process $Y$ is the unique solution to the martingale problem associated with the following operator
\begin{equation*}
\label{l2}
\mathcal{L}_2 = -\left(-\Delta_y\right)^{\frac{\alpha_2}{2}} + {\bar{F}}(y)\cdot \nabla_y,
\end{equation*}
where $\bar F$ is the homogenized drift given by $\bar{F}(y)=\int_{\mathbb{R}^n} F(x,y){\mu^{y}}(dx)$,
and $\mu^{y}$ is an invariant measure (whose existence will be ensured in Lemma \ref{ergodicity0}) of the corresponding frozen equation
\begin{equation*}
dX_t=b(X_t, y)dt+dL^{\alpha_1}_{t}, ~~~X_0=x \in \mathbb{R}^n,
\end{equation*}
that is, for every $0\le t_0<t\le T$,  $\phi\in C^{\infty}_{0}(\mathbb{R}^m)$ and any bounded function  $\Phi$ on $\mathbb{D}\left([0,T]; \mathbb{R}^{m}\right)$ which is measurable with respect to the $\sigma$-field $\sigma\left(\omega_s: \omega \in \mathbb{D}\left([0,T]; \mathbb{R}^{m}\right), t_0 \le s \leq t\right)$,
\begin{equation*}
\mathbb{E}\left[\left(\phi(Y_t)-\phi(Y_{t_{0}})-\int^{t}_{t_0} \mathcal{L}_2\phi(Y_s)ds\right)\Phi(Y)\right]=0.
\end{equation*}
\end{theorem}

In contrast to the case of Brownian noise in \cite[Theorem 3]{EP}, Theorem \ref{theorem2} shows that the homogenization term $G$ in the slow component $Y^{\varepsilon}$ does not affect the limit process $Y$. This phenomenon is due to the feature of stable L\'evy noise, as we will see in Subsection \ref{4.2} where a corrector of order $\varepsilon^{2-r_0}$ will be constructed to prove the tightness of $\{Y^{\varepsilon}\}_{\varepsilon >0}$.  Our argument is slightly different from existing works. Firstly, a Poisson equation in $\mathbb{R}^{n}$ for an elliptic operator corresponds to an ergodic diffusion process in the case of Brownian noise. This hints that a nonlocal Poisson equation for a nonlocal operator should correspond to an ergodic diffusion process driven by $\alpha$-stable noise. Secondly, the technique of Poisson equations will be used not only to obtain the optimal rate for the strong convergence as in the Brownian driven case, but also to prove the tightness of slow components for the weak convergence. We also remark that for technical reasons (cf. the proof of Lemma \ref{lipc}), we require in the definition of the martingale solution of $\mathcal L_2$ that $\phi$ is in $\mathcal C^{\infty}_{0}$, the space of infinitely differentiable functions with compact support. This is quite standard in the formulation of martingale problems of L\'evy-type operators, cf \cite{BSW13}.
Compared with the results of Brownian driven case \cite{EP, Rock}, our averaging problem has some difficulties to overcome. Firstly, since the L\'evy noise in our case is not square integrable, the solution $\left(X^{\varepsilon}_t, Y^{\varepsilon}_t\right)$ has finite $p$-th moment only for $p \in (0,\alpha_1 \wedge \alpha_2)$ (cf. \cite[Theorem 25.3]{Sat99}). 
Secondly, the cell problem in our case, i.e., the Poisson equation associated with the generator of a jump process, is \emph{nonlocal}. Thirdly, the homogenizing index $r_0$ is now related to the stable index $\alpha_1$ and the relation \eqref{index} is exactly what we need to seek.

\begin{example}

Consider the following fast-slow model
\begin{equation*}
\left\{
\begin{aligned}
d{X^{\varepsilon}_t}& =-\frac{{X^{\varepsilon}_t}}{\varepsilon^2}dt + \frac{1}{\varepsilon^{\frac{2}{\alpha_1}}}dL_t^{{\alpha}_1}, \;\;  X^{\varepsilon}_0=x\in \mathbb{R}, \\
d{Y^{\varepsilon}_t}&= \frac{\sin X^{\varepsilon}_t}{\varepsilon^{r_0}}dt + dL_t^{{\alpha}_2}, \;\; Y^{\varepsilon}_0=y \in \mathbb{R},
\end{aligned}
\right.
\end{equation*}
where $b(x)=-x$, $\alpha_1=1.5$, $F(x,y)=0$,  $G(x,y)=\sin x$ and $r_0=2-\frac{2}{1.5}=\frac{2}{3}$.
It is easy to justify that $b, F, G$ satisfy Hypotheses ($\bf{A_{b}}$), ($\bf{A_{F}}$), ($\bf{A_{G1}}$),($\bf{A_{G2}}$). Using a result in \cite{ar}, we find the invariant measure   $\mu(dx)=\rho(x)dx$ with density
\begin{equation*}
\rho(x)=\frac{1}{{2\pi}}\int_{\mathbb{R}}e^{ix\xi}e^{-\frac{1}{\alpha}|\xi |^{\alpha}}d\xi=\frac{1}{\pi}\int ^{\infty}_{0}\cos x\xi \cdot e^{-\frac{1}{\alpha}\xi ^{\alpha}}d\xi.
\end{equation*}
It follows from Theorem \ref{theorem2} that the averaged equation for $Y^{\varepsilon}_t$ is
\begin{equation*}
d\bar{Y}_t= dL^{\alpha_2}_{t}, ~~\bar{Y}_0=y,
\end{equation*}
and the slow component $Y^{\varepsilon}$ converges weakly to a process $\bar{Y}$ as the scale parameter $\varepsilon$  goes to zero.

\end{example}

The paper is organized as follows.  In Section \ref{sec-2},  we will list all assumptions for the coefficients that are required by our main result.
Section \ref{sec-3} is devoted to the existence of a nonlocal Poisson equation in the whole space. In Section \ref{sec-4}, we will use the nonlocal Poisson equation to construct a corrector and apply it to study the weak averaging principle for the multiscale system \eqref{slo0}. Section \ref{sec-5} is reserved for some concluding remarks. Some tedious proofs of lemmas are left in Appendix \ref{app-1}.

To end the introduction, we list the notations that will be used frequently in the sequel. The letter $C$ denotes a positive constant whose value may change from one line to another. The notation $C_p$ is used to emphasize that the constant depends only on a parameter $p$, while $C(\cdots)$ is used for the case that there is more than one parameter. We use $\otimes$, $\langle \cdot, \cdot \rangle$, and $|\cdot|$ to denote the tensor product, inner product, and norm in Euclidean spaces, respectively. We also use $\nabla$ to denote the gradient operator in Euclidean spaces, and $\nabla_x$ to emphasize that the gradient is with respect to the variable $x$. For any positive integer $k,l$ and probability measure $\mu$, we introduce the following function spaces:
\begin{equation*}
\begin{aligned}
&\mathcal{B}_{b}(\mathbb{R}^{n}):=\left\{f: \mathbb{R}^{n}\rightarrow \mathbb{R}\mid f\text{ is bounded Borel measurable}\right\},\\
&\mathcal{C}_{0}(\mathbb{R}^{n}):=\left\{f: \mathbb{R}^{n}\rightarrow \mathbb{R}\mid f\text{ is continuous and has compact support}\right\},\\
&\mathcal{C}^{\mu}_{0}(\mathbb{R}^{n}):=\left\{f \in\cC_{0}(\R^n)\mid f\text{ is centered with respect to the measure $\mu$, i.e., $\textstyle{\int_{\mathbb{R}^{n}} f(x) \mu(dx)} = 0$}\right\},\\
&\cC^{k}(\mathbb{R}^{n}):=\left\{f: \mathbb{R}^{n}\rightarrow \mathbb{R}\mid f\text{ and all its partial  derivatives up to order $k$ are continuous} \right\},\\
&\cC^{k}_{b}(\mathbb{R}^{n}):=\left\{f \in \cC^{k}(\mathbb{R}^{n})\mid\text{ for  $ 1\leq i \leq k$, the $i$-th order partial  derivatives of $f$ are bounded} \right\},\\
&\cC^{k,l}_{b}(\mathbb{R}^{n}\times \mathbb{R}^{m} ):=\left\{f(x,y)\mid \text{ for $1\leq |\beta_1| \leq k$ and $1\leq |\beta_2|\leq l$, $\nabla^{\beta_1}_{x}\nabla^{\beta_2}_{y}f $ is uniformly bounded} \right\}.
\end{aligned}
\end{equation*}
We equip the space $\mathcal{C}_{0}(\mathbb{R}^{n})$ with the norm $\|f\|_{0}=\sup_{x \in \mathbb{R}^n}|f(x)|$ and $\cC^{k}(\mathbb{R}^{n})$ with the norm $\|f\|_{k}=\|f\|_{0}+\sum_{j=1}^k\|\nabla^{\otimes j}f\|$. It is clear that $(\cC^{k}(\mathbb{R}^{n}),\|\cdot\|_{k})$ is a Banach space. \\

\section {Assumptions.}\label{sec-2}

In this section, we collect all assumptions we need for the coefficients of system \eqref{slo0}. First of all, we need some regularity assumptions for drifts $b$, $F$ and $G$.

%

\noindent ($\bf{A_{b}}$):
Suppose that $ b\in \mathcal{C}^{2,2}_{b}(\mathbb{R}^{n}\times \mathbb{R}^{m} )$, and there exists a positive constant $\gamma$ such that for all $x_1, x_2 \in \mathbb{R}^{n}$,
\begin{equation}
\label{0disb}
  \sup_{y\in \mathbb{R}^{m}} |b(0,y)|< \infty, \quad \sup_{y\in \mathbb{R}^{m}} \langle b(x_1,y)-b(x_2,y), x_1-x_2 \rangle \le -\gamma|x_1-x_2|^2.
\end{equation}


%

\noindent ($\bf{A_{F}}$): The function $F$ is both Lipschitz and bounded, i.e., there exists a positive constant $K_1$ such that for all $x,x_1, x_2 \in \mathbb{R}^{n}$ and $y,y_1, y_2 \in \mathbb{R}^{m}$,
\begin{equation*}
| F(x_1,y_1)-F(x_2,y_2)|^{2} \leq K_1 ( | x_1-x_2|^2+| y_1-y_2|^2 ),
\end{equation*}
and
\begin{equation*}
\left|F(x,y)\right|\leq K_1.
\end{equation*}

\noindent ($\bf{A_{G1}}$):
The function $G $ satisfies the following conditions: there exists a positive constant $K_2$ such that for all $x,x_1, x_2 \in \mathbb{R}^{n}$ and $y,y_1, y_2 \in \mathbb{R}^{m}$,
\begin{equation*}
| G(x_1,y_1)-G(x_2,y_2)|^{2} \leq K_2 ( | x_1-x_2|^2+| y_1-y_2|^2 ),
\end{equation*}
and
\begin{equation*}
\begin{aligned}
&\left|G(x,y)\right|\leq K_2,
~~\sup_{x,y}|\nabla^2_x G(x,y)|\leq K_2,\\
&\sup_{x, y}|\nabla^2_{x}\nabla_y G(x,y)|\leq K_2, ~~
\sup_{x, y}|\nabla^3_x G(x,y)|\leq K_2.
\end{aligned}
\end{equation*}

We also need a centering assumption for the function $G$, as follows.

\noindent ($\bf{A_{G2}}$): For each $y \in \mathbb{R}^{m}$, the function $G(\cdot,y)$ is centered with respect to $\mu^y$, i.e.,
\begin{equation*}
\int_{\mathbb{R}^{m}} G(x,y)\mu^y(dx) =0,
\end{equation*}
where $\mu^y$ is an invariant measure of an ergodic Markov process $X^{x,y}$ (see \eqref{itro} below).

\section{The nonlocal Poisson equation.}\label{sec-3}

In this section, we study the following nonlocal Poisson equation in $\mathbb{R}^{n}$,
\begin{equation}
\label{poisson}
\mathcal{L}u(x)=-f(x),
\end{equation}
where $f$ is a given function on $\R^n$ and
\begin{equation*}
\label{gene}
\begin{aligned}
\mathcal{L}=-\left(-\Delta_x\right)^{\frac{\alpha_1}{2}}+\sum_{i} b_i(x)\partial_{x_i}.
\end{aligned}
\end{equation*}
The operator $\mathcal{L}$ is the formal generator of the following stochastic differential equation (SDE) on the probability space $(\Omega,\mathcal F, \mathbb P)$,
\begin{equation}
\label{0nsde}
dX^x_t=b(X^x_t)dt +dL^{\alpha_1}_t, \quad X^x_0=x \in \mathbb{R}^n.
\end{equation}
We denote by $\{P_t\}_{t\ge0}$ the semigroup generated by $\mathcal{L}$ (or by $X^x$) on $\cC_0(\R^n)$.

Here we need to give a regularity assumption for the drift $b$ in \eqref{0nsde}, which is only valid for this section.

\noindent ($\bf{A'_{b}}$): Suppose that $ b\in \mathcal{C}^2_{b}(\mathbb{R}^{n})$, and the function $\hat{g}$ defined by
\begin{equation}
\label{0conditionb0}
\hat{g}(r):= \inf \left\{-\frac{\left\langle b(x_1)-b(x_2),x_1-x_2\right\rangle}{|x_1-x_2|^2}: x_1, x_2 \in \mathbb{R}^{n}, |x_1-x_2|=r  \right\}
\end{equation}
satisfies
\begin{equation*}
\liminf_{r\rightarrow \infty} \hat{g}(r)>0.
\end{equation*}


\begin{remark}
Condition \eqref{0conditionb0} is a weak version of the dissipative condition. To be precise, it is equivalent to that for any $M>0$ there exists a positive constant $R>0$ such that for
all $x_1, x_2 \in \mathbb{R}^n$  with $|x_1-x_2|\geq R$,
\begin{equation}\label{0diss}
\left \langle b(x_1)-b(x_2), x_1-x_2\right \rangle \leq -M |x_1-x_2|^2.
\end{equation}
As a consequence, by letting $x_2 = 0$ and using Young's inequality, we have
\begin{equation}\label{diss}
\begin{aligned}
\left \langle b(x), x\right \rangle &\leq |b(0)| |x| -M |x|^2 \leq -\frac{M}{2}|x|^2+\frac{1}{M}|b(0)|^2.
\end{aligned}
\end{equation}
\end{remark}

Under Hypothesis ($\bf{A'_{b}}$), SDE \eqref{0nsde} has a unique global strong solution $X^x$ \cite[Theorem 6.2.3]{Da}.
One expects that the solution of \eqref{poisson} has the following probabilistic representation, whose proof will be given in Subsection \ref{sec-3-2},
\begin{equation}\label{rep_u}
u(x)=\int^{\infty}_0 \mathbb{E} f(X^x_s)ds.
\end{equation}

\subsection{\bf{Invariance and ergodicity.}}

\begin{lemma}
Let ($\bf{A'_{b}}$) hold. Then for any $1 \leq p < \alpha_1$ and $T>0$, we have
\begin{align}
\mathbb{E}\left(\sup_{0 \leq t \leq T}|X^x_t|^p\right) & \leq C_{p} \left(T^{\frac{p}{\alpha_1}}\vee T^{1-\frac{2}{\alpha_1}+\frac{p}{\alpha_1}}\right)+|x|^p, \label{moment0} \\
\sup_{t \ge 0}\mathbb{E}|X^x_t|^p & \le C_p (1+|x|^p). \label{moment1}
\end{align}
\end{lemma}
\begin{proof}
We follow the lines of \cite[Lemma A.2]{XS1}. We first prove \eqref{moment0}. To this purpose, we define an auxiliary function for each $T>0$,
\begin{equation*}
U_T(z):=\left(|z|^2+T^\frac{2}{\alpha_1} \right)^{\frac{p}{2}}.
\end{equation*}
Then for any $z\in \mathbb{R}^n$, we get
\begin{equation}
\label{onederiva}
\left | \nabla U_T(z) \right| =\frac{p|z|}{(T^\frac{2}{\alpha_1}+|z|^2)^{1-p/2}} \leq C_p|z|^{p-1}.
\end{equation}

On the one hand, by the definition of $U_{T}(z)$, we know
\begin{equation*}
\begin{aligned}
\frac{\partial U_{T}(z)}{\partial z_i}&=pz_i\left(T^{\frac{2}{\alpha_1}}+z^2_1+z^2_2+\cdot\cdot\cdot+z^2_n\right)^{\frac{p}{2}-1}.
\end{aligned}
\end{equation*}
Thus we have
\begin{equation*}
\nabla U_{T}(z)=\frac{pz}{\left(T^{\frac{2}{\alpha_1}}+|z|^2\right)^{1-\frac{p}{2}}}.
\end{equation*}
Similarly, by a simple calculation, we have
\begin{eqnarray}
\label{z1}
\nabla\left[\frac{pz_1}{\left(T^{\frac{2}{\alpha_1}}+|z|^2\right)^{1-\frac{2}{p}}}\right] =
\begin{pmatrix}
  \frac{p}{\left(T^{\frac{2}{\alpha_1}}+|z|^2\right)^{1-\frac{p}{2}}}-\frac{p(2-p)z^2_1}{\left(T^{\frac{2}{\alpha_1}}+|z|^2\right)^{2-p/2}}     \\
   -\frac{p(2-p)z_1z_2}{\left(T^{\frac{2}{\alpha_1}}+|z|^2\right)^{2-p/2}}    \\
         \vdots  \\
	  -\frac{p(2-p)z_1z_{n-1}}{\left(T^{\frac{2}{\alpha_1}}+|z|^2\right)^{2-p/2}} \\
	-\frac{p(2-p)z_1z_{n}}{\left(T^{\frac{2}{\alpha_1}}+|z|^2\right)^{2-p/2}}
\end{pmatrix}
\end{eqnarray}
and
\begin{eqnarray}
\label{z2}
\nabla\left[\frac{pz_2}{\left(T^{\frac{2}{\alpha_1}}+|z|^2\right)^{1-\frac{2}{p}}}\right] =
\begin{pmatrix}
-\frac{p(2-p)z_1z_2}{\left(T^{\frac{2}{\alpha_1}}+|z|^2\right)^{2-p/2}}     \\
   \frac{p}{\left(T^{\frac{2}{\alpha_1}}+|z|^2\right)^{1-\frac{p}{2}}}-\frac{p(2-p)z^2_2}{\left(T^{\frac{2}{\alpha_1}}+|z|^2\right)^{2-p/2}}    \\
         \vdots  \\
	  -\frac{p(2-p)z_2z_{n-1}}{\left(T^{\frac{2}{\alpha_1}}+|z|^2\right)^{2-p/2}} \\	
-\frac{p(2-p)z_2z_n}{\left(T^{\frac{2}{\alpha_1}}+|z|^2\right)^{2-p/2}}
\end{pmatrix}
\end{eqnarray}
\begin{eqnarray*}
~~~~~~~~~~~~~~~~\vdots
\end{eqnarray*}
\begin{eqnarray}
\label{zn}
\nabla\left[\frac{pz_n}{\left(T^{\frac{2}{\alpha_1}}+|z|^2\right)^{1-\frac{2}{p}}}\right] =
\begin{pmatrix}
 -\frac{p(2-p)z_1z_n}{\left(T^{\frac{2}{\alpha_1}}+|z|^2\right)^{2-p/2}}     \\
   -\frac{p(2-p)z_2z_n}{\left(T^{\frac{2}{\alpha_1}}+|z|^2\right)^{2-p/2}}    \\
         \vdots  \\
	  -\frac{p(2-p)z_{n-1}z_{n}}{\left(T^{\frac{2}{\alpha_1}}+|z|^2\right)^{2-p/2}} \\	
\frac{p}{\left(T^{\frac{2}{\alpha_1}}+|z|^2\right)^{1-\frac{p}{2}}}-\frac{p(2-p)z^2_n}{\left(T^{\frac{2}{\alpha_1}}+|z|^2\right)^{2-p/2}}
\end{pmatrix}
\end{eqnarray}
Combining \eqref{z1}-\eqref{zn} and by the inequality $z_iz_j\leq |z|^2$, we obtain
\begin{equation}
\label{twderiva}
\begin{aligned}
\left|\nabla^2U_T(z)\right|&=\left|\frac{pI_{n\times n}}{(T^\frac{2}{\alpha_1}+|z|^2)^{1-p/2}}-\frac{p(2-p)z\otimes z}{(T^\frac{2}{\alpha_1}+|z|^2)^{2-p/2}}\right|\\
 &\leq \frac{C_{p,n}}{\left(T^\frac{2}{\alpha_1}+|z|^2\right)^{1-p/2}}\\
 &\leq C_{p,n} T^{\frac{2}{\alpha_1}(\frac{p}{2}-1)}.
\end{aligned}
\end{equation}
where $I_{n\times n}$ is the $n\times n$ identity matrix. \\
\\
\smallskip~~~~~ It is clear that $X^{x}_t$ can be rewritten as
\begin{equation*}
X^{x}_t=x+\int^{t}_{0}b(X^{x}_s)ds +\int^{t}_{0}\int_{|z|\leq T^{\frac{1}{\alpha_1}}}z \widetilde{N}^1(dz,ds) +\int^{t}_{0}\int_{|z|> T^{\frac{1}{\alpha_1}}}z {N}^1(dz,ds).
\end{equation*}
Using It\^o's formula, we gain
\begin{equation}\label{Uest}
\begin{aligned}
U_T(X^{x}_t)=&\ U_T(X^x_0)+\int^t_{0}\left\langle b(X^{x}_s), \nabla U_T(X^{x}_s)\right\rangle ds
+\int^t_{0}\int_{|z|\leq T^{\frac{1}{\alpha_1}}}\left[U_T(X^{x}_s+z)-U_T(X^{x}_s)\right]\widetilde{N}^{1}(ds,dz)\\
&\ +\int^t_{0}\int_{|z|\leq T^{\frac{1}{\alpha_1}}}\left[U_T(X^{x}_s+z)-U_T(X^{x}_s)-\langle \nabla U_T(X^{x}_s), z\rangle \right]\nu_{1}(dz)ds \\
&\ +\int^t_{0}\int_{|z| >T^{\frac{1}{\alpha_1}}}\left[U_T(X^{x}_s+z)-U_T(X^{x}_s)\right]N^{1}(ds,dz)\\
:=&\ U_T(X^x_0)+U^{(1)}+U^{(2)}+U^{(3)}+U^{(4)}.
\end{aligned}
\end{equation}
For the term $U^{(1)}$, we use \eqref{diss} and obtain
\begin{equation*}
\begin{aligned}
\mathbb{E}\left[\sup_{0 \leq t \leq T}|U^{(1)}|\right] &= \mathbb{E}\left[\sup_{0 \leq t \leq T}\int^t_{0}\left\langle b(X^{x}_s), \frac{pX^{x}_s}{(T^{\frac{2}{\alpha}}+|X^{x}_s|^2)^{1-\frac{p}{2}}}\right\rangle ds\right] \\
& \leq \int^{T}_{0}\frac{C(p, M, b(0))}{(T^{\frac{2}{\alpha_1}}+|X^{x}_s|^2)^{1-\frac{p}{2}}}ds
 \leq C(p, M, b(0)) T^{1-\frac{2}{\alpha_1}+\frac{p}{\alpha_1}}.
\end{aligned}
\end{equation*}
For the term $U^{(2)}$, by  the Burkholder--Davis--Gundy inequality, \eqref{onederiva} and Young's inequality, we get
\begin{equation*}
\begin{aligned}
\mathbb{E}\left[\sup_{0 \leq t \leq T}|U^{(2)}|\right]  &\leq  C \mathbb{E}\left[\int^{T}_{0}\int_{|z|\leq T^{\frac{1}{\alpha_1}}}\left(\int^{1}_{0}|\nabla U_T(X^{x}_s+z\xi)|^2d\xi\right)|z|^2N^{1}(dz,ds)\right]^{\frac{1}{2}}\\
&\leq C_{p} \mathbb{E}\left\{\int^{T}_{0}\int_{|z|\leq T^{\frac{1}{\alpha_1}}}\left[\int^{1}_{0}\left(|X^{x}_s|^{2p-2}
+|z|^{2p-2}\xi^{2p-2}\right)d\xi\right]z^2N^{1}(dz,ds)\right\}^{\frac{1}{2}}\\
& \leq C_{p}\mathbb{E}\left\{\int^{T}_{0}\int_{|z|\leq T^{\frac{1}{\alpha_1}}}\left(|X^{x}_s|^{2p-2}z^2+|z|^{2p}\right)N^{1}(dz,ds)\right\}^{\frac{1}{2}}\\
& \leq C_p \mathbb{E}\left\{\left(\sup_{0 \leq s \leq T}|X^{x}_t|^{p-1}\right)\left[\int^{T}_{0}\int_{|z|\leq T^{\frac{1}{\alpha_1}}}|z|^2N^{1}(dz,ds)\right]^{\frac{1}{2}}\right\}
+C_p \mathbb{E}\left[\int^{T}_{0}\int_{|z|\leq T^{\frac{1}{\alpha_1}}}|z|^{2p}N^{1}(dz,ds)\right]\\
& \leq \frac{1}{4} \mathbb{E}\left(\sup_{0 \leq s \leq T}|X^{x}_s|^p\right)+C_pT^{\frac{p}{\alpha_1}}.
\end{aligned}
\end{equation*}
For the term $U^{(3)}$, by Taylor's expansion and inequality \eqref{twderiva}, we have
\begin{equation*}
\begin{aligned}
\mathbb{E}\left[\sup_{0 \leq t \leq T}|U^{(3)}|\right]  & \leq C_p T^{\frac{2}{\alpha_1}(\frac{p}{2}-1)}\int^{T}_{0}\int_{|z|\leq T^{\frac{1}{\alpha_1}}}z^2\nu_{1}(dz)ds
 \leq C_p T^{\frac{p}{\alpha_1}}.
\end{aligned}
\end{equation*}
For the term $U^{(4)}$, using again \eqref{onederiva} and Young's inequality, we obtain
\begin{equation}
\label{U4}
\begin{aligned}
\mathbb{E}\left[\sup_{0 \leq t \leq T}|U^{(4)}|\right]&\leq  C \mathbb{E}\left[\int^{T}_{0}\int_{|z|> T^{\frac{1}{\alpha_1}}}\left(\int^{1}_{0}|\nabla U_T(X^{x}_s+z\theta)|d\theta\right)|z|\nu_1(dz)ds\right]\\
& \leq C_p\mathbb{E}\left[\int^{T}_{0}\int_{|z|> T^{\frac{1}{\alpha_1}}}\left(|X^{x}_s|^{p-1}|z|+|z|^{p-1}\right)\nu_1(dz)ds\right]\\
& \leq C_p \mathbb{E}\left[\left(\sup_{0 \leq s \leq T}|X^{x}_s|^{p-1}\right)\left(\int^{T}_{0}\int_{|z|> T^{\frac{1}{\alpha_1}}}|z|\nu_1(dz)ds\right)\right]+C_p \int^{T}_{0}\int_{|z|> T^{\frac{1}{\alpha_1}}}|z|^p\nu_1(dz)ds\\
&\leq \frac{1}{4}\mathbb{E}\left(\sup_{0 \leq s \leq T}|X^{x}_s|^{p-1}\right)
+C_p\left[\int^{T}_{0}\int_{|z|> T^{\frac{1}{\alpha_1}}}|z|\nu_1(dz)ds\right]^p
+C_p \int^{T}_{0}\int_{|z|> T^{\frac{1}{\alpha_1}}}|z|^p\nu_1(dz)ds\\
& \leq \frac{1}{4}\mathbb{E}\left(\sup_{0 \leq s \leq T}|X^{x}_s|^{p-1}\right)+C_pT^{\frac{p}{\alpha_1}}.
\end{aligned}
\end{equation}
Combining \eqref{Uest}--\eqref{U4}, we have the following estimate which yields \eqref{moment0},
\begin{equation*}
\begin{aligned}
\mathbb{E}\left[\sup_{0 \leq t \leq T}U_T(X^x_t)\right]\leq \frac{1}{2} \mathbb{E}\left(\sup_{0 \leq t \leq T}|X^x_t|^p\right)+C_p \left(T^{\frac{p}{\alpha_1}}\vee T^{1-\frac{2}{\alpha_1}+\frac{p}{\alpha_1}}\right)+\mathbb E (|X^x_0|^p).
\end{aligned}
\end{equation*}

To prove \eqref{moment1}, we need to use the special case $T=1$ of $U_T$, i.e., $U_1$. Then for any $z\in \mathbb{R}^n$, we obtain from \eqref{onederiva} and \eqref{twderiva} that
\begin{equation}\label{onederiva0}
\left | \nabla U_1(z) \right| \leq C_p|z|^{p-1}, \quad \left|\nabla^2U_1(z)\right|
 \leq C_p.
\end{equation}
Using It\^o's formula to $U_1(X^{x}_t)$ and then taking expectation, we obtain
\begin{equation}\label{est-6}
\begin{aligned}
\frac{d\mathbb{E}U_1(X^{x}_t)}{dt}&=\mathbb{E}\left\langle b(X^{x}_s), \nabla U_1(X^{x}_s)\right\rangle+\mathbb{E}\int_{|z|\leq 1}\left[U_1(X^{x}_s+z)-U_1(X^{x}_s)-\langle \nabla U_1(X^{x}_s), z\rangle \right]\nu_{1}(dz)\\
&\quad\ +\mathbb{E}\int_{|z| >1}\left[U_1(X^{x}_s+z)-U_1(X^{x}_s)\right]\nu_{1}(dz)\\
&:=M_1+M_2+M_3.
\end{aligned}
\end{equation}
For the term $M_1$, by \eqref{onederiva} and \eqref{0diss}, there exists a positive constant $\kappa>0$ such that
\begin{equation*}
\begin{aligned}
M_1 &= \mathbb{E}\left[\left\langle b(X^{x}_s)-b(0), \frac{pX^{x}_s}{(1+|X^{x}_s|^2)^{1-\frac{p}{2}}}\right\rangle \right]+ \mathbb{E}\left[\left\langle b(0), \frac{pX^{x}_s}{(1+|X^{x}_s|^2)^{1-\frac{p}{2}}}\right\rangle \right]\\
 &\leq \mathbb{E}\left[ \frac{-pM|X^{x}_s|^2+C(p,|b(0)|)|X^{x}_s|}{\left(1+|X^{x}_s|^2\right)^{1-\frac{p}{2}}}\right]\\
 &\leq -\kappa \mathbb{E}U_1(X^{x}_s)+C(p, |b(0)|).
\end{aligned}
\end{equation*}
For the term $M_2$, by \eqref{onederiva0}, we get
\begin{equation*}
\begin{aligned}
|M_2|\leq  C_{p} \mathbb{E}\left[\int_{|z|\leq 1}|z|^2\nu_{1}(dz)\right] \leq  C_{p}.
\end{aligned}
\end{equation*}
For the term $M_3$, using again \eqref{onederiva0} and Young's inequality, we obtain
\begin{equation}
\label{U40}
\begin{aligned}
|M_3|&\leq \mathbb{E}\left[\int_{|z|>1}\left(\int^{1}_{0}|\nabla U_1(X^{x}_s+z\theta)|d\theta\right)|z|\nu_1(dz)\right]\\
& \leq \mathbb{E}\left[\int_{|z|> 1}\left(|X^{x}_s|^{p-1}+|z|^{p-1}\right)\nu_1(dz)\right]\\
& \leq \frac{\kappa}{2} \mathbb{E}U_1(X^{x}_s)+C_p.
\end{aligned}
\end{equation}
Combining \eqref{est-6}--\eqref{U40}, we have
\begin{equation*}
\begin{aligned}
\frac{d\mathbb{E}U_1(X^x_t)}{dt}\leq -\frac{\kappa}{2} \mathbb{E}U_1(X^{x}_t)+C_p.
\end{aligned}
\end{equation*}
By the comparison theorem, we have
\begin{equation*}
\begin{aligned}
\mathbb{E}U_1(X^x_t)& \leq  e^{\frac{-\kappa t}{2}}\left(1+|x|^2\right)^{\frac{p}{2}}
+C_{p, \kappa}\int^{t}_{0}e^{\frac{-(t-s)\kappa}{2}}ds \\
& \leq C_{p}\left(1+|x|^p\right).
\end{aligned}
\end{equation*}
This yields the desired result.
\end{proof}

In the following lemma, we will study the invariance and ergodicity of SDE \eqref{0nsde}.

\begin{lemma}\label{ergodicity0}
 Let ($\bf{A'_{b}}$) hold. 
 Then \\
 (i) the process $X$ possesses an invariant distribution $\mu$ on $\R^n$ which satisfies $\int_{\R^n} |z|^p \mu(dz) <\infty$ for any $1 \leq p < \alpha_1$; \\
 (ii) there exist a constant $\rho>0$, such that for each $x\in\R^n$, there exist positive constants $C_{x}$, we have
  \begin{equation}
  \label{est-1}
    \left| P_t f(x)-\int_{\R^n} f(y)\mu(dy) \right| \le C_{x}\|f\|_0e^{-\rho t}, \quad \forall f \in \mathcal{B}_{b}(\mathbb{R}^n),\ \forall t\ge0.
  \end{equation}
\end{lemma}

\begin{proof}
  It follows from Lemma \ref{moment0} that the semigroup $\{P_t\}_{t\ge0}$ preserves a finite first moment, i.e., if a measure $\eta$ has a finite first moment, then the measure $P^*_t \eta$ also has a finite first moment for all $t\ge0$. The existence of invariant distribution $\mu$ follows from \cite[Corollary 1.8]{Maj}. The moment estimate for $\mu$ is implied by the estimate \eqref{moment1}. Also by the same corollary, for all $r>0$, there exists a concave function $\phi:[0,\infty) \to [0,\infty)$ with $\phi(0)=0$ and $\phi(r)>0$, and two positive constants $C$ and $\rho$, such that for all nonnegative $t$ and any probability measure $\eta$, we have
  \begin{equation*}
    \|\mu-P^*_t \eta\|_{\text{TV}} \le C e^{-\rho t} W_\phi (\mu,\eta),
  \end{equation*}
  where $\{P^*_t\}_{t\ge0}$ is the dual semigroup of $\{P_t\}_{t\ge0}$, $W_\phi$ is the $p$-Wasserstein distance associated with the cost function $\phi$ (see the following remark for its definition).

  Now we fix an $x\in\R^n$ and take $\eta = \delta_x$. Then we have
  \begin{equation*}
    W_\phi (\mu,\eta) = W_\phi (\mu,\delta_x) = \left( \int_{\R^n} \phi(|x-y|)^p \mu(dy) \right)^{1/p}.
  \end{equation*}
  Hence,
  \begin{equation*}
    \begin{split}
      \left| P_t f(x)-\int_{\R^n} f(y)\mu(dy) \right| &\le \sup_{x \in \mathbb{R}^n}|f(x)| \cdot \|P_t(x,\cdot)-\mu\|_{\text{TV}} = \|f\|_0 \cdot \|P_t^* \delta_x -\mu\|_{\text{TV}} \\
      &\le C e^{-\rho t} W_\phi (\mu,\delta_x) = C_{x}e^{-\rho t}.
    \end{split}
  \end{equation*}
  The results follow.
\end{proof}


\begin{remark}
The $p$-Wasserstein distance $W_\phi$ is defined by
  \begin{equation*}
    W_\phi (\mu,\eta) = \left( \inf_{\pi\in\Pi(\mu,\eta)} \int_{\R^n\times\R^n} \phi(|x-y|)^p \pi(dx,dy) \right)^{1/p},
  \end{equation*}
  where $\Pi(\mu,\eta)$ denotes the collection of all measures on $\R^n\times\R^n$ with marginals $\mu$ and $\eta$ respectively.

\end{remark}

\subsection{\bf{Existence.}}\label{sec-3-2}

Now, we are in the position to show the existence of the solution of nonlocal Poisson equation \eqref{poisson}.
\begin{lemma}\label{wellposed-Poisson}
 Let Hypothesis ($\bf{A'_{b}}$) hold. Assume $f\in\cC_0^\mu(\R^n)$. Then the function $u$ given by \eqref{rep_u} is a solution to the equation \eqref{poisson} in $\cC_0^\mu(\R^n)$.
\end{lemma}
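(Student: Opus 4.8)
The plan is to verify the two assertions separately: that $u(x)=\int_0^\infty \mathbb{E}f(X_s)\,ds$ is well-defined and lies in $\cC_0^\mu(\R^n)$, and that it solves $\mathcal{L}u=-f$. For the first part I would start from the exponential ergodicity estimate \eqref{est-1} of Lemma \ref{ergodicity0}, applied to $Q=f$: since $f\in\cC_0^\mu(\R^n)$ is bounded and centered, $\int_{\R^n}f(y)\mu(dy)=0$, so $|\mathbb{E}f(X_s)|=|P_sf(x)|\le C_x\|f\|_0 e^{-\rho s}$. Integrating in $s$ over $[0,\infty)$ gives absolute convergence of the integral and the bound $|u(x)|\le C_x\|f\|_0/\rho$, so $u$ is finite pointwise. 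Continuity of $u$ would follow from continuity of $x\mapsto P_sf(x)$ (Feller property of the semigroup of \eqref{0nsde}) together with the dominated convergence theorem, using the exponential decay as the integrable dominating bound — here one has to be slightly careful that the constant $C_x$ in \eqref{est-1} can be taken locally bounded in $x$, which is where the moment estimate of Lemma \ref{moment} and the explicit form $C_x\sim W_\phi(\mu,\delta_x)$ enter. That $u$ is centered, $\int u\,d\mu=0$, should come from Fubini and invariance: $\int_{\R^n}P_sf\,d\mu=\int_{\R^n}f\,d\mu=0$ for every $s$.

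For the second part, that $u$ solves the Poisson equation, the natural route is the semigroup/Dynkin identity. Writing $P_t u(x)-u(x)=\int_0^\infty (P_{t+s}f(x)-P_s f(x))\,ds = -\int_0^t P_s f(x)\,ds$ (a change of variables in the first integral after splitting), one divides by $t$ and lets $t\downarrow 0$; the right-hand side tends to $-f(x)$ by continuity of $s\mapsto P_sf(x)$ at $s=0$, and the left-hand side is by definition $\mathcal{L}u(x)$ provided $u$ is in the domain of the generator. So the crux is to show $u\in\mathrm{Dom}(\mathcal{L})$ and to justify the manipulation of the improper integral. I would establish that $u$ is a strong solution (e.g. $u\in C^{\alpha_1+\delta}$ or at least in a space on which the fractional Laplacian $(-\Delta)^{\alpha_1/2}$ acts classically) by invoking interior Schauder-type regularity for the nonlocal operator $\mathcal{L}$ applied to the bounded right-hand side $-f$, or alternatively by a direct computation showing $t^{-1}(P_tu-u)$ converges uniformly on compacts.

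The main obstacle I expect is the regularity/domain issue: the operator $\mathcal{L}$ is nonlocal, so there is no purely local elliptic theory to lean on, and one must ensure $(-\Delta)^{\alpha_1/2}u$ makes sense and that the probabilistic identity $P_tu-u=\int_0^t P_s\mathcal{L}u\,ds$ genuinely holds for this particular $u$. A clean way around it is to verify the martingale/distributional formulation — show that for every test function $\varphi\in C_0^\infty(\R^n)$ one has $\int u\,\mathcal{L}^*\varphi\,dx = -\int f\,\varphi\,dx$ using Fubini and the identity $\frac{d}{ds}\mathbb{E}\varphi(X_s)=\mathbb{E}\mathcal{L}\varphi(X_s)$ — and then upgrade to a pointwise solution using the established continuity of $u$ and bootstrapping regularity of the nonlocal equation; since the paper has placed this lemma in $\cC_0^\mu$ rather than a Hölder space, I suspect the intended argument works at the level of mild/semigroup solutions and reads off $\mathcal{L}u=-f$ directly from the limit $\lim_{t\to0}t^{-1}(P_tu-u)=-f$, with uniform convergence giving membership in the domain and continuity of $\mathcal{L}u$.
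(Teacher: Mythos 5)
Your proposal is correct and follows essentially the same semigroup-theoretic route as the paper: absolute convergence of $\int_0^\infty P_sf(x)\,ds$ from the exponential ergodicity estimate \eqref{est-1} plus centeredness of $f$, the identity $\mathcal{L}u=-f$ from classical semigroup calculus, and centeredness of $u$ from Fubini and invariance of $\mu$. The only (minor) variation is that you obtain $\mathcal{L}u=-f$ from the difference quotient $t^{-1}(P_tu-u)=-t^{-1}\int_0^tP_sf\,ds$ as $t\downarrow 0$, whereas the paper applies $\mathcal{L}$ to the truncated integral $\int_0^tP_sf\,ds=P_tf-f$ and lets $t\to\infty$; these are two faces of the same generator identity, and your added worries about the pointwise meaning of $(-\Delta)^{\alpha_1/2}u$ are resolved exactly as you suspect, by working with the abstract generator of the Feller semigroup rather than the classical nonlocal operator.
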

\begin{proof}
 We divide the proof into the following three steps.\\
{ \bf{Step 1.}} We firstly show that the right hand side of \eqref{rep_u} makes sense. Using directly the estimate \eqref{est-1} and the fact that $f$ is centered with respect to $\mu$, we have
  \begin{equation*}
    \left| \int_0^\infty \mathbb{E} f(X^x_s)ds \right| = \left| \int_0^\infty P_s f(x) ds \right| \le C_x \|f\|_0 \int_0^\infty e^{-\rho s} ds <\infty.
  \end{equation*}
  Obviously the semigroup $\{P_t\}_{t\ge0}$ is a Feller semigroup on $(\cC_0(\R^n),\|\cdot\|_0)$ \cite[Theorem 6.7.4]{Da}. The classical theory of semigroups \cite[Lemma II.1.3]{EN000} yields
  \begin{equation*}
    \L \int_0^t P_s f ds = P_t f - f.
  \end{equation*}
  { \bf{Step 2.}} Fix an $x\in\R^n$. Since $f\in\cC_0^\mu(\R^n)$, the estimate \eqref{est-1} implies that $P_t f(x)$ converges uniformly in $t$ to $0$, as $t\to\infty$. Hence, a straightforward interchange of limits yields
  \begin{equation*}
    \begin{split}
      \L u(x) &= \L \left(\lim_{t\to\infty}\int_0^t P_s f(x) ds\right) = \lim_{t\to\infty} \L \int_0^t P_s f(x) ds
      = \lim_{t\to\infty} P_t f(x) - f(x) = -f(x).
    \end{split}
  \end{equation*}
  This shows that the function $u$ defined in \eqref{rep_u} is a solution of \eqref{poisson}.\\
{ \bf{Step 3.}} We prove that $u$ is also centered with respect to $\mu$. By Fubini's theorem, we have
  \begin{equation*}
    \begin{split}
      \int_{\R^n} u(x) \mu(dx) &= \int_{\R^n} \int_0^\infty P_s f(x) ds \mu(dx) = \int_0^\infty \int_{\R^n} P_s f(x) \mu(dx)ds
      = \int_0^\infty \int_{\R^n} f(x) \mu(dx)ds = 0.
    \end{split}
  \end{equation*}
  The proof is complete now.
\end{proof}

\begin{remark}
  One can also use the Fredholm alternative to obtain the existence of solution of \eqref{poisson}, cf. \cite[Proposition 4.12]{Huang}.
\end{remark}

\section{Weak convergence in the averaging principle.}\label{sec-4}

Now we are going to apply the technique of nonlocal Poisson equations to study the weak averaging principle of the multiscale stochastic system \eqref{slo0}.

Introduce the following \emph{frozen equation} associated to the fast component:
\begin{equation}
\label{itro}
dX^{x,y}_t=b(X^{x,y}_t,y)dt +dL^{\alpha_1}_{t}, ~~X^{x,y}_0=x \in \mathbb{R}^n.
\end{equation}
From the discussion at the beginning of Section \ref{sec-3}, we see that the equation \eqref{itro} has a unique strong solution $X^{x,y}$ for each frozen $y \in \mathbb{R}^m$. Moreover, Lemma \ref{ergodicity0} ensures that the solution process $X^{x,y}$ possesses an invariant distribution $\mu^y$ on $\R^n$.

Motivated by Section \ref{sec-3},  we consider the following nonlocal Poisson equation
\begin{equation}
\label{elliptic}
\mathcal{L}_1\widetilde{{G}} (x,y)=-G (x,y),
\end{equation}
where
\begin{equation*}
\mathcal{L}_1\widetilde{{G}}(x,y)=-(-\Delta_x)^{\frac{\alpha_1}{2}}\widetilde{{G}}(x,y)+\left\langle b(x,y), \nabla_{x}  \widetilde{{G}}(x,y)\right\rangle.
\end{equation*}
Then Lemma \ref{wellposed-Poisson} yields that the nonlocal Poisson equation \eqref{elliptic} has a  solution $\widetilde{{G}}$, which is centered with respect to $\mu^y$ and given by
\begin{equation*}
\widetilde{{G}} (x,y)=\int^{\infty}_{0}\mathbb{E}G (X^{x,y}_t, y)dt.
\end{equation*}

\subsection{\bf{A priori estimates for fast  component.} }

In the following lemma, we will give some a priori estimates for the solution $X^{x,y}$ of \eqref{itro}. The proof is left in Appendix \ref{app-1}.

\begin{lemma}\label{lemma-3}
Under Hypothesis ($\bf{A_{b}}$),  for all $t\geq 0$, and $x_i \in \mathbb{R}^n$, $y_i \in \mathbb{R}^m$, $i=1,2$, we have
\begin{align}
|X^{x_1,y_1}_t-X^{x_2,y_2}_t|^2 &\leq e^{-\frac{\gamma}{2}t}|x_1-x_2|^2+C(\|b\|_1,\gamma)|y_1-y_2|^2, \label{xde} \\
|\nabla_{y}X^{x_1,y_1}_t-\nabla_{y}X^{x_2,y_2}_t|^2 &\leq C(\|b\|_2,\gamma) t e^{-\frac{\gamma}{2}t}|x_1-x_2|^2+C(\|b\|_2,\gamma)|y_1-y_2|^2, \label{xde1} \\
|\nabla_{x}X^{x_1,y_1}_t-\nabla_{x}X^{x_2,y_2}_t|^2 &\leq C(\|b\|_2,\gamma) te^{-\frac{\gamma t}{2}} \left(|y_1-y_2|^2+|x_1-x_2|^2\right), \label{xde2}
\end{align}
where $C(\|b\|_1,\gamma)$ is a constant independent of $t$.
\end{lemma}

Now, we give the exponential ergodicity for the equation \eqref{itro}.

\begin{proposition}
\label{ergodic11}
Under Hypothesis ($\bf{A_{b}}$), for each function $\widetilde{\varphi} \in C^{1}_{b}$, there exists a positive constant $C$ such that for all $t \geq 0$ and $x \in \mathbb{R}^n$, we have
\begin{equation*}
\sup_{y \in \mathbb{R}^{m}}|P^{y}_t\widetilde{\varphi}(x)-\mu^{y}(\widetilde{\varphi})|\leq C\|\widetilde{\varphi}\|_{1}e^{-\frac{\gamma t}{8}}(1+|x|^{\frac{1}{2}}),
\end{equation*}
where
\begin{equation*}
P^{y}_{t}\widetilde{\varphi}(x)=\mathbb{E}\widetilde{\varphi}(X^{x,y}_t).
\end{equation*}
\end{proposition}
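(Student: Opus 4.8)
The plan is to derive the exponential ergodicity of \eqref{itro} from the pathwise contraction estimate of Lemma \ref{xde} combined with the moment bound of Lemma \ref{moment}, following the classical scheme: first control the difference of the semigroup evaluated at two points, then pass to the invariant measure by integration. Concretely, for a fixed $y\in\R^m$ and $x_1,x_2\in\R^n$, couple $X^{x_1,y}_t$ and $X^{x_2,y}_t$ using the \emph{same} driving noise $L^{\alpha_1}$. Then, writing $\widetilde\varphi\in C^1_b$ and using $|\widetilde\varphi(a)-\widetilde\varphi(b)|\le\|\widetilde\varphi\|_1|a-b|$ together with the concavity inequality $|a-b|\le|a-b|^{1/2}(|a-b|^{1/2})\le\ldots$ more carefully $\E|X^{x_1,y}_t-X^{x_2,y}_t| \le (\E|X^{x_1,y}_t-X^{x_2,y}_t|^2)^{1/2}$, Lemma \ref{xde} with $y_1=y_2=y$ gives
\begin{equation*}
|P^y_t\widetilde\varphi(x_1)-P^y_t\widetilde\varphi(x_2)| \le \|\widetilde\varphi\|_1\,\E|X^{x_1,y}_t-X^{x_2,y}_t| \le \|\widetilde\varphi\|_1\, e^{-\frac{\gamma}{4}t}|x_1-x_2|.
\end{equation*}

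Next I would integrate this over $x_2$ against the invariant measure $\mu^y$: since $\mu^y(\widetilde\varphi)=\int P^y_t\widetilde\varphi(x_2)\,\mu^y(dx_2)$, we get
\begin{equation*}
|P^y_t\widetilde\varphi(x)-\mu^y(\widetilde\varphi)| \le \|\widetilde\varphi\|_1\, e^{-\frac{\gamma}{4}t}\int_{\R^n}|x-x_2|\,\mu^y(dx_2) \le \|\widetilde\varphi\|_1\, e^{-\frac{\gamma}{4}t}\left(|x| + \int_{\R^n}|x_2|\,\mu^y(dx_2)\right).
\end{equation*}
So the remaining task is to bound $\int|x_2|\,\mu^y(dx_2)$ \emph{uniformly in} $y$. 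This I would obtain from Lemma \ref{moment} (or rather its $(x,y)$-dependent analogue): since $\mu^y$ is the invariant law of $X^{x,y}$, one has $\int|x_2|\,\mu^y(dx_2)=\lim_{t\to\infty}\E|X^{x,y}_t|$, and the first-moment estimate $\E|X^{x,y}_t|\le C\,T^{1/\alpha_1}+|x|$ type bound from Lemma \ref{moment}, applied on unit-length time intervals and combined with the dissipativity of $b(\cdot,y)$ outside a ball (uniform in $y$ by ($\bf A_b$) and $\sup_y|b(0,y)|<\infty$), yields $\sup_y\int|x_2|\,\mu^y(dx_2)\le C<\infty$ for some constant $C$ independent of $y$. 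Substituting this back gives $|P^y_t\widetilde\varphi(x)-\mu^y(\widetilde\varphi)|\le C\|\widetilde\varphi\|_1 e^{-\frac{\gamma}{4}t}(1+|x|)$, which is even slightly stronger than the claimed $(1+|x|^{1/2})$; the stated form follows a fortiori, or one uses $\E|X^{x_1,y}_t-X^{x_2,y}_t|\le (\E|X^{x_1,y}_t-X^{x_2,y}_t|^2)^{1/2}\le e^{-\frac{\gamma}{4}t}|x_1-x_2|$ together with a crude bound $|x_1-x_2|\le C(1+|x_1|^{1/2})(1+|x_2|^{1/2})$ when one only wants growth $|x|^{1/2}$, absorbing the $\mu^y$-moment into the constant.

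The main obstacle I anticipate is the \textbf{uniformity in $y$} of the moment bound on $\mu^y$. Lemma \ref{moment} as stated is for the single-scale equation \eqref{0nsde}; here one needs the frozen equation \eqref{itro} and a constant that does not blow up as $|y|\to\infty$. This is exactly where Hypothesis ($\bf A_b$) is essential: the one-sided Lipschitz constant $-\gamma$ is uniform in $y$, and $\sup_y|b(0,y)|<\infty$, so the Lyapunov/dissipativity argument producing the moment estimate carries through with $y$-independent constants. I would therefore first record a short lemma (or invoke the already-proven Lemma \ref{xde} at $x_1=x$, $x_2$ a fixed reference point, combined with a drift estimate $\frac{d}{dt}\E|X^{x,y}_t|^2\le -\gamma\E|X^{x,y}_t|^2+C$) establishing $\sup_{t\ge0,\,y}\E|X^{x,y}_t|^2\le C(1+|x|^2)$, hence $\sup_y\mu^y(|\cdot|)<\infty$, and only then assemble the three displays above. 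Everything else — the coupling, the Lipschitz bound on $\widetilde\varphi$, the Cauchy–Schwarz step, Fubini for the $\mu^y$-integration — is routine.
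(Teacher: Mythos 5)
Your overall scheme (same-noise coupling, a Lipschitz bound on $\widetilde\varphi$, then integrating $P^y_t\widetilde\varphi(x)-P^y_t\widetilde\varphi(z)$ against $\mu^y$ via invariance) is exactly the paper's, but there is a genuine gap in how you recover the stated growth $(1+|x|^{1/2})$. Your main chain of estimates produces $C\|\widetilde\varphi\|_1 e^{-\frac{\gamma}{4}t}(1+|x|)$, and you assert this is ``slightly stronger'' than the claim so that the stated form follows a fortiori. This is backwards: $1+|x|\ge 1+|x|^{1/2}$, so a bound growing like $|x|$ is strictly \emph{weaker} than one growing like $|x|^{1/2}$, and the half-power growth is precisely what is needed downstream (in Lemma \ref{ps} and the tightness argument) because $X^{\varepsilon}$ only has finite moments of order $p<\alpha_1<2$. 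Your fallback inequality $|x_1-x_2|\le C(1+|x_1|^{1/2})(1+|x_2|^{1/2})$ is false (take $x_2=0$ and let $|x_1|\to\infty$). The missing ingredient is to interpolate between the sup-norm and Lipschitz bounds on $\widetilde\varphi$: since $|\widetilde\varphi(a)-\widetilde\varphi(b)|\le 2\|\widetilde\varphi\|_1\min(1,|a-b|)\le 2\|\widetilde\varphi\|_1|a-b|^{1/2}$, Jensen and Lemma \ref{xde} give $\E|X^{x,y}_t-X^{z,y}_t|^{1/2}\le\left(\E|X^{x,y}_t-X^{z,y}_t|^2\right)^{1/4}\le e^{-\frac{\gamma t}{8}}|x-z|^{1/2}$, so that only the half-moment $\int|x-z|^{1/2}\mu^y(dz)\le 1+|x|^{1/2}+\int|z|^{1/2}\mu^y(dz)$ enters. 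This is how the paper obtains $(1+|x|^{1/2})$, at the price of the rate $e^{-\frac{\gamma t}{8}}$ (the $e^{-\frac{\gamma t}{4}}$ in the statement is not what the paper's own computation yields).

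A second problem is your proposed auxiliary estimate $\sup_{t\ge0,\,y}\E|X^{x,y}_t|^2\le C(1+|x|^2)$: the second moment of $X^{x,y}_t$ is infinite for $\alpha_1$-stable noise with $\alpha_1<2$, so the Gr\"onwall argument on $\E|X^{x,y}_t|^2$ cannot even be started. You must work with $p$-th moments for $p<\alpha_1$ (as in the truncated Lyapunov function of Lemma \ref{moment}) to obtain the uniform-in-$y$ moment bound on $\mu^y$; the half-moment $\sup_y\int|z|^{1/2}\mu^y(dz)<\infty$ suffices once the interpolation above is in place. With these two repairs your argument coincides with the paper's proof.
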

\begin{proof}
By the definition of invariant measure, \eqref{xde},  \eqref{moment0} and Lemma \ref{ergodicity0}-(i), and H\"older inequality, we have
\begin{equation*}
\begin{aligned}
\left|\mathbb{E}\widetilde{\varphi}(X^{x,y}_t)-\mu^{y}(\widetilde{\varphi})\right|
&=\left|\mathbb{E}\widetilde{\varphi}(X^{x,y}_t)-\int_{\mathbb{R}^{n}}\widetilde{\varphi}(z)\mu^{y}(dz)\right|
\leq \left|\int_{\mathbb{R}^{n}} \left[\mathbb{E}\widetilde{\varphi}(X^{x,y}_t)-\mathbb{E}\widetilde{\varphi}(X^{z,y}_t)\right]\mu^{y}(dz)\right|\\
 &\leq 2 \|\widetilde{\varphi}\|_1 \int_{\mathbb{R}^{n}} \mathbb{E}|X^{x,y}_t-X^{z,y}_t|^{\frac{1}{2}}\mu^{y}(dz)
\leq 2\|\widetilde{\varphi}\|_{1} e^{-\frac{\gamma t}{8}}\int_{\mathbb{R}^{n}}|x-z|^{\frac{1}{2}}\mu^{y}(dz)\\
& \leq 2\|\widetilde{\varphi}\|_{1} e^{-\frac{\gamma t}{8}} \left[\int_{\mathbb{R}^{n}}|x-z| \mu^{y}(dz) \right]^{\frac{1}{2}}
 \leq 2\|\widetilde{\varphi}\|_{1} e^{-\frac{\gamma t}{8}} \left[\int_{\mathbb{R}^{n}} (|x|+|z|) \mu^{y}(dz) \right]^{\frac{1}{2}}\\
& \leq C\|\widetilde{\varphi}\|_{1}e^{-\frac{\gamma t}{8}} (1+|x|^{\frac{1}{2}}).
\end{aligned}
\end{equation*}
The proof is complete.
\end{proof}

The proof of the following lemma can be found in Appendix \ref{app-1}.

\begin{lemma}\label{ps}
Under Hypotheses ($\bf{A_{b}}$),  ($\bf{A_{G1}}$) and ($\bf{A_{G2}}$), there exists a positive constant $C$ such that
\begin{align}
\sup_{y \in \mathbb{R}^m}|\widetilde{{G}}(x,y)|\leq C(1+|x|^{\frac{1}{2}}), \label{eqn-2} \\
\sup_{x\in\R^n,y\in \R^m}|\nabla_x \widetilde{{G}}(x,y)|\leq C, \label{eqn-3} \\
\sup_{y \in \mathbb{R}^m}|\nabla_y \widetilde{{G}}(x,y)|\leq C\left(1+|x|^{\frac{1}{2}}\right), \label{eqn-4} \\
\sup_{y \in \mathbb{R}^m}|\nabla^2_y \widetilde{{G}}(x,y)|\leq C(1+|x|). \label{eqn-5}
\end{align}
\end{lemma}

We can also deduce the following moment property for fast component $X^{\varepsilon}_{t}$.
\begin{lemma}
\label{corollary1}
Let ($\bf{A_{b}}$)  hold. Then for each $1 \leq p < \alpha_1$, we have
\begin{equation*}
\mathbb{E}\left(\sup_{0 \leq t \leq T}\left|X^{\varepsilon}_{t}\right|^p\right)\leq C_{p} \left( T^{\frac{p}{\alpha_1}}\vee T^{1-\frac{2}{\alpha_1}+\frac{p}{\alpha_1}}\right) \varepsilon^{-\frac{2p}{\alpha_1}} +|x|^p.
\end{equation*}
This implies that for any $r_1 >\frac{2p}{\alpha_1}$,
\begin{equation*}
\varepsilon^{r_1} \mathbb{E}\left(\sup_{0 \leq t \leq T}\left|X^{\varepsilon}_{t}\right|^p\right)\rightarrow 0, \quad \text{as } \varepsilon \rightarrow 0.
\end{equation*}
\end{lemma}
\begin{proof}
Note that for each $\varepsilon >0$, we have
\begin{equation*}
\begin{aligned}
X^{\varepsilon}_{t\varepsilon^2} &=x+\frac{1}{\varepsilon^2}\int_{0}^{t\varepsilon^2}b({X^{\varepsilon}_s}, Y^{\varepsilon}_s)ds + \frac{1}{\varepsilon^{\frac{2}{\alpha_1}}}L_{t\varepsilon^2}^{{\alpha_1}}
=x+\int^{t}_{0}b({X^{\varepsilon}_{s\varepsilon^2}}, Y^{\varepsilon}_{s\varepsilon^2})ds+\widetilde{L}^{\alpha_1}_{t},
\end{aligned}
\end{equation*}
where $\{\widetilde{L}^{\alpha_1}_{t}=\varepsilon^{-\frac{2}{\alpha_1}}L_{t\varepsilon^2}^{{\alpha_1}}, t \geq 0\}$ is an $\alpha$-stable process with the same law as $L^{\alpha_1}_t$.
Using the condition $\sup_{y \in \mathbb{R}^m} |b(0,y)|< \infty$ and the same argument as Lemma \ref{moment0}, we can obtain that
\begin{equation*}
\mathbb{E}\left(\sup_{0 \leq t \leq T} |X^{\varepsilon}_{t\varepsilon^2} |^{p}\right) \leq C_{p}\left(T^{\frac{p}{\alpha_1}}\vee T^{1-\frac{2}{\alpha_1}+\frac{p}{\alpha_1}}\right) +|x|^p.
\end{equation*}
Therefore,
\begin{equation*}
  \begin{split}
    \mathbb{E}\left(\sup_{t \in[0,T]}|X^{\varepsilon}_t|^p\right) &= \mathbb{E}\left(\sup_{0 \leq t \leq T/{\varepsilon^2}}|X^{\varepsilon}_{t\varepsilon^2} |^{p}\right)\leq C_{p} \left( \left(\frac{T}{\varepsilon^2}\right)^{\frac{p}{\alpha_1}}\vee \left(\frac{T}{\varepsilon^2}\right)^{1-\frac{2}{\alpha_1}+\frac{p}{\alpha_1}}\right) +|x|^p \\
    &\le C_{p} \left( T^{\frac{p}{\alpha_1}}\vee T^{1-\frac{2}{\alpha_1}+\frac{p}{\alpha_1}}\right) \varepsilon^{-\frac{2p}{\alpha_1}} +|x|^p.
  \end{split}
\end{equation*}
The results follow.
\end{proof}

\subsection{{\bf{Tightness of the slow component}}.} \label{4.2}
In this and next subsections, we will force all hypotheses given in Section \ref{sec-2}, i.e., ($\bf{A_{b}}$), ($\bf{A_{F}}$), ($\bf{A_{G1}}$) and ($\bf{A_{G2}}$), to hold. We will use the solution of the nonlocal Poisson equation as a corrector, to show that the slow component of the original system weakly converges to the effective low dimensional system as the scale parameter tends to zero.

Given a function $f_1:\R^m\to\R$. Define
\begin{equation}
\label{fepsilon0}
f^{\varepsilon}(x,y)=f_1(y)+\varepsilon^{2-r_0} u(x,y),
\end{equation}
where we refer to $\varepsilon^{2-r_0} u$ as a \emph{corrector} to $f_1$, the function $u$ is the solution of following nonlocal Poisson equation
\begin{equation}\label{eqn-1}
\mathcal{L}_1u(x,y)=-\left\langle \nabla_y f_1(y), G(x,y)\right\rangle.
\end{equation}
Lemma \ref{wellposed-Poisson} tell us that
\begin{equation}
\label{u}
u(x,y)=\left\langle \nabla_y f_1(y),\widetilde{ {G}}(x,y)\right\rangle.
\end{equation}

By applying It\^o's formula, we have
\begin{equation}
\label{f0}
\begin{aligned}
&\ f^{\varepsilon}(X^{\varepsilon}_t, Y^{\varepsilon}_t )-f^{\varepsilon}(x, y) \\
=&\ f_1(Y^{\varepsilon}_t)-f_1(y)+\varepsilon^{2-r_0} u(X^{\varepsilon}_t, Y^{\varepsilon}_t)-\varepsilon^{2-r_0} u(x,y)\\
=&\ \int^{t}_{0} \left\langle \nabla_y f_1(Y^{\varepsilon}_s), F(X^{\varepsilon}_s, Y^{\varepsilon}_s )+ \underbrace{\varepsilon^{-r_0}}_{\text{I}} G(X^{\varepsilon}_s, Y^{\varepsilon}_s)\right\rangle ds \\
&\ + \int^{t}_{0}\left(-\left(-\Delta_y\right)^{\frac{\alpha_2}{2}} f_1(Y^{\varepsilon}_{s})\right)ds
+\int^{t}_{0}\int_{\mathbb{R}^m \backslash \{ 0\}}\left[f_1(Y^{\varepsilon}_{s-}+ y)-f_1(Y^{\varepsilon}_{s-})\right]\widetilde{N}_2(ds,dy)\\
&\ +\varepsilon^{2-r_0} \int^{t}_{0} \left\langle \nabla_y u(X^{\varepsilon}_s,Y^{\varepsilon}_s), F(X^{\varepsilon}_s, Y^{\varepsilon}_s )+\varepsilon^{-r_0}G(X^{\varepsilon}_s, Y^{\varepsilon}_s)\right\rangle ds\\
&\ + \varepsilon^{2-r_0}  \int^{t}_{0}\left(-\left(-\Delta_y\right)^{\frac{\alpha_2}{2}}u(X^\varepsilon_s, Y^{\varepsilon}_s)\right)ds
+\varepsilon^{2-r_0} \int^{t}_{0}\int_{\mathbb{R}^m \backslash \{ 0\}}\left[u(X^\varepsilon_{s-},Y^{\varepsilon}_{s-}+ y)-u(X^\varepsilon_{s-},Y^{\varepsilon}_{s-})\right]\widetilde{N}_2(ds,dy)\\
&\ + \underbrace{\varepsilon^{-r_0}}_{\text{II}} \int^{t}_{0} \mathcal{L}_1u(X^\varepsilon_s,Y^{\varepsilon}_s)ds +\varepsilon^{2-r_0}\int^{t}_{0}\int_{\mathbb{R}^n \backslash \{ 0\}}\left[u(X^\varepsilon_{s-}+ \varepsilon^{-\frac{2}{\alpha_1}}x,Y^{\varepsilon}_{s-})- u(X^\varepsilon_{s-},Y^{\varepsilon}_{s-})\right]\widetilde{N}_1(ds,dx),
\end{aligned}
\end{equation}
where
\begin{equation*}
-\left(-\Delta_y\right)^{\frac{\alpha_2}{2}} f_1(y)=\int_{\mathbb{R}^m \backslash \{ 0\}} \left[ f_1(y+z)-f_1(y)-I_{\{|z|\leq 1\}}\langle z, \partial_{y}f_1(y)\rangle \right] \nu_{2}(dz).
\end{equation*}

Since $u$ solves the equation \eqref{eqn-1}, the two terms of order $\varepsilon^{-r_0}$ in I and II in \eqref{f0} cancel out. Thus we obtain
\begin{equation}
\label{fff0}
\begin{aligned}
f_1(Y^{\varepsilon}_t)=&\ f_1(y) +\int^{t}_{0} \left\langle \nabla_y f_1(Y^{\varepsilon}_s), F(X^{\varepsilon}_s, Y^{\varepsilon}_s )+\varepsilon^{2-2r_0}\sum_{i}G_i(X^\varepsilon_s,Y^{\varepsilon}_s)\partial_{y_i}\widetilde{{G}}(X^\varepsilon_s,Y^{\varepsilon}_s)\right\rangle ds \\
&\ +\varepsilon^{2-2r_0}\int^{t}_0 \sum_{i,j}\partial_{y_i}\partial_{y_j}f_1(Y^{\varepsilon}_s)G_i{\widetilde{G}}_j(X^\varepsilon_s,Y^{\varepsilon}_s)ds
+\int^{t}_{0}\left(-\left(-\Delta_y\right)^{\frac{\alpha_2}{2}} f_1(Y^{\varepsilon}_{s})\right)ds\\
&\ + \int^{t}_{0}\int_{\mathbb{R}^m\backslash \{ 0\}}\left[f_1(Y^{\varepsilon}_{s-}+ y)-f_1(Y^{\varepsilon}_{s-})\right]\widetilde{N}_2(ds,dy)+\varepsilon^{2-r_0} R^{\varepsilon}_u(t),
\end{aligned}
\end{equation}
where
\begin{equation}
\label{R}
\begin{aligned}
R^{\varepsilon}_u(t)=&\ u(x,y)-u(X^{\varepsilon}_t, Y^{\varepsilon}_t)+\int^{t}_{0} \left\langle \nabla_y u(X^{\varepsilon}_s,Y^{\varepsilon}_s), F(X^{\varepsilon}_s, Y^{\varepsilon}_s )\right\rangle ds\\
&\ + \int^{t}_{0}\left(-\left(-\Delta_y\right)^{\frac{\alpha_2}{2}}u(X^\varepsilon_s, Y^{\varepsilon}_s)\right)ds
+ \int^{t}_{0}\int_{\mathbb{R}^m \backslash \{ 0\}}\left[u(X^\varepsilon_{s-},Y^{\varepsilon}_{s-}+ y)-u(X^\varepsilon_{s-},Y^{\varepsilon}_{s-})\right]\widetilde{N}_2(ds,dy)\\
&\ +\int^{t}_{0}\int_{\mathbb{R}^m \backslash \{ 0\}}\left[u\left(X^\varepsilon_{s-}+ \varepsilon^{-\frac{2}{\alpha_1}}x,Y^{\varepsilon}_{s-}\right)- u(X^\varepsilon_{s-},Y^{\varepsilon}_{s-})\right]\widetilde{N}_1(ds,dy).
\end{aligned}
\end{equation}

In the following, we will show the relative compactness of $\{Y^{\varepsilon}, \varepsilon>0\}$ in the metric space $\mathbb{D}\left([0,T], \mathbb{R}^{m}\right)$.
\begin{lemma}
\label{pro}
The family $\{Y^{\varepsilon}, \varepsilon>0\}$ of second solution processes of \eqref{slo0} satisfies the following conditions:\\
(i) For all $T>0$ and $\delta>0$, there exists $N>0$, such that
\begin{equation*}
\mathbb{P}\left(\sup_{0 \leq t \leq T}|Y^{\varepsilon}_t|>N\right)\leq \delta, ~~\forall 0 <\varepsilon <1;
\end{equation*}
(ii) For all $T>0$, it holds that
\begin{equation*}
\begin{aligned}
\lim_{\delta\rightarrow 0}\limsup_{\varepsilon\to0}\sup_{0\le\tau< T-\delta}\mathbb{P}(|Y^{\varepsilon}_{\tau+\delta}-Y^{\varepsilon}_{\tau}|>\lambda)=0, ~~\forall \lambda >0,
\end{aligned}
\end{equation*}
where the second supremum is taken over all stopping time $\tau$ satisfying $0\le\tau< T-\delta$.
Consequently, \cite[Theorem VI.4.5]{JS13} yields that the family $\{Y^{\varepsilon}, \varepsilon>0\}$ is relative compact.
\end{lemma}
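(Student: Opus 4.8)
The plan is to prove (i) and (ii) separately, in each case using the decomposition \eqref{fff0} with a suitable choice of test function $f_1$ and controlling the remainder $\varepsilon^{2-r_0}R^\varepsilon_u(0,t)$ by the a-priori estimates of Section~4.1. The key point throughout is that $0<r_0<1-1/\alpha_1$ forces $2-2r_0>0$, so the extra drift $\varepsilon^{2-2r_0}\sum_i G_i\,\partial_{y_i}\widetilde G$ vanishes in the limit, and that $r_1:=2-r_0>2p/\alpha_1$ can be arranged for some $p\in(1,\alpha_1\wedge\alpha_2)$ since $2-r_0>1+1/\alpha_1>2/\alpha_1$; then Lemma~\ref{corollary1} kills any term carrying the prefactor $\varepsilon^{2-r_0}$ times a power of $|X^\varepsilon|$.

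For part (i), first I would obtain a moment bound on $Y^\varepsilon$ itself. Writing $Y^\varepsilon_t=y+\int_0^tF(X^\varepsilon_s,Y^\varepsilon_s)\,ds+\varepsilon^{-r_0}\int_0^tG(X^\varepsilon_s,Y^\varepsilon_s)\,ds+L^{\alpha_2}_t$, the term with $F$ is bounded by $K_1t$ thanks to $(\mathbf{A_F})$, and $\E\sup_{s\le T}|L^{\alpha_2}_s|^p\le C_pT^{p/\alpha_2}$ for $p<\alpha_2$. The dangerous term is $\varepsilon^{-r_0}\int_0^tG\,ds$; here one must use the centering $(\mathbf{A_{G2}})$ and the corrector. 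Concretely, apply \eqref{fff0} with $f_1(y)=y_k$ (componentwise; more precisely approximate the identity by $C^\infty_0$ functions, or first prove a bound on a truncated version and pass to the limit), so that $\nabla_yf_1$ is a constant unit vector and the $\varepsilon^{2-2r_0}$-terms and the fractional-Laplacian term disappear. This expresses $Y^{\varepsilon,k}_t$ as $y_k+\int_0^tF_k\,ds+(\text{martingale from }\widetilde N_2)+\varepsilon^{2-r_0}R^\varepsilon_u(0,t)$. Using Lemma~\ref{ps}(i),(iii) to bound $u$ and $\nabla_yu$ by $C(1+|x|^{1/2})$, Lemma~\ref{corollary1} with $p=1/2$ (or any admissible $p$) to control $\E\sup_{t\le T}|X^\varepsilon_t|^{1/2}$ against $\varepsilon^{-2/\alpha_1}$, and the Kunita/Bichteler-Jacod inequality for the compensated Poisson integrals in $R^\varepsilon_u$, one gets $\E\sup_{t\le T}|\varepsilon^{2-r_0}R^\varepsilon_u(0,t)|\to0$, hence a uniform-in-$\varepsilon$ bound $\E\sup_{t\le T}|Y^\varepsilon_t|^{p}\le C(T,p)$ for some $p\in(1,\alpha_1\wedge\alpha_2)$. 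Markov's inequality then gives (i) with $N=N(\delta,T)$.

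For part (ii), I would run the same decomposition \eqref{fff0} between the stopping time $\tau$ and $\tau+\delta$ (using the strong Markov property / optional-stopping form, which is legitimate since all the integrands are adapted). Again take $f_1$ approximating $y_k$. The increment $Y^{\varepsilon,k}_{\tau+\delta}-Y^{\varepsilon,k}_\tau$ splits into: a drift term $\int_\tau^{\tau+\delta}F_k\,ds$ bounded by $K_1\delta$; a drift term with prefactor $\varepsilon^{2-2r_0}$ involving $G\widetilde G$ and $\nabla_y\widetilde G$, which by Lemma~\ref{ps} is $O(\varepsilon^{2-2r_0}\delta\,(1+\sup|X^\varepsilon|))$ and tends to $0$; the $\widetilde N_2$-martingale increment, whose $p$-th moment is $O(\delta)$ by the Lévy-measure integrability and the boundedness coming from the $C^\infty_0$ cutoff (or, for the untruncated identity test function, directly by the fact that $\int_\tau^{\tau+\delta}$ of a pure-jump $\alpha_2$-stable increment has $\sup$-moment $O(\delta^{p/\alpha_2})$); and finally $\varepsilon^{2-r_0}(R^\varepsilon_u(\tau+\delta)-R^\varepsilon_u(\tau))$, which is handled exactly as in (i) and is $o(1)$ uniformly in $\tau$. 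Collecting, $\E|Y^\varepsilon_{\tau+\delta}-Y^\varepsilon_\tau|^p\le C(\delta+\delta^{p/\alpha_2})+o_\varepsilon(1)$ uniformly over stopping times $\tau<T-\delta$; taking $\limsup_{\varepsilon\to0}$ then $\delta\to0$ and using Markov's inequality yields (ii). With (i) and (ii) established, \cite[Theorem VI.4.5]{JS13} (Aldous' criterion for tightness in $\mathbb D([0,T];\R^m)$) gives the relative compactness.

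The main obstacle I expect is the careful bookkeeping of the remainder $\varepsilon^{2-r_0}R^\varepsilon_u$: it contains a compensated Poisson integral against $\widetilde N_1$ with the large jump amplitude $u(X^\varepsilon_{s-}+\varepsilon^{-2/\alpha_1}x,\cdot)-u(X^\varepsilon_{s-},\cdot)$, and one must check that the prefactor $\varepsilon^{2-r_0}$ beats the blow-up of this integral — this is precisely where the constraint $r_0<1-1/\alpha_1$ (equivalently $2-r_0>1+1/\alpha_1$) is used, together with the sublinear growth $|\nabla_x\widetilde G|\le C$ of Lemma~\ref{ps}(ii) which makes the jump amplitude $\le C\varepsilon^{-2/\alpha_1}|x|$ and hence the stochastic integral of order $\varepsilon^{-2/\alpha_1}$ times an $\alpha_1$-stable quantity. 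A secondary technical nuisance is that the identity test function $f_1(y)=y_k$ is not in $C^\infty_0$, so one must either localize with a sequence of cutoffs and control the error on $\{\sup|Y^\varepsilon|>N\}$ using (i), or argue with the SDE for $Y^\varepsilon$ directly and only invoke \eqref{fff0} on smooth test functions; I would adopt the latter, cleaner route.
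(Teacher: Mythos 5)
Your proposal is correct and follows essentially the same route as the paper: the corrector decomposition \eqref{fff0}, the bounds $|u|,|\nabla_y u|\le C(1+|x|^{1/2})$, $|\nabla_x u|\le C$ from Lemma \ref{ps}, the moment estimate of Lemma \ref{corollary1}, the exponent bookkeeping $2-r_0-2/\alpha_1>0$ coming from $r_0<1-1/\alpha_1$, and finally Aldous's criterion. The one genuine difference is in part (i): you derive a uniform $p$-th moment bound $\E\sup_{t\le T}|Y^\varepsilon_t|^p\le C$ for some $p\in(1,\alpha_1\wedge\alpha_2)$ using (approximations of) the identity test function and then apply Markov's inequality, whereas the paper takes $f_1(y)=\log(1+|y|^2)$, so that $(1+|y|)|\nabla f_1|+(1+|y|)^2|\nabla^2 f_1|\le C$, obtains only $\sup_\varepsilon\E\sup_{t\le T}\log(1+|Y^\varepsilon_t|^2)<\infty$, and concludes by Chebyshev applied to the logarithm. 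The paper's choice makes every $f_1$-dependent factor bounded from the outset and so requires no moment of $Y^\varepsilon$ at all; your route costs a little more (you must verify, e.g.\ via the Bichteler--Jacod/BDG inequality with Jensen for $p/2<1$, that the compensated $\widetilde N_1$-integral in $R^\varepsilon_u$ has finite $p$-th moment of order $\varepsilon^{-2p/\alpha_1}$), but it delivers a stronger conclusion and is in fact what the paper itself does for part (ii), where it takes $f_1(y)=y$ for the shifted process $\widetilde Y^\varepsilon=Y^\varepsilon-y$ and proves the first-moment bound \eqref{425}. Two small points: with $f_1(y)=y_k$ the drift term $\varepsilon^{2-2r_0}\langle\nabla_yf_1,\sum_iG_i\partial_{y_i}\widetilde G\rangle$ does not disappear identically (only the Hessian term does); it merely vanishes in the limit, as you correctly handle in part (ii). And your remark that the identity is not in $C^\infty_0$ is a legitimate technical gap that the paper silently ignores; either of your proposed fixes is adequate.
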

\begin{proof}
(i). Choose in \eqref{fepsilon0} that $f_1(y)=\log(1+|y|^2)$. Then we get
\begin{equation}
\label{f}
(1+|y|)\cdot|\nabla_{y} f_1(y)|+(1+|y|)^2\cdot|\nabla^{2}_{y}f_1(y)|+(1+|y|)^3\cdot|\nabla^3_yf_1(y)|\leq C.
\end{equation}
By Lemma \ref{ps} and \eqref{u}, we have for every $x \in \R^n,y \in \R^m$,
\begin{equation*}
|u(x,y)|\leq C(1+|x|^{\frac{1}{2}}),
\end{equation*}
and
\begin{equation}
\label{deria}
|\nabla_{x}u(x,y)| \leq C, \quad |\nabla_{y}u(x,y)| \leq C(1+|x|^{\frac{1}{2}}), \quad |\nabla^2_{y}u(x,y)| \leq C(1+|x|^{\frac{1}{2}})+C(1+|x|).
\end{equation}
Using the above estimates and Lemma \ref{corollary1}, we obtain
\begin{align}
\mathbb{E}_{x,y} \left( \sup_{0 \leq t \leq T}|u(X^{\varepsilon}_t, Y^{\varepsilon}_t )| \right) &\leq C  \mathbb{E}_{x,y} \left( \sup_{0 \leq t \leq T}(1+|X^{\varepsilon}_t|^{\frac{1}{2}}) \right) < \infty, \label{est-8} \\
\mathbb{E}_{x,y} \left( \sup_{0 \leq t \leq T}|\nabla_{y}u(X^{\varepsilon}_t, Y^{\varepsilon}_t )|^2 \right) & \leq C  \mathbb{E}_{x,y} \left( \sup_{0 \leq t \leq T}(1+|X^{\varepsilon}_t|) \right) < \infty, \label{ub} \\
\mathbb{E}_{x,y} \left( \sup_{0 \leq t \leq T}|\nabla^2_{y}u(X^{\varepsilon}_t, Y^{\varepsilon}_t )| \right) & \leq C  \mathbb{E}_{x,y} \left( \sup_{0 \leq t \leq T}(1+|X^{\varepsilon}_t|) \right) < \infty. \label{est-7}
\end{align}
Recall from \eqref{fff0} and \eqref{R} that
\begin{equation}
\label{f0f}
\begin{aligned}
f_1(Y^{\varepsilon}_t) - f_1(Y^{\varepsilon}_{0})=&\ \int^{t}_{0} \left\langle \nabla_y f_1(Y^{\varepsilon}_s), F(X^{\varepsilon}_s, Y^{\varepsilon}_s )+\varepsilon^{2-2r_0}\sum_{i}G_i(X^\varepsilon_s,Y^{\varepsilon}_s)\partial_{y_i}\widetilde{{G}}(X^\varepsilon_s,Y^{\varepsilon}_s)\right\rangle ds \\
&\ +\varepsilon^{2-2r_0}\int^{t}_0 \sum_{i,j}\partial_{y_i}\partial_{y_j}f_1(Y^{\varepsilon}_s)G_i{\widetilde{G}}_j(X^\varepsilon_s,Y^{\varepsilon}_s)ds
+\int^{t}_{0}\left(-\left(-\Delta_y\right)^{\frac{\alpha_2}{2}} f_1(Y^{\varepsilon}_{s})\right)ds\\
&\ + \int^{t}_{0}\int_{\mathbb{R}^m\backslash \{ 0\}}\left[f_1(Y^{\varepsilon}_{s-}+ y)-f_1(Y^{\varepsilon}_{s-})\right]\widetilde{N}_2(ds,dy)+\varepsilon^{2-r_0}\left[u(X^{\varepsilon}_{0},Y^{\varepsilon}_{0})-u(X^{\varepsilon}_t, Y^{\varepsilon}_t)\right]\\
&\ +\varepsilon^{2-r_0}\left[\int^{t}_{0} \left\langle \nabla_y u(X^{\varepsilon}_s,Y^{\varepsilon}_s), F(X^{\varepsilon}_s, Y^{\varepsilon}_s )\right\rangle ds\right]+\varepsilon^{2-r_0}\left[\int^{t}_{0}\left(-\left(-\Delta_y\right)^{\frac{\alpha_2}{2}}u(X^\varepsilon_s, Y^{\varepsilon}_s)\right)ds\right]\\
&\ +\varepsilon^{2-r_0}\left[\int^{t}_{0}\int_{\mathbb{R}^m \backslash \{ 0\}}\left[u(X^\varepsilon_{s-},Y^{\varepsilon}_{s-}+ y)-u(X^\varepsilon_{s-},Y^{\varepsilon}_{s-})\right]\widetilde{N}_2(ds,dy)\right]\\
&\ +\varepsilon^{2-r_0}\left[\int^{t}_{0}\int_{\mathbb{R}^m \backslash \{ 0\}}\left[u\left(X^\varepsilon_{s-}+ \varepsilon^{-\frac{2}{\alpha_1}}x,Y^{\varepsilon}_{s-}\right)- u(X^\varepsilon_{s-},Y^{\varepsilon}_{s-})\right]\widetilde{N}_1(ds,dx)\right]\\
=:&\ I_1+I_2+I_3+I_4+J_1+J_2+J_3+J_4+J_5.
\end{aligned}
\end{equation}
In the following, we will estimate the terms in \eqref{f0f} one-by-one.

For the term $I_1$, we use the inequality \eqref{f}, Hypothesis ($\bf{A_{F}}$), Hypothesis ($\bf{A_{G1}}$), \eqref{index} and \eqref{eqn-4} to deduce that
\begin{equation}
\label{I_1}
\begin{aligned}
\mathbb{E}\left[\sup_{0 \leq t \leq T}|I_1|\right] & \leq C_{T}+\varepsilon^{2-2r_0}\mathbb{E}\left[\int^{T}_{0}\left(1+|X^{\varepsilon}_t|^{\frac{1}{2}}\right)\right]
 \leq C_T \int_0^T\mathbb{E}\left[\left(1+|X^{\varepsilon}_{s-}|\right)\right] ds.
\end{aligned}
\end{equation}
For the same reason, we also have the following estimate for the term $I_2$,
\begin{equation*}
\label{I_2}
\mathbb{E}\left[\sup_{0 \leq t \leq T}|I_2|\right] \leq C_T \int_0^T \mathbb{E}\left[\left(1+|X^{\varepsilon}_{s-}|\right)\right] ds.
\end{equation*}
For the term $I_3$, by the choice of $f_1$ and \eqref{f}, we have
\begin{equation*}
\label{I_3}
\begin{aligned}
&\mathbb{E}\left[\sup_{0 \leq t \leq T}\left|\int^{t}_0\left(-(-\Delta_y)^{\frac{\alpha_2}{2}}f_1(Y^{\varepsilon}_{s-})\right)ds\right|\right]\\
&=\mathbb{E}\left[\sup_{0 \leq t \leq T}\left|\int^{t}_{0}\left\{\int_{\mathbb{R}^m}\left[f_1(Y^{\varepsilon}_{s-}+z)-f_1(Y^{\varepsilon}_{s-})-
I_{\{|z|\leq 1\}}\left\langle z, \nabla_{y} f_1(Y^{\varepsilon}_{s-})\right\rangle \right]v_{2}(dz)\right\}ds\right|\right]\\
& \leq \int^T_{0}\left\{\|\nabla_{y} f_1\|_{0}\int_{|z|>1}z\nu_2(dz)+C_{T}\|\nabla^2_{y} f_1\|_0\int_{|z| \leq 1}z^2\nu_2(dz)\right\}ds\\
& \leq C_{T}.
\end{aligned}
\end{equation*}
For the term $I_4$,  by the Burkholder--Davis--Gundy inequality \cite[Theorem 3.50]{PZ07},  Jensen's inequality, the proof of \cite[Lemma 8.22]{PZ07} and \eqref{f}, we have
\begin{equation}\label{I_4}
\begin{aligned}
\mathbb{E}\left(\sup_{0 \leq t \leq T}|I_4|\right)&\leq \mathbb{E}\left[\sup_{t\in [0,T]}\left|\int^{t}_{0}\int_{\mathbb{R}^m \backslash \{ 0\}} \left[f_1(Y^{\varepsilon}_{s-}+ y)-f_1(Y^{\varepsilon}_{s-})\right]\widetilde{N}_2(ds,dy)\right|\right]\\
& \leq  \mathbb{E}\left[\left|\int^{T}_{0}\int_{|y|<1}\left[f_1(Y^{\varepsilon}_{s-}+ y)-f_1(Y^{\varepsilon}_{s-})\right]^2{N}_2(ds,dy)\right|\right]^{\frac{1}{2}}\\
&\quad + \mathbb{E}\left[\left|\int^{T}_{0}\int_{|y|\geq 1}\left[f_1(Y^{\varepsilon}_{s-}+ y)-f_1(Y^{\varepsilon}_{s-})\right]^2{N}_2(ds,dy)\right|\right]^{\frac{1}{2}}\\
&\leq \left\{\int^{T}_{0}\int_{|y|<1}\mathbb{E}
\left[f_1(Y^{\varepsilon}_{s-}+y)-f_1(Y^{\varepsilon}_{s-})\right]^2
\nu_{2}(dy)ds\right\}^{\frac{1}{2}} \\
&\quad +\left\{\int^{T}_{0}\int_{|y|\geq 1}\mathbb{E}\left| f_1(Y^{\varepsilon}_{s-}+y)-f_1(Y^{\varepsilon}_{s-})\right|
\nu_{2}(dy)ds\right\} \\
& \leq C\left[\int^{T}_{0}\int_{|y|<1}|y|^2\nu_{2}(dy)ds\right]^{\frac{1}{2}}+C\left[\int^{T}_{0}\int_{|y|\geq 1}|y|\nu_{2}(dy)ds\right]\\
&\leq  C_T.
\end{aligned}
\end{equation}
For the term $J_1$, we use \eqref{est-8} to have
\begin{equation*}
\label{J_1}
\mathbb{E}\left[\sup_{0 \leq t \leq T}|J_1|\right]\leq C \varepsilon^{2-r_0}\left\{1+\mathbb{E}\left[\sup_{0 \leq t \leq T}|X^{\varepsilon}_t|\right]\right\}^{1/2}.
\end{equation*}
For the term $J_2$, by \eqref{ub}, H\"older inequality and Hypothesis $(\bf{A_{F}})$, we obtain
\begin{equation*}
\label{J_2}
\mathbb{E}\left[\sup_{0 \leq t \leq T}|J_2|\right]\leq \varepsilon^{2-r_0}\mathbb{E}\left[\left(1+\sup_{0 \leq t \leq T}|X^{\varepsilon}_t|\right)\right].
\end{equation*}
For the term $J_3$, we apply the estimates \eqref{ub} and \eqref{est-7} to obtain
\begin{equation*}
\label{J_3}
\begin{aligned}
&\quad\ \mathbb{E}\left[\sup_{0 \leq t \leq T}|J_3|\right]\\
&=\mathbb{E}\left[\sup_{0 \leq t \leq T}\left[\varepsilon^{2-r_0}\int^{t}_{0}\left\{\int_{\mathbb{R}^m}\left[u(X^{\varepsilon}_{s-},Y^{\varepsilon}_{s-}+z)-u(X^{\varepsilon}_{s-},Y^{\varepsilon}_{s-})-
I_{\{|z|\leq 1\}}\langle z, \nabla_{y} u(X^{\varepsilon}_{s-},Y^{\varepsilon}_{s-})\rangle \right]v_{2}(dz)\right\}ds\right]\right]\\
& \leq \varepsilon^{2-r_0}\int^T_{0}\mathbb{E}\left\{\|\nabla_y u(X^{\varepsilon}_{s-},\cdot)\|_{0}\int_{|z|>1}z\nu_2(dz)+ \|\nabla^2_{y} u(X^{\varepsilon}_{s-},\cdot)\|_{0}\int_{|z| \leq 1}z^2\nu_2(dz)\right\}ds\\
& \leq \varepsilon^{2-r_0}C\left({\|u\|_2,\alpha_2}\right) \int^T_{0} \left(1+\mathbb{E}\left[\sup_{0 \leq t \leq T}|X^{\varepsilon}_{s-}|\right]\right) ds.
\end{aligned}
\end{equation*}
For the term $J_4$, we use the same argument as the estimates of $I_4$ in \eqref{I_4}, as well as the second inequality of \eqref{deria}. Then we have
\begin{equation*}
\label{J_4}
\begin{aligned}
\mathbb{E}\left(\sup_{0 \leq t \leq T}|J_4|\right)&\leq \varepsilon^{2-r_0}\mathbb{E}\left[\sup_{t\in [0,T]}\left|\int^{t}_{0}\int_{\mathbb{R}^m \backslash \{ 0\}} \left[u(X^{\varepsilon}_{s-},Y^{\varepsilon}_{s-}+ y)-u(X^{\varepsilon}_{s-},Y^{\varepsilon}_{s-})\right]\widetilde{N}_2(ds,dy)\right|\right]\\
& \leq \varepsilon^{2-r_0} \mathbb{E}\left[\int^{T}_{0}\int_{|y|< 1}\left[u(X^{\varepsilon}_{s-},Y^{\varepsilon}_{s-}+ y)-u(X^{\varepsilon}_{s-},Y^{\varepsilon}_{s-})\right]^2{N}_2(ds,dy)\right]^{\frac{1}{2}}\\
&\quad +\varepsilon^{2-r_0} \mathbb{E}\left[\int^{T}_{0}\int_{|y|\geq 1}\left[u(X^{\varepsilon}_{s-},Y^{\varepsilon}_{s-}+ y)-u(X^{\varepsilon}_{s-},Y^{\varepsilon}_{s-})\right]^2{N}_2(ds,dy)\right]^{\frac{1}{2}}\\
&\leq \varepsilon^{2-r_0}\left\{\int^{T}_{0}\int_{|y|<1}\mathbb{E}
\left[u(X^{\varepsilon}_{s-},Y^{\varepsilon}_{s-}+y)-u(X^{\varepsilon}_{s-},Y^{\varepsilon}_{s-})\right]^2
\nu_{2}(dy)ds\right\}^{\frac{1}{2}}\\
&\quad +\varepsilon^{2-r_0}\left\{\int^{T}_{0}\int_{|y|\geq 1}\mathbb{E} \left|u(X^{\varepsilon}_{s-},Y^{\varepsilon}_{s-}+y)- u(X^{\varepsilon}_{s-},Y^{\varepsilon}_{s-})\right|
\nu_{2}(dy)ds\right\} \\
& \leq C\varepsilon^{2-r_0}\left[\mathbb{E} \int^{T}_{0}\int_{|y|<1} |y|^2\nu_{2}(dy)(1+|X^{\varepsilon}_{s-}|)ds\right]^{\frac{1}{2}} \\
&\quad +C\varepsilon^{2-r_0}\left[\mathbb{E}\int^{T}_{0}\int_{|y|\geq 1}|y|\nu_{2}(dy)(1+|X^{\varepsilon}_{s-}|^{\frac{1}{2}})ds\right]\\
&\leq C_T \varepsilon^{2-r_0} \mathbb{E}\left(1+\sup_{0 \leq t \leq T}\left|X^{\varepsilon}_{t}\right|\right).
\end{aligned}
\end{equation*}
Similarly, we also have the following estimates for $J_5$,
\begin{equation}
\label{J_5}
\begin{aligned}
\mathbb{E}\left(\sup_{0 \leq t \leq T}|J_5|\right) \leq &\ \varepsilon^{2-r_0}\mathbb{E}\left[\sup_{t\in [0,T]}\left|\int^{t}_{0}\int_{\mathbb{R}^m \backslash \{ 0\}}\left[u(X^{\varepsilon}_{s-}+\varepsilon^{-\frac{2}{\alpha_1}}x,Y^{\varepsilon}_{s-})
-u(X^{\varepsilon}_{s-},Y^{\varepsilon}_{s-})\right]\widetilde{N}_1(ds,dx)\right|\right]\\
\leq &\ \varepsilon^{2-r_0} \mathbb{E}\left[\int^{T}_{0}\int_{|x|<1}\left[u(X^{\varepsilon}_{s-}+\varepsilon^{-\frac{2}{\alpha_1}}x,Y^{\varepsilon}_{s-})-u(X^{\varepsilon}_{s-},Y^{\varepsilon}_{s-})\right]^2{N}_1(ds,dx)\right]^{\frac{1}{2}}\\
&\ +\varepsilon^{2-r_0} \mathbb{E}\left[\left|\int^{T}_{0}\int_{|x|\geq 1}\left[u(X^{\varepsilon}_{s-}+\varepsilon^{-\frac{2}{\alpha_1}}x,Y^{\varepsilon}_{s-})-u(X^{\varepsilon}_{s-},Y^{\varepsilon}_{s-})\right]^2{N}_1(ds,dx)\right|\right]^{\frac{1}{2}}\\
\leq&\ \varepsilon^{2-r_0}\left\{\int^{T}_{0}\int_{|x|<1}\mathbb{E}
\left[u(X^{\varepsilon}_{s-}+\varepsilon^{-\frac{2}{\alpha_1}}x,Y^{\varepsilon}_{s-})-u(X^{\varepsilon}_{s-},Y^{\varepsilon}_{s-})\right]^2
\nu_{1}(dx)ds\right\}^{\frac{1}{2}}\\
&\ +\varepsilon^{2-r_0}\left\{\int^{T}_{0}\int_{|x|\geq 1}\mathbb{E} \left| u(X^{\varepsilon}_{s-}+\varepsilon^{-\frac{2}{\alpha_1}}x,Y^{\varepsilon}_{s-}) -u(X^{\varepsilon}_{s-},Y^{\varepsilon}_{s-})\right|
\nu_{1}(dx)ds\right\} \\
\leq &\ C\varepsilon^{2-r_0-2/{\alpha_1}} \left[\int^{T}_{0}\int_{|x|<1}|x|^2\nu_{1}(dx)ds\right]^{\frac{1}{2}} +C\varepsilon^{2-r_0-2/{\alpha_1}} \int^{T}_{0}\int_{|x|\geq 1}|x|\nu_{1}(dx)ds \\
\leq&\ C_T \varepsilon^{2-r_0-2/{\alpha_1}}.
\end{aligned}
\end{equation}

Plugging the above estimates  \eqref{I_1}--\eqref{J_5} into \eqref{f0f} and using the condition \eqref{index}, we obtain
\begin{equation}
\label{log}
\sup_{0< \varepsilon \leq 1}\mathbb{E}_{x,y}\left(\sup_{0 \leq t \leq T}\log \left(1+|Y^{\varepsilon}_t|^2\right)\right)< \infty.
\end{equation}
In view of \eqref{log}, we use Chebyshev's inequality to get
\begin{equation*}
\begin{aligned}
\mathbb{P}\left({\sup_{0 \leq t \leq T}}|Y^{\varepsilon}_t|>N\right)&= \mathbb{P}\left(\log\left(1+{\sup_{0 \leq t \leq T}}|Y^{\varepsilon}_t|^2\right)>\log(1+N^2)\right) \\
& \leq \frac{\sup_{0< \varepsilon \leq 1}\mathbb{E}_{x,y}\left(\sup_{0 \leq t \leq T}\log \left(1+|Y^{\varepsilon}_t|^2\right)\right)}{\log(1+N^2)}\\
&\rightarrow 0, ~~ N\rightarrow \infty.
\end{aligned}
\end{equation*}

(ii). Let $\tau \leq T-\delta_0$ be a bounded stopping time. For any $\delta \in (0, \delta_0)$, by the strong Markov property, we have
\begin{equation}
\label{418}
\mathbb{P}\left(|Y^{\varepsilon}_{\tau+\delta}-Y^{\varepsilon}_{\tau}|>\lambda\right) =\mathbb{E}\left(\mathbb{P} \left(|Y^{\varepsilon}_{s+\delta}-y|>\lambda\right)|_{(s,y)=(\tau, Y^{\varepsilon}_{\tau})}\right).
\end{equation}
Define
\begin{equation*}
\widetilde{Y}^{\varepsilon}_t=Y^{\varepsilon}_t-y,
\end{equation*}
then we have
\begin{equation*}
\left\{
\begin{aligned}
d{\widetilde{Y}^{\varepsilon}_t}&= F({X^{\varepsilon}_t},{\widetilde{Y}^{\varepsilon}_t}+y)dt + \frac{1}{\varepsilon}G({X^{\varepsilon}_t},{\widetilde{Y}^{\varepsilon}_t+y})dt + dL_t^{{\alpha}_2}\\
 \widetilde{Y}^{\varepsilon}_0&=0 \in \mathbb{R}^m.
 \end{aligned}
 \right.
\end{equation*}

Let us write \eqref{f0f} in the particular case of the vector function $f_1(y)=y$. We obtain
\begin{equation*}
\begin{aligned}
\widetilde{Y}^{\varepsilon}_t =&\ \widetilde{Y}^{\varepsilon}_{0} +\int^{t}_{0} \left\langle \mathbf{I}, F(X^{\varepsilon}_s, \widetilde{Y}^{\varepsilon}_s +y)+\varepsilon^{2-2r_0}\sum_{i}G_i(X^\varepsilon_s,\widetilde{Y}^{\varepsilon}_s+y)\partial_{y_i}\widetilde{{G}}(X^\varepsilon_s,\widetilde{Y}^{\varepsilon}_s)\right\rangle ds +\int^{t}_{0}\left[-\left(-\Delta_y\right)^{\frac{\alpha_2}{2}} \widetilde{Y}^{\varepsilon}_{s}\right]ds\\
&\ + \int^{t}_{0}\int_{\mathbb{R}^m\backslash \{0\}}y\widetilde{N}_2(ds,dy)+\varepsilon^{2-r_0}\left[u(X^{\varepsilon}_{0},\widetilde{Y}^{\varepsilon}_{0})-u(X^{\varepsilon}_t, \widetilde{Y}^{\varepsilon}_t)\right]\\
&\ +\varepsilon^{2-r_0}\left[\int^{t}_{0} \left\langle \nabla_y u(X^{\varepsilon}_s,\widetilde{Y}^{\varepsilon}_s), F(X^{\varepsilon}_s, \widetilde{Y}^{\varepsilon}_s )\right\rangle ds\right]+\varepsilon^{2-r_0}\left[\int^{t}_{0}\left(-\left(-\Delta_y\right)^{\frac{\alpha_2}{2}}u(X^\varepsilon_s, \widetilde{Y}^{\varepsilon}_s)\right)ds\right]\\
&\ +\varepsilon^{2-r_0}\left[\int^{t}_{0}\int_{\mathbb{R}^m \backslash \{ 0\}}\left[u(X^\varepsilon_{s-},\widetilde{Y}^{\varepsilon}_{s-}+ y)-u(X^\varepsilon_{s-},Y^{\varepsilon}_{s-})\right]\widetilde{N}_2(ds,dy)\right]\\
&\ +\varepsilon^{2-r_0}\left[\int^{t}_{0}\int_{\mathbb{R}^m \backslash \{ 0\}}\left[u\left(X^\varepsilon_{s-}+ \varepsilon^{-\frac{2}{\alpha_1}}x,\widetilde{Y}^{\varepsilon}_{s-}\right)- u(X^\varepsilon_{s-},\widetilde{Y}^{\varepsilon}_{s-})\right]\widetilde{N}_1(ds,dx)\right].
\end{aligned}
\end{equation*}

Using a technique similar to (i), we obtain
\begin{equation}
\label{425}
\sup_{0< \varepsilon \leq 1}\mathbb{E}_{x,y}\sup_{0 \leq t \leq T}|\widetilde{Y}^{\varepsilon}_t|<C T^{\frac{1}{\alpha_1}}.
\end{equation}
Thus we have
\begin{equation*}
\begin{aligned}
\mathbb{P}_{s,y}\left(|\widetilde{Y}^{\varepsilon}_{s+\delta}|>\lambda\right)
& \leq \frac{\mathbb{E}^{\mathbb{P}_{s,y}}\left(|\widetilde{Y}^{\varepsilon}_{s+\delta}| \right)}{\lambda} \leq  \frac{C \delta^{\frac{1}{\alpha_1}}}{\lambda}.
\end{aligned}
\end{equation*}
Combining \eqref{418} and \eqref{425}, we obtain
\begin{equation*}
\begin{aligned}
\mathbb{P}\left(|Y^{\varepsilon}_{\tau+\delta}-Y^{\varepsilon}_{\tau}|>\lambda\right) &\leq \mathbb{P}(|Y^\varepsilon_{\tau}|>R)+ \frac{C \delta^{\frac{1}{\alpha_1}}}{\lambda}
 \leq \frac{C T^{\frac{1}{\alpha_1}}}{R}+ \frac{C \delta^{\frac{1}{\alpha_1}}}{\lambda}.
\end{aligned}
\end{equation*}
Letting $\delta\rightarrow 0$ first and then $R\rightarrow \infty$, one sees that (ii) is satisfied.

\end{proof}

\subsection{\bf{Proof of Theorem 2.}}


Consider the Skorokhod space $\mathbb{D}\left([0,T], \mathbb{R}^{m}\right)$ consisting of all $\mathbb{R}^{m}$-valued c\`adl\`ag functions on $[0,T]$, equipped with the Skorokhod topology. It is well-known that $\mathbb{D}\left([0,T], \mathbb{R}^{m}\right)$ is a Polish space (e.g., \cite[Section VI.1]{JS13} or \cite[Section 14]{Bil68}).


Next, we will present the uniform approximation of c\`adl\`ag functions by step functions, which comes from \cite[Lemma 9, Appendix A]{rc}.
\begin{lemma}
\label{stepa}
Let $h$ be a c\`adl\`ag function on $[0,T]$. If $(t^n_k)$ is a sequence of subdivisions $0=t^n_0< t_1 <\cdots < t^{n}_{k_n}=t$ of  $ [0, T ]$ such that
\begin{equation*}
\sup_{0 \leq i \leq k-1}|t^{n}_{i+1}-t^{n}_{i}|\rightarrow 0, ~~\sup_{u \in [0, T]\backslash \{ t^n_0, \cdots, t^n_{k_n}\}}|\Delta h(u)|\rightarrow 0, ~~ as ~~n\rightarrow \infty.
\end{equation*}
Then we have
\begin{equation*}
\sup_{u \in [0, T]}\left|h(u)-\sum_{0}^{k_n -1}h(t_i)I_{t^{n}_{i}, t^{n}_{i+1}(u)+h(t^{n}_{k_n})I_{t^n_{k_n}}}(u)\right|\rightarrow 0.
\end{equation*}
\end{lemma}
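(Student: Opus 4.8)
The plan is to deduce the statement from the classical fact that a c\`adl\`ag function on a compact interval has arbitrarily small oscillation on the pieces of a suitable finite partition, and then to verify that the prescribed subdivisions $(t^n_i)$ are, for $n$ large, fine enough to inherit this control. Write $h_n$ for the step function on the left-hand side, so that $h_n\equiv h(t^n_i)$ on $[t^n_i,t^n_{i+1})$ and $h_n(T)=h(T)$, and for an interval $I$ set $\mathrm{osc}(h,I):=\sup_{u,v\in I}|h(u)-h(v)|$. I would first establish the auxiliary claim: for every $\eta>0$ there is a finite partition $0=s_0<s_1<\dots<s_p=T$ with $\mathrm{osc}(h,[s_{j-1},s_j))\le\eta$ for every $j$. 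This is the standard greedy/compactness argument: put $\tau_0=0$ and, while $\tau_k<T$, let $\tau_{k+1}:=\sup\{t\in(\tau_k,T]:\mathrm{osc}(h,[\tau_k,t))\le\eta\}$. Right-continuity of $h$ at $\tau_k$ gives $\tau_{k+1}>\tau_k$; the definition of the supremum gives $\mathrm{osc}(h,[\tau_k,\tau_{k+1}))\le\eta$; and if the $\tau_k$ accumulated at some point $\tau_\infty\le T$ without ever reaching $T$, the existence of the left limit $h(\tau_\infty-)$ together with right-continuity at $\tau_\infty$ would contradict the choice of the $\tau_{k+1}$. Hence the procedure terminates in finitely many steps, yielding the desired $s_j$.

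Next, fix $\e>0$ and apply the claim with $\eta=\e/4$. Using the two hypotheses, I would pick $n$ so large that $\max_i|t^n_{i+1}-t^n_i|<\min_{1\le j\le p}(s_j-s_{j-1})$ and $\sup_{u\notin\{t^n_0,\dots,t^n_{k_n}\}}|\Delta h(u)|<\e/4$; the mesh bound then forces each $[t^n_i,t^n_{i+1})$ to meet at most one of $s_1,\dots,s_{p-1}$. For $u\in[t^n_i,t^n_{i+1})$ I would estimate $|h(u)-h(t^n_i)|$ in two cases. If this interval contains no $s_j$, it lies in a single $[s_{j-1},s_j)$, so the bound is $\le\e/4$. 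If it contains exactly one $s_j$ with $t^n_i<s_j<t^n_{i+1}$ (the degenerate cases $s_j\in\{t^n_i,t^n_{i+1}\}$ reduce to the previous one), the mesh bound gives $[t^n_i,s_j)\subseteq[s_{j-1},s_j)$ and $[s_j,t^n_{i+1})\subseteq[s_j,s_{j+1})$; for $u<s_j$ the bound is again $\le\e/4$, whereas for $u\ge s_j$ I would write
\[
|h(u)-h(t^n_i)|\le|h(u)-h(s_j)|+|h(s_j)-h(s_j-)|+|h(s_j-)-h(t^n_i)|\le\tfrac{\e}{4}+|\Delta h(s_j)|+\tfrac{\e}{4},
\]
the last term obtained by letting $v\uparrow s_j$ within $[s_{j-1},s_j)$. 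Since $s_j$ is interior to $[t^n_i,t^n_{i+1})$, it is not a subdivision point, hence $|\Delta h(s_j)|<\e/4$; altogether $|h(u)-h(t^n_i)|\le 3\e/4$. Combined with $h_n(T)=h(T)$, this gives $\sup_{u\in[0,T]}|h(u)-h_n(u)|\le 3\e/4<\e$ for all large $n$, which is the claim.

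The only genuinely delicate point is the auxiliary claim — verifying $\mathrm{osc}(h,[\tau_k,\tau_{k+1}))\le\eta$ for the correct half-open interval and excluding a left accumulation point of $(\tau_k)$ strictly below $T$; the remainder is bookkeeping. Since the lemma is quoted verbatim from \cite[Lemma 9, Appendix A]{rc}, an alternative is simply to cite it there and only spell out how the mesh condition and the small-jump condition feed into the estimate above.
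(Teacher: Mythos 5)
Your proof is correct. Note that the paper itself does not prove this lemma at all --- it is quoted verbatim from \cite[Lemma 9, Appendix A]{rc} and used as a black box --- so there is no in-paper argument to compare against; your write-up is the standard one behind the cited result: first the c\`adl\`ag oscillation lemma (a finite partition $0=s_0<\dots<s_p=T$ with $\mathrm{osc}(h,[s_{j-1},s_j))\le\eta$, obtained by the greedy construction and ruling out accumulation of the $\tau_k$ via the existence of left limits), then the two-case estimate on each $[t^n_i,t^n_{i+1})$, where the only place the second hypothesis enters is to control $|\Delta h(s_j)|$ when some $s_j$ falls strictly between two consecutive subdivision points. Both delicate steps (the half-open-interval oscillation bound $\mathrm{osc}(h,[\tau_k,\tau_{k+1}))\le\eta$ via monotonicity of the oscillation in the right endpoint, and the observation that an interior $s_j$ cannot be a subdivision point) are handled correctly, so the argument is complete.
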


Due to the tightness of the family $\{Y^{\varepsilon}, \varepsilon>0\}$, there exists a subsequence $\varepsilon_{n}\rightarrow 0$ and a stochastic process $Y$, such that $Y^{\varepsilon_{n}}$ converges weakly to $Y$, as $n\rightarrow \infty$. Based on Lemma \ref{stepa}, we have the following result.
\begin{lemma}
\label{appo}
For sufficient small $\delta >0$, there exist a positive constant $N\in \mathbb{N}$ and $\mathbb{R}^{m}$-valued step functions $y^1, y^2, \cdots, y^N$ such that
\begin{equation*}
\begin{aligned}
&\mathbb{P}\left(\bigcap_{k=1}^N \left\{d_{\mathbb{R}}(Y^{\varepsilon_n},y^{k})>\delta\right\}\right)< \delta, ~~ \forall n \in \mathbb{N}, \\
&\mathbb{P}\left(\bigcap_{k=1}^N \left\{d_{\mathbb{R}}(Y,y^{k})>\delta\right\}\right)< \delta.
\end{aligned}
\end{equation*}
\end{lemma}
\begin{proof}
{ \bf{Step 1.}} By the tightness of the set $\{Y, Y^{\varepsilon_n}, n \in \mathbb{N}\}$, there exists a compact set $K\subseteq \mathbb{D}([0,T]; \mathbb{R}^{m})$ such that for each $0<\delta<<1$, we have
\begin{equation*}
\begin{aligned}
&\mathbb{P}\left(Y^{\varepsilon_n}\in K\right)>1-\delta, ~~
\mathbb{P}\left(Y\in K\right)>1-\delta.
\end{aligned}
\end{equation*}
Since $K$ is compact, it is totally bounded. Hence, $K$ admits a finite $\delta/2$-net, i.e., there exists a finite subset $\{\widetilde{y}^1, \widetilde{y}^2, \cdots, \widetilde{y}^N \}\subseteq\mathbb{D}([0,T]; \mathbb{R}^{m})$ s.t.
\begin{equation*}
K \subseteq \bigcup_{k=1}^{N} \left\{x\in \mathbb{D}([0,T]; \mathbb{R}^{m}) : d_{\mathbb{R}}(\widetilde{y}^{k},x)<\frac{\delta}{2}\right\}.
\end{equation*}
Set $A_k=\left\{x\in \mathbb{D}([0,T]; \mathbb{R}^{m}) : d_{\mathbb{R}}(\widetilde{y}^{k},x)<\frac{\delta}{2}\right\}$, then we have
\begin{equation*}
\bigcap_{k=1}^{N}A^{c}_k \subseteq K^{c}.
\end{equation*}
This implies
\begin{equation*}
\begin{aligned}
&\mathbb{P}\left(\bigcap_{k=1}^N \left\{d_{\mathbb{R}}(Y^{\varepsilon_n},\widetilde{y}^{k})>\frac{\delta}{2}\right\}\right)< \frac{\delta}{2}, ~~ \forall n \in \mathbb{N}, \\
&\mathbb{P}\left(\bigcap_{k=1}^N \left\{d_{\mathbb{R}}(Y,\widetilde{y}^{k})>\frac{\delta}{2}\right\}\right)< \frac{\delta}{2}.
\end{aligned}
\end{equation*}
{ \bf{Step 2.}} By Lemma \ref{stepa}, for fixed $t\geq 0$, we can find the step function $y^k$, which is arbitrarily close to the c\`adl\`ag function  in supremum norm, i.e.,
\begin{equation*}
\sup_{0 \leq t \leq T}|y^k-\widetilde{y}^k|<\frac{\delta}{2}.
\end{equation*}
Then we have
\begin{equation*}
\begin{aligned}
\left\{d_{\mathbb{R}}(Y^{\varepsilon_n}, y^k)>\delta\right\}& \subseteq
\left\{d_{\mathbb{R}}(Y^{\varepsilon_n}, \widetilde{y}^k)>\frac{\delta}{2}\right\}\bigcup  \left\{d_{\mathbb{R}}(y^{k},
 \widetilde{y}^k)>\frac{\delta}{2}\right\}\\
 & \subseteq
\left\{d_{\mathbb{R}}(Y^{\varepsilon_n}, \widetilde{y}^k)>\frac{\delta}{2}\right\}\bigcup  \left\{\sup|y^{k}-
 \widetilde{y}^k)|>\frac{\delta}{2}\right\},
 \end{aligned}
\end{equation*}
and
\begin{equation*}
\begin{aligned}
\left\{d_{\mathbb{R}}(Y, y^k)>\delta\right\}& \subseteq
\left\{d_{\mathbb{R}}(Y, \widetilde{y}^k)>\frac{\delta}{2}\right\}\bigcup  \left\{d_{\mathbb{R}}(y^{k},
 \widetilde{y}^k)>\frac{\delta}{2}\right\}
  \subseteq \left\{d_{\mathbb{R}}(Y, \widetilde{y}^k)>\frac{\delta}{2}\right\}\bigcup  \left\{\sup|y^{k}-
 \widetilde{y}^k)|>\frac{\delta}{2}\right\}.
 \end{aligned}
\end{equation*}
Therefore we have
\begin{equation*}
\begin{aligned}
&\mathbb{P}\left(\bigcap_{k=1}^N \left\{d_{\mathbb{R}}(Y^{\varepsilon_n},y^{k})>\delta\right\}\right)< \delta, ~~ \forall n \in \mathbb{N}, \\
&\mathbb{P}\left(\bigcap_{k=1}^N \left\{d_{\mathbb{R}}(Y,y^{k})>\delta\right\}\right)< \delta.
\end{aligned}
\end{equation*}

\end{proof}

In what follows, we will fix a $\delta>0$ and let $y^1, y^2, \cdots, y^N$ be the corresponding $N$ step functions in Lemma \ref{appo}. For each $y \in \mathbb{D}([0,T]; \mathbb{R}^{m})$ and $k=1,2,\cdots,N$, we define
\begin{equation*}
\beta_{k}(y):=d_{\mathbb{R}}(y,y^{k}).
\end{equation*}
Let $\psi, \varphi_1, \cdots, \varphi_N : \mathbb{D}\left(\left[0,T\right]; \mathbb{R}^{m}\right)\rightarrow [0,1]$ be smooth mappings such that
\begin{enumerate}
\item[(i)]$\psi(y)+\sum_{k=1}^n\varphi_k(y)=1$, $\forall y \in \mathbb{D}\left([0,T]; \mathbb{R}^{m}\right)$;
\item[(ii)]$\supp \psi \subset \bigcap_{k=1}^N \left\{y; \beta_{k}(y)>\delta\right\}$;
\item[(iii)] $\supp \varphi_k \subset \bigcap_{k=1}^N \{y; \beta_{k}(y)<2\delta\}$, $1\leq k \leq N$.
\end{enumerate}
We shall introduce the following $[0,1]$-valued random variables:
\begin{equation}
\label{nota}
\begin{aligned}
\xi_n&:=\psi(Y^{\varepsilon_n}), ~~~~\xi:=\psi(Y),\\
 \eta^k_n&:=\varphi_k(Y^{\varepsilon_n}), ~~~~\eta^k:=\varphi_k(Y),
 \end{aligned}
\end{equation}
and two measurable sets
\begin{equation*}
\begin{aligned}
\widetilde{A}_{n}&:=\bigcap^N_{k=1}\left\{\omega ; d_{\mathbb{R}}(Y^{\varepsilon_n}(\omega), y^{k})>\delta \right\}, ~~
 \widetilde{A}&:=\bigcap^N_{k=1}\left\{\omega; d_{\mathbb{R}}(Y(\omega), y^{k})>\delta \right\}.
 \end{aligned}
\end{equation*}
In view of Lemma \ref{appo}, we clearly have
\begin{equation*}
\supp\xi_n\subseteq \widetilde{A}_{n},~~~\supp\xi \subseteq \widetilde{A},
\end{equation*}
where for a real-valued random variable $\zeta$, its support $\supp \zeta $ is defined by
 \begin{equation*}
 \supp \zeta:=\left\{\omega \in \Omega; \zeta(\omega)\neq 0 \right\}.
 \end{equation*}
Set
\begin{equation*}
\begin{aligned}
\Lambda_{n}(t)&:= \Gamma_{n}(t) - {\varepsilon_{n}}^{2-r_0} R^{\varepsilon}_u(t_0,t) \Phi_{t_{0}}\left(Y^{\varepsilon_{n}}\right), \\
\Gamma_{n}(t)&:=\left[\phi(Y^{\varepsilon_{n}}_t)-\phi(Y^{\varepsilon_{n}}_{t_{0}})-\int^{t}_{t_0} \left\langle \nabla_y \phi(Y^{\varepsilon_{n}}_s), F(X^{\varepsilon_{n}}_s,Y^{\varepsilon_{n}}_s)\right\rangle ds-\int^{t}_{t_0}\left(-\left(-\Delta_y\right)^{\frac{\alpha}{2}}\phi(Y^{\varepsilon_{n}}_s)\right)ds\right]\Phi_{t_0}(Y^{\varepsilon_{n}}),
\end{aligned}
\end{equation*}
where  $\Phi_{{t_0}}(\cdot)$ is a bounded function on $\mathbb{D}\left(\left[0,T\right]\right)$, which is measurable with respect to the sigma-field $\sigma\left(\omega_{t}, \omega \in \mathbb{D}\left(\left[0,T\right]  \right), 0 \leq t\leq t_0\right)$, $\phi$ is a $C^{\infty}_{0}$ function on $\mathbb{R}^{m}$.

\begin{remark}
Here we introduce $[0, 1]$-valued random variables $\xi_n, ~\xi, ~\eta^{n}_{k}, ~\eta^{k}$ to prove the $L^{1}(\Omega)$-convergence of $\Gamma_{n}(t)$ and give the explicit expression of convergence result for $\Gamma_{n}(t)$.
\end{remark}
For convenience, we  assume that $X^\varepsilon_{t_0}=\mathbf{x}, Y^\varepsilon_{t_0}=\mathbf{y}$.   It follows from the similar arguments used in the proof of tightness in Lemma \ref{pro}, we have
\begin{equation}
\label{r0}
\lim_{{\varepsilon}_{n}\to 0}{\varepsilon_{n}}^{2-r_0} \mathbb{E}_{\mathbf{x},\mathbf{y}}\left[R^{\varepsilon}_{u}(t_0,t)\Phi_{t_{0}}(Y^{\varepsilon})\right]=0,
\end{equation}
and
\begin{equation*}\label{r1}
\lim_{{\varepsilon}_{n}\to 0}{\varepsilon_{n}}^{2-r_0}\sum_{i=1}^{m} \mathbb{E}_{\mathbf{x},\mathbf{y}}\left[\int^{t}_{t_{0}}\left\langle \nabla_{y}\phi(Y^{\varepsilon_{n}}_{s}),G_i(X^{\varepsilon_{n}}_{s}, Y^{\varepsilon_{n}}_{s})\partial_{y_{i}} \widetilde{G}(X^{\varepsilon_{n}}_{s}, Y^{\varepsilon_{n}}_{s})\right \rangle ds \Phi_{t_{0}}(Y^{\varepsilon})\right]=0.
\end{equation*}
 Since the integral $\int^{t}_{0}\int_{\mathbb{R}^m \backslash \{ 0\}} \cdots\widetilde{N}_2(ds,dy)$ in $\mathbb{R}^m$ is a martingale with respect to the $\sigma$-algebras $\mathcal{F}_{t}$ generated by $\left\{ \widetilde{N}_2(s,dy); s\leq t\right\}$, we have
\begin{equation}
\label{r2}
\mathbb{E}_{\mathbf{x},\mathbf{y}}\left[\int^{t}_{t_0}\int_{\mathbb{R}^m\backslash \{ 0\}}\left[\phi(Y^{\varepsilon}_{s-}+ y)-\phi(Y^{\varepsilon}_{s-})\right]\widetilde{N}_2(ds,dy)\Phi_{t_{0}}(Y^{\varepsilon})\right]=0.
\end{equation}

In view of \eqref{fff0} and \eqref{r0}--\eqref{r2},  we gain
\begin{equation}
\label{0Gamma}
\mathbb{E}_{\mathbf{x},\mathbf{y}}\left[\Gamma_n(t)\right] \to 0, ~~n \to \infty.
\end{equation}
Define $\Gamma^{k}_{n}$ as the random variable $\Gamma_{n}$, where $Y^{\varepsilon_{n}}$ is replaced by $y^{k}$. Then we have the following result.
\begin{lemma}
\label{lipc}
For every $\delta >0$  and each positive integral $k=1,2, \cdots, N$, there exists a positive integer $M_0$, such that $|Y^{\varepsilon_n}_t-y^{k}_t|\leq \delta$, we have
\begin{equation*}
\sum_{k=1}^N \mathbb{E}\left[(\Gamma_n(t)-\Gamma^k_n(t))\eta^k_n(t)\right]\leq M_0 \delta.
\end{equation*}
\end{lemma}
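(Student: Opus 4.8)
The plan is to establish the pointwise bound
\begin{equation*}
\big|\Gamma_n(t)-\Gamma^k_n(t)\big|\le C\delta\qquad\text{on }\ \supp\eta^k_n,
\end{equation*}
with $C$ independent of $n$ (it may depend on $\phi$, $F$, $\Phi_{t_0}$, $T$ and on the finitely many step functions $y^1,\dots,y^N$ fixed in Lemma \ref{appo}). Granting this, the partition-of-unity property (i), with $\psi,\varphi_k$ valued in $[0,1]$, gives $\eta^k_n\ge0$ and $\sum_{k=1}^N\eta^k_n=1-\xi_n\le1$, hence $(\Gamma_n(t)-\Gamma^k_n(t))\eta^k_n\le C\delta\,\eta^k_n$ pointwise, so that
\begin{equation*}
\sum_{k=1}^N\mathbb{E}\big[(\Gamma_n(t)-\Gamma^k_n(t))\eta^k_n\big]\le C\delta\,\mathbb{E}\Big[\sum_{k=1}^N\eta^k_n\Big]\le C\delta=:M_0\delta .
\end{equation*}
So everything reduces to the pointwise estimate.

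\emph{Step 1: from Skorokhod closeness to sup/integral control.} On $\supp\eta^k_n$, property (iii) of the partition of unity yields $d_{\mathbb{R}}(Y^{\varepsilon_n},y^k)<2\delta$. Each $y^k$ is a genuine step function, with a \emph{finite} jump set $J_k\subset[0,T]$; choosing the subdivisions defining $y^k$ in Lemma \ref{stepa} so that $t_0,t\notin J_k$, I would upgrade the Skorokhod bound (for $\delta$ below a threshold depending only on the finitely many $y^k$) to: $|Y^{\varepsilon_n}_{t_0}-y^k_{t_0}|+|Y^{\varepsilon_n}_t-y^k_t|\le C\delta$; and $|Y^{\varepsilon_n}_s-y^k_s|\le C\delta$ for all $s$ outside a set $B_k$ which is an $O(\delta)$-neighbourhood of $J_k$, while on $B_k$ only $|Y^{\varepsilon_n}_s-y^k_s|\le C$ holds (all paths concerned lie within Skorokhod distance $2\delta$ of the compact set $K$ of Lemma \ref{appo}, hence in a fixed bounded set). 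Consequently $\int_{t_0}^t\big(|Y^{\varepsilon_n}_s-y^k_s|\wedge1\big)\,ds\le C(T\delta+|B_k|)\le C\delta$.

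\emph{Step 2: term-by-term estimates.} I would then expand $\Gamma_n(t)-\Gamma^k_n(t)$ along the places where $Y^{\varepsilon_n}$ enters \eqref{rew1}. The two boundary terms are bounded by $\|\nabla\phi\|_0\big(|Y^{\varepsilon_n}_t-y^k_t|+|Y^{\varepsilon_n}_{t_0}-y^k_{t_0}|\big)\le C\delta$. For the drift integral, writing $\langle\nabla\phi(a),F(x,a)\rangle-\langle\nabla\phi(b),F(x,b)\rangle=\langle\nabla\phi(a)-\nabla\phi(b),F(x,a)\rangle+\langle\nabla\phi(b),F(x,a)-F(x,b)\rangle$ and using $\phi\in C^\infty_0(\mathbb{R}^m)$ together with Hypothesis $(\mathbf{A_F})$ (boundedness and Lipschitz continuity of $F$), the integrand is $\le C\big(|Y^{\varepsilon_n}_s-y^k_s|\wedge1\big)$, so this term is $\le C\delta$ by Step 1. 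For the nonlocal term, $\phi\in C^\infty_0(\mathbb{R}^m)$ implies $g:=(-\Delta_y)^{\alpha_2/2}\phi\in C^\infty_b(\mathbb{R}^m)$ (bounded, with bounded gradient, hence globally Lipschitz), so $\int_{t_0}^t|g(Y^{\varepsilon_n}_s)-g(y^k_s)|\,ds\le C\delta$ as well. It remains to handle the factor $\Phi_{t_0}$, which sits in both $\Gamma_n$ and $\Gamma^k_n$: collecting the above, $\Gamma_n(t)-\Gamma^k_n(t)=\big(\text{a quantity of size }C\delta\big)\Phi_{t_0}(Y^{\varepsilon_n})+\Psi(y^k)\big(\Phi_{t_0}(Y^{\varepsilon_n})-\Phi_{t_0}(y^k)\big)$, where $\Psi(y^k):=\Gamma^k_n(t)/\Phi_{t_0}(y^k)$ is uniformly bounded (the term is $0$ where $\Phi_{t_0}(y^k)=0$). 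Assuming $\Phi_{t_0}$ Lipschitz on $(\mathbb{D}([0,T]),d_{\mathbb{R}})$ — harmless, since bounded Lipschitz functionals on $\mathbb{D}$ form a determining class, so the general bounded measurable case follows afterwards — the last difference is $\le\omega_\Phi(2\delta)\le C\delta$ on $\supp\eta^k_n$. This gives $|\Gamma_n(t)-\Gamma^k_n(t)|\le C\delta$ on $\supp\eta^k_n$, and the lemma follows from the first paragraph.

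\emph{Main obstacle.} The genuinely delicate part is Step 1: converting $d_{\mathbb{R}}(Y^{\varepsilon_n},y^k)<2\delta$ into the sup/integral bounds, which forces one to isolate the finitely many jump times of the step functions $y^k$, control the Lebesgue measure of the exceptional set $B_k$ on which $Y^{\varepsilon_n}$ and $y^k$ are only $O(1)$-close, and keep $t_0,t$ as continuity points of all $y^k$. There is also a bookkeeping subtlety I would flag: a priori $C$ (equivalently $M_0$) depends, through $|B_k|$, on the number of jumps of the $y^k$, which is why the statement lets $M_0$ depend on $\delta$; a sharper count — using that a compact $K\subset\mathbb{D}([0,T];\mathbb{R}^m)$ has only a uniformly bounded number of ``large'' jumps, the ``small'' ones contributing an extra $O(\delta)$ — shows that $M_0$ may in fact be taken independent of $\delta$, which is what the proof of Theorem \ref{theorem2} ultimately uses. (For a merely bounded measurable $\Phi_{t_0}$ the last estimate degrades to $\le\rho(\delta)$ with $\rho(\delta)\downarrow0$, which is again harmless since $\delta$ is sent to $0$ in the end.)
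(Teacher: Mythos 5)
Your proposal follows essentially the same route as the paper: decompose $\Gamma_n(t)-\Gamma^k_n(t)$ term by term (boundary terms $\phi(Y^{\varepsilon_n}_t)-\phi(Y^{\varepsilon_n}_{t_0})$, drift integral, nonlocal integral) and bound each difference using $\phi\in C^\infty_0$, Hypothesis $(\mathbf{A_F})$, the boundedness of $\Phi_{t_0}$, and the fact that on $\supp\eta^k_n$ the path $Y^{\varepsilon_n}$ lies within Skorokhod distance $2\delta$ of the step function $y^k$. The paper's proof stops at writing out this decomposition and asserting the bound, whereas your Step 1 (upgrading Skorokhod closeness to sup/integral control away from the finitely many jump times of the $y^k$) and your explicit handling of the $\Phi_{t_0}(Y^{\varepsilon_n})-\Phi_{t_0}(y^k)$ difference supply precisely the details the paper omits --- the latter being a genuine issue for merely bounded measurable $\Phi_{t_0}$ that you are right to flag and work around.
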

\begin{proof}
By the definition of the function $\psi$ and $\varphi_k, k=1,2, \cdots, N$ in \eqref{nota} and the equation \eqref{0Gamma}, we have
\begin{equation*}
\mathbb{E}_{\mathbf{x},\mathbf{y}}(\Gamma_{n}\xi_n)+\sum_{k=1}^{N}\mathbb{E}_{\mathbf{x},\mathbf{y}}(\Gamma_{n}\eta^{k}_n)\to 0,  n \to \infty.
\end{equation*}

For each positive integral $k=1,2, \cdots, N$, we have
\begin{equation*}
\begin{aligned}
&\ \mathbb{E}\left[\left(\Gamma_n(t)-\Gamma^k_n(t)\right)\eta^k_{n}(t)\right] \\
=&\ \left\{\mathbb{E}\left[\phi(Y^{\varepsilon_{n}}_{t})
-\phi(Y^{\varepsilon_{n}}_{t_{0}}) \right]\Phi_{t_{0}}(Y^{\varepsilon_{n}})
-\mathbb{E}\left[\phi(y^{k}_{t})-\phi(y^{k}_{t_0})\right]\Phi_{t_{0}}(y^{k})\right\}\\
&\ -\left\{\mathbb{E}\left[\int^{t}_{t_0}\left\langle \nabla_y \phi(Y^{\varepsilon_n}_s), F(X^{\varepsilon_n}_s, Y^{\varepsilon_n}_s)\right\rangle ds
\right]\Phi_{t_{0}}(Y^{\varepsilon_n})
-\mathbb{E}\left[\int^{t}_{t_0}\left\langle \nabla_y \phi(y^{k}_s), F(X^{\varepsilon_n}_s, y^{k}_s)\right\rangle ds\right]\Phi_{t_{0}}(y^{k})\right\} \\
&\ -\left\{\mathbb{E}\left[\int^{t}_{t_{0}}\left(-(-\Delta_y)^{\frac{\alpha_{2}}{2}}\phi(Y^{\varepsilon_n}_s)\right)ds\right]\Phi_{t_{0}}
(Y^{\varepsilon_{n}})
-\mathbb{E}\left[\int^{t}_{t_0}\left(-(-\Delta_y)^{\frac{\alpha_{2}}{2}}\phi(y^{k}_s)\right)ds\right]\Phi_{t_{0}}(y^{k})\right\}.
\end{aligned}
\end{equation*}
By $\phi\in C^{\infty}_{0}$, Hypothesis ($\bf{A_{F}}$) and the boundedness of $\Phi_{t_{0}}(\cdot)$, we get the required result.
\end{proof}

Set
\begin{equation*}
\Gamma(t)=\left[\phi(Y_t)-\phi(Y_{t_{0}})-\int^{t}_{t_0} \left\langle \nabla_y \phi(Y_s), {\bar{F}}(Y_s)\right\rangle ds-\int^{t}_{t_0}\left(-\left(-\Delta_y\right)^{\frac{\alpha}{2}}\phi(Y_s)\right)ds\right]\Phi_{t_0}(Y),
\end{equation*}
where the function ${\bar{F}}(y)$ is defined by
\begin{equation*}
\bar{F}(y)=\int_{\mathbb{R}^n} F(x,y)\mu^{y}(dx).
\end{equation*}
Define $\Gamma^k$ as the quantity obtained by replacing $Y$ by $y^k$ in the expression for $\Gamma$. By the same technique as Lemma \ref{lipc}, we also get the following corollary.
\begin{corollary}
 For every $\delta >0$, there exists a positive integer $\widehat{{M}}$, such that
\begin{equation*}
\sum_{k=1}^N \mathbb{E}\left[(\Gamma(t)-\Gamma^k(t))\eta^k(t)\right]\leq \widehat{M} \delta.
\end{equation*}
\end{corollary}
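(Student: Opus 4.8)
The plan is to repeat the argument of Lemma~\ref{lipc} almost verbatim, the only changes being that the weak limit $Y$ replaces $Y^{\varepsilon_n}$ and the homogenized drift $\bar F(\cdot)$ replaces the frozen drift $F(X^{\varepsilon_n}_s,\cdot)$. Two structural observations will drive the estimate. First, since $y^k$ is a \emph{deterministic} step function, the quantity $\Phi_{t_0}(y^k)$ is a constant and hence $\Gamma^k(t)$ is a (deterministic) constant carrying no randomness. Second, by the support condition on $\varphi_k$ together with Lemma~\ref{appo}, the random variable $\eta^k(t)=\varphi_k(Y)$ vanishes outside the event $\{\beta_k(Y)<2\delta\}=\{d_{\mathbb R}(Y,y^k)<2\delta\}$, on which the c\`adl\`ag path $Y$ is within Skorokhod distance $2\delta$ of $y^k$.

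First I would write, for each $k=1,\dots,N$, the same three-term decomposition as in Lemma~\ref{lipc}:
\begin{equation*}
\begin{aligned}
\Gamma(t)-\Gamma^k(t)&=\Big\{\big[\phi(Y_t)-\phi(Y_{t_0})\big]\Phi_{t_0}(Y)-\big[\phi(y^k_t)-\phi(y^k_{t_0})\big]\Phi_{t_0}(y^k)\Big\}\\
&\quad-\Big\{\int_{t_0}^t\langle\nabla_y\phi(Y_s),\bar F(Y_s)\rangle\,ds\,\Phi_{t_0}(Y)-\int_{t_0}^t\langle\nabla_y\phi(y^k_s),\bar F(y^k_s)\rangle\,ds\,\Phi_{t_0}(y^k)\Big\}\\
&\quad-\Big\{\int_{t_0}^t\big(-(-\Delta_y)^{\frac{\alpha_2}{2}}\phi(Y_s)\big)ds\,\Phi_{t_0}(Y)-\int_{t_0}^t\big(-(-\Delta_y)^{\frac{\alpha_2}{2}}\phi(y^k_s)\big)ds\,\Phi_{t_0}(y^k)\Big\}.
\end{aligned}
\end{equation*}
On $\supp\eta^k(t)$ I would then estimate the three braces separately. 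Since $\phi\in C_0^\infty(\R^m)$, the functions $\phi$, $\nabla_y\phi$ and $(-\Delta_y)^{\alpha_2/2}\phi$ are bounded and uniformly continuous; combining this with the step-function structure of $y^k$ (finitely many jumps) and Lemma~\ref{stepa}, the Skorokhod proximity $d_{\mathbb R}(Y,y^k)<2\delta$ upgrades to genuine closeness of $\phi(Y_\cdot)$ to $\phi(y^k_\cdot)$ and of the corresponding integrands, up to an error controlled by the modulus of continuity evaluated at $O(\delta)$. Using further that $\bar F$ is bounded by $K_1$ and Lipschitz, and that $\Phi_{t_0}$ is bounded, each brace is bounded on $\supp\eta^k(t)$ by $C\delta$, with $C=C(T,\phi,K_1,\Phi_{t_0})$ independent of $k$, of $n$, and of the subdivision.

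Finally, since $0\le\eta^k(t)\le1$, $\sum_{k=1}^N\eta^k(t)\le1$, and $\eta^k(t)$ is supported where $|\Gamma(t)-\Gamma^k(t)|\le C\delta$, one gets $\sum_{k=1}^N(\Gamma(t)-\Gamma^k(t))\eta^k(t)\le C\delta\sum_{k=1}^N\eta^k(t)\le C\delta$ pointwise, and taking expectations yields $\sum_{k=1}^N\mathbb E[(\Gamma(t)-\Gamma^k(t))\eta^k(t)]\le C\delta$, so it suffices to take $M=\lceil C\rceil$. The two points that require genuine work, rather than being routine, are: (a) the bookkeeping that converts proximity in the Skorokhod metric (which allows a time reparametrisation) into the pointwise bounds used above — here one exploits that $y^k$ is piecewise constant, so the time change is immaterial away from its finitely many jump times; and (b) the Lipschitz continuity of $\bar F(y)=\int_{\R^n}F(x,y)\mu^y(dx)$, which I would obtain by splitting $\bar F(y_1)-\bar F(y_2)$ into $\int(F(x,y_1)-F(x,y_2))\mu^{y_1}(dx)$, estimated by Hypothesis~($\bf{A_{F}}$), and $\int F(x,y_2)(\mu^{y_1}-\mu^{y_2})(dx)$, estimated via the $y$-continuity of the invariant law that follows from Lemma~\ref{xde}, Lemma~\ref{ergodicity0} and Proposition~\ref{ergodic11}. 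I expect (a) to be the main obstacle, since the averaging of the fast drift into $\bar F$ has essentially already been absorbed in the earlier limit $\mathbb E[\Gamma_n(t)]\to0$.
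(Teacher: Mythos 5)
Your proposal is correct and follows essentially the same route as the paper, which proves this corollary simply by invoking ``the same technique as Lemma \ref{lipc}'': the three-term decomposition of $\Gamma(t)-\Gamma^k(t)$, the support condition on $\eta^k$, and the boundedness of $\phi$, its derivatives, $\bar F$ and $\Phi_{t_0}$. Your additional care about the Skorokhod-versus-uniform metric near the jump times of $y^k$ and about the Lipschitz continuity of $\bar F$ only fills in details the paper leaves implicit.
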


Introduce a new auxiliary process $\widetilde{X}^{\varepsilon, x,y}_t$, which satisfies the following stochastic differential equation
\begin{equation}
\label{sde23}
d\widetilde{X}^{\varepsilon,x, y}_t=b\left({\widetilde{X}}^{\varepsilon, x, y}_t,y\right)dt+d\widetilde{L}^{\varepsilon, \alpha_1}_t, ~~\widetilde{X}^{\varepsilon,x,y}_0=x,
\end{equation}
where $\widetilde{L}^{\varepsilon, \alpha_1}_t$ is defined by
\begin{equation*}
\widetilde{L}^{\varepsilon,\alpha_1}_t=\frac{1}{\varepsilon^{\frac{2}{\alpha_1}}}L^{\alpha_1}_{\varepsilon^2 t}.
\end{equation*}
Obviously, the process $\widetilde{L}^{\varepsilon,\alpha_1}_t$ is also an $\alpha$-stable process, with the same law as $L^{\alpha_1}_t$.

%
%

Next, we will give the exponential ergodicity for the equation \eqref{sde23}. Using the similar technique as \eqref{xde} and Proposition \ref{ergodic11},  we have

\begin{proposition}
Under Hypothesis ($\bf{A_{b}}$), for each function $\widetilde{\varphi} \in C^{1}_{b}$, there exists a positive constant $C$ such that for all $t \geq 0$ and $x \in \mathbb{R}^n$, we have
\begin{equation*}
\sup_{y \in \mathbb{R}^{m}}|P^{\varepsilon, x,y}_t\widetilde{\varphi}(x)-\mu^{y}(\widetilde{\varphi})|\leq C\|\widetilde{\varphi}\|_{1}e^{-\frac{\gamma t}{4}}(1+|x|^{\frac{1}{2}}),
\end{equation*}
where
\begin{equation*}
P^{\varepsilon,x,y}_{t}\widetilde{\varphi}(x)=\mathbb{E}\widetilde{\varphi}(\widetilde{X}^{\varepsilon, x,y}_t),
\end{equation*}
and the positive constant $C$ is independent of $\varepsilon$.
\end{proposition}

Now, we are in the position to prove the $L^{1}(\Omega)$-convergence of $\Gamma^{k}_{n}(t)$.

\begin{lemma}
Let $K\in \mathcal{C}^{1,0}_{b}$ and $\bar{K}(y):=\int_{\mathbb{R}^n} K(x,y)\mu^{y}(dx)$, then for every $0<t<T$, we have
\begin{equation*}
\mathbb{E}\left|\int ^t_0\left(K(X^{\varepsilon_n}_s, y^k_s)-\bar{K}(y^k_s)\right)ds\right|\rightarrow 0, ~~as~~ n\rightarrow \infty.
\end{equation*}
\end{lemma}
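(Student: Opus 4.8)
The plan is to exploit the fact that $y^k$ is a fixed (deterministic) step function, so that on each subinterval of constancy the quantity $K(X^{\varepsilon_n}_s,y^k_s)-\bar K(y^k_s)$ reduces to a time average of $K(\cdot,v)-\bar K(v)$ along the fast process with a frozen second argument $v$, and the averaging is governed by the exponential ergodicity of Proposition~\ref{ergodic11}. First I would write $0=s^0<s^1<\cdots<s^J=T$ for the mesh of the step function $y^k$, with $y^k_s\equiv v_j$ on $[s^{j-1},s^j)$, so that
\begin{equation*}
\mathbb{E}\left|\int_0^t \left(K(X^{\varepsilon_n}_s,y^k_s)-\bar K(y^k_s)\right)ds\right|
\le \sum_{j=1}^{J}\mathbb{E}\left|\int_{s^{j-1}\wedge t}^{s^j\wedge t}\left(K(X^{\varepsilon_n}_s,v_j)-\bar K(v_j)\right)ds\right|,
\end{equation*}
and it suffices to estimate each summand.

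Second, I would introduce the time rescaling $X^{\varepsilon_n}_{s\varepsilon_n^2}$ as in the proof of Lemma~\ref{corollary1}: on each block the rescaled fast process is close, over long macroscopic times, to the frozen equation \eqref{itro} with parameter $v_j$. More precisely, one splits the block integral into sub-blocks of length $\Delta$ and, on each sub-block $[a,a+\Delta]$, compares $\frac1\Delta\int_a^{a+\Delta}K(X^{\varepsilon_n}_s,v_j)\,ds$ with $\mathbb{E}\big[K(X^{a,v_j,\,\text{frozen}}_{\cdot},v_j)\big]$ conditioned on $\mathcal F_a$, using the Markov property of $(X^{\varepsilon_n},Y^{\varepsilon_n})$ and the Lipschitz continuity of $K$ in the first variable together with Lemma~\ref{xde} to control the error of freezing the slow argument inside the block. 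Then Proposition~\ref{ergodic11} gives, for the truly frozen dynamics,
\begin{equation*}
\left|\mathbb{E}\,K(X^{x,v_j}_{\tau},v_j)-\bar K(v_j)\right|\le C\|K\|_{1}e^{-\gamma\tau/4}\left(1+|x|^{1/2}\right),
\end{equation*}
so that after the rescaling the decaying exponential kills the contribution as $\varepsilon_n\to0$, provided the moment bound $\mathbb{E}\sup_{s\le T}|X^{\varepsilon_n}_s|^{1/2}$ from Lemma~\ref{corollary1} is used to absorb the $|x|^{1/2}$ factor (note $\varepsilon_n^{r_1}\mathbb{E}\sup|X^{\varepsilon_n}|^{p}\to0$). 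Choosing $\Delta=\Delta(\varepsilon_n)\to0$ slowly enough — say so that $\Delta/\varepsilon_n^2\to\infty$ — balances the freezing error (of order $\Delta$ times a moment bound) against the ergodic error (of order $\Delta^{-1}\varepsilon_n^2$ times the time horizon, up to the exponential gain), and one concludes each block integral tends to $0$.

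The main obstacle I anticipate is controlling the growth in $x$ cleanly: because the ergodic estimate in Proposition~\ref{ergodic11} carries an unbounded factor $1+|x|^{1/2}$ and the fast process $X^{\varepsilon_n}$ has moments blowing up like $\varepsilon_n^{-2p/\alpha_1}$, one must be careful that the product of the exponential ergodic decay (after rescaling time by $\varepsilon_n^{-2}$, i.e.\ $e^{-\gamma\Delta/(4\varepsilon_n^2)}$-type gains) genuinely beats these polynomial-in-$\varepsilon_n^{-1}$ moment factors; this is exactly the role of the restriction $1\le p<\alpha_1$ and the bookkeeping in Lemma~\ref{corollary1}. A secondary technical point is that $X^{\varepsilon_n}$ is coupled to $Y^{\varepsilon_n}$, so the "frozen" comparison is not literally to \eqref{itro} but to its version with the slow variable held at the left endpoint of the sub-block; Lemma~\ref{xde} (continuity of $X^{x_1,y_1}-X^{x_2,y_2}$ in the initial data and in $y$) is what makes this substitution legitimate with an error controlled by $\sup_{[a,a+\Delta]}|Y^{\varepsilon_n}_s-Y^{\varepsilon_n}_a|$, which is small in probability by the tightness estimate (ii) of Lemma~\ref{pro}. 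Collecting the three error terms — freezing-in-$y$, ergodic decay, and moment growth — and summing the finitely many blocks $j=1,\dots,J$ yields the claim.
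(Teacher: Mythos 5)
Your plan is sound, but it follows a genuinely different route from the paper. You use the classical Khasminskii two-scale discretization: after restricting to a constancy interval of the step function $y^k$, you partition it into sub-blocks of intermediate length $\Delta$ with $\varepsilon_n^2\ll\Delta\ll 1$, freeze the dynamics at the left endpoint of each sub-block via the Markov property, apply the ergodic estimate of Proposition~\ref{ergodic11} on each sub-block, and balance the freezing error against the equilibration error. The paper instead works with a single global comparison on each constancy block: it rescales time by $s\mapsto s\varepsilon_n^2$, shows $\mathbb{E}|Y^{\varepsilon_n}_{u\varepsilon_n^2}-y|\to0$ and then $\mathbb{E}|X^{\varepsilon_n}_{u\varepsilon_n^2}-X^{x,y}_u|\to0$ by a Gr\"onwall/dissipativity argument over the whole rescaled block, and finally integrates the exponential ergodic bound $C\|K\|_{1,0}e^{-\gamma s}(1+|x|^{1/2})$ over $[a_k/\varepsilon_n^2,\,b_k/\varepsilon_n^2]$, where the prefactor $\varepsilon_n^2$ kills the remaining contribution. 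Your approach buys robustness: it does not require the long-time comparison between $X^{\varepsilon_n}_{\cdot\varepsilon_n^2}$ and a single frozen trajectory to hold uniformly over a time window of length $O(\varepsilon_n^{-2})$, which in the paper rests on the dissipativity of $b$ in $x$; the paper's approach buys brevity and avoids introducing the auxiliary scale $\Delta$. One point you should make explicit if you write this up: the relevant "freezing" error is not only $\sup_{[a,a+\Delta]}|Y^{\varepsilon_n}_s-Y^{\varepsilon_n}_a|$ but the discrepancy between the actual slow argument $Y^{\varepsilon_n}_s$ entering the drift $b(\cdot,Y^{\varepsilon_n}_s)$ and the fixed step value $v_j=y^k_s$ at which both $K$ and the invariant measure $\mu^{v_j}$ (hence $\bar K(v_j)$) are evaluated; the comparison process must be the frozen equation with parameter $v_j$, so you need $Y^{\varepsilon_n}_s$ close to $v_j$ (the paper handles the analogous point by proving $\mathbb{E}|Y^{\varepsilon_n}_{u\varepsilon_n^2}-y|\to0$ and using Lemma~\ref{xde} to transfer the dependence on the frozen parameter). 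With that adjustment your argument closes.
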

\begin{proof}
Let $(a_k, b_k)\subseteq [0,T]$ be an interval on which $y^k_{\cdot}$ is a constant, denoted by $z^k$. This can be done by Lemma \ref{stepa}. Then we will only show
\begin{equation*}
\mathbb{E}\left|\int ^{b_k}_{a_k}\left[K(X^{\varepsilon_n}_s, z^k)-\bar{K}(z^k)\right]ds\right|\rightarrow 0, ~~as~~ n\rightarrow \infty.
\end{equation*}
By the equation \eqref{slo0}, we know that
\begin{equation*}
\begin{aligned}
X^{\varepsilon_n}_{t\varepsilon^2_n}&=x+\int^{t}_{0}b(X^{\varepsilon_n}_{u\varepsilon^2_n}, Y^{\varepsilon_n}_{u\varepsilon^2_n})du+\frac{1}{\varepsilon_n^{\frac{2}{\alpha_1}}}L^{\alpha_1}_{t\varepsilon^2_n}, \\
Y^{\varepsilon_n}_{t\varepsilon^2_n}&=y+\varepsilon_n^2\int^{t}_{0}F(X^{\varepsilon_n}_{u\varepsilon^2_n}, Y^{\varepsilon_n}_{u\varepsilon^2_n})du+\varepsilon^{2-r_0}_n\int^{t}_{0}G(X^{\varepsilon_n}_{u\varepsilon^2_n}, Y^{\varepsilon_n}_{u\varepsilon^2_n})du+L^{\alpha_2}_{t\varepsilon^2_n}.
\end{aligned}
\end{equation*}
Then by Hypotheses ($\bf{A_{F}}$),($\bf{A_{G1}}$) and  the inequality (2.8) in \cite{czhang},  we have
\begin{equation}
\label{ylabel}
\mathbb{E}\left|Y^{\varepsilon_n}_{t\varepsilon^2_n }-y\right| \to 0, ~~ n \to \infty.
\end{equation}
Therefore we have
\begin{equation*}
\begin{aligned}
\mathbb{E}\left|X^{\varepsilon_n}_{t\varepsilon^2_n}-\widetilde{X}^{\varepsilon,x, y}_t\right|&=\mathbb{E}\left|\int^{t}_{0}\left[b(X^{\varepsilon_n}_{u\varepsilon^2_n}, Y^{\varepsilon_n}_{u\varepsilon^2_n})\right]du-\int^{t}_{0}\left[b(\widetilde{X}^{\varepsilon,x, y}_u, y)\right]du\right|\\
& \leq \|\nabla_x b\|_{0}\int^t_{0}\mathbb{E}|X^{\varepsilon_n}_{\varepsilon^2_nu}-\widetilde{X}^{\varepsilon,x, y}_u|ds +\|\nabla_y b\|_{0}\int^t_{0}\mathbb{E}|Y^{\varepsilon_n}_{u\varepsilon^2_n}-y|du.
\end{aligned}
\end{equation*}
By Gr\"onwall's inequality and \eqref{ylabel}, we have
\begin{equation*}
\mathbb{E}\left|X^{\varepsilon_n}_{t\varepsilon^2_n}-\widetilde{X}^{\varepsilon,x,y}_t\right| \to 0, ~~n \to \infty.
\end{equation*}
On the other hand, by $K\in \mathcal{C}^{1,0}_{b}$ and Proposition \ref{ergodic11}, we know

\begin{equation*}
\begin{aligned}
&\ \mathbb{E}\left|\int^{b_k}_{a_k}\left[K(X^{\varepsilon_n}_s, z^k)-\bar{K}(z^k)\right]ds\right| = \varepsilon^2_n\mathbb{E}\left|\int^{b_k/{\varepsilon^2_n}}_{a_k/{\varepsilon^2_n}}\left[K(X^{\varepsilon_n}_{s{\varepsilon^2_n}}, z^k)-\bar{K}(z^k)\right]ds\right|\\
\leq &\ \varepsilon^2_n\mathbb{E}\left|\int^{b_k/{\varepsilon^2_n}}_{a_k/{\varepsilon^2_n}}
\left[K(X^{\varepsilon_n}_{s{\varepsilon^2_n}}, z^k)-K(\widetilde{X}^{\varepsilon,x,z^k}_s, z^k)\right]ds\right| \\
&\ +\varepsilon^2_n\left|\int^{b_k/{\varepsilon^2_n}}_{a_k/{\varepsilon^2_n}}\mathbb{E}\left( K(\widetilde{X}^{\varepsilon,x,z^k}_s, z^k)\right)ds-\int^{b_k/{\varepsilon^2_n}}_{a_k/{\varepsilon^2_n}}\int_{\mathbb{R}^n}K(x,z^k)\mu^{z^k}(dx)ds\right|\\
\leq &\ \varepsilon^2_n \|\nabla_x k\|_{0}\mathbb{E}\left[\int^{b_k/{\varepsilon^2_n}}_{a_k/{\varepsilon^2_n}}
|X^{\varepsilon_n}_{s{\varepsilon^2_n}}-\widetilde{X}^{\varepsilon,x,z^k}_s| ds\right]+\varepsilon^2_n\int^{b_k/{\varepsilon^2_n}}_{a_k/{\varepsilon^2_n}}C\|K\|_{1,0}e^{-\gamma s}\left(1+|x|^{\frac{1}{2}}\right)ds \\
\to&\ 0, ~~n \to \infty.
\end{aligned}
\end{equation*}

\end{proof}
\subsubsection{\bf{Proof of Theorem 1.}}
Now, we are in the position to give :

\noindent Proof of Theorem 1. We divide the proof into the following two steps.\\
{ \bf{Step 1.}} By Lemma \ref{pro}, there exists a subsequence $\{\varepsilon_n\}\rightarrow 0$, such that $Y^{\varepsilon_n} $ converges weakly to a limit point $Y$, as $n\rightarrow \infty$.

For any $p, q>1$ and $\frac{1}{p}+\frac{1}{q}=1$, by H\"older inequality, Lemma \ref{appo} and $\phi\in C^{\infty}_{0}$, we have
\begin{equation}
\label{steg}
\begin{aligned}
|\mathbb{E}\left(\Gamma_n(t)\xi_n(t)\right)|&\leq \left(\mathbb{E}(\Gamma_n(t))^p\right)^{1/p}\left(\mathbb{P}(\widetilde{A}_n(t))\right)^{1/q}\leq C\delta^{1/q}, \\
|\mathbb{E}\left(\Gamma(t)\xi(t)\right)|&\leq \left(\mathbb{E}(\Gamma(t))^p\right)^{1/p}\left(\mathbb{P}(\widetilde{A}(t))\right)^{1/q}\leq C\delta^{1/q}.
\end{aligned}
\end{equation}
Since  $\eta^k_n\Rightarrow \eta^k$ and $\eta^k_n\leq 1$. Thus for each $k=1,2, \cdots, N$, we have
\begin{equation}
\begin{aligned}
\label{res1}
\left|\mathbb{E}\left(\Gamma^n_k\eta^k_n-\Gamma^k\eta^k\right)\right|&\leq \mathbb{E}\left|\left(\Gamma^n_k-\Gamma^k\right)\eta^k_n\right|+|\eta^k|\left|\mathbb{E}\left(\eta^k_n-\eta^k\right)\right|
 \leq \mathbb{E}\left|\Gamma^n_k-\Gamma^k\right|+\mathbb{E}\left|\eta^k_n-\eta^k\right|\times \Gamma^k.
\end{aligned}
\end{equation}
{ \bf{Step 2.}} By the definitions of $\psi$ and $\varphi$, we yield
\begin{equation*}
\begin{aligned}
&\mathbb{E}\left(\Gamma_n \xi_{n}(t)\right)+\sum_{k=1}^{N}\mathbb{E}_{x,y}\left(\Gamma_n \eta^{k}_{n}(t)\right)\rightarrow 0, ~~as~~ n\rightarrow \infty,~~
\mathbb{E}(\Gamma)=\mathbb{E}(\Gamma\xi_t)+\sum_{k=1}^{N}\mathbb{E}_{x,y}\left(\Gamma\eta^{k}_t\right),
\end{aligned}
\end{equation*}
and
\begin{equation}
\label{sep}
\begin{aligned}
\sum_{k=1}^{N}\mathbb{E}(\Gamma_n\eta^{k}_{n})&=\sum_{k=1}^N\mathbb{E}\left[(\Gamma_n-\Gamma^k_n)\eta^n_k\right]+\sum_{k=1}^N \mathbb{E}(\Gamma^k_n\eta^k_n), \\
\sum_{k=1}^{N}\mathbb{E}(\Gamma\eta^{k})&=\sum_{k=1}^N\mathbb{E}\left[(\Gamma-\Gamma^k)\eta_k\right]
+\sum_{k=1}^N \mathbb{E}(\Gamma^k\eta^k).
\end{aligned}
\end{equation}
By Lemma \ref{lipc}, \eqref{steg}, \eqref{res1} and \eqref{sep}, we get
\begin{equation}
\label{conver}
\mathbb{E}[\Gamma_n]\rightarrow \mathbb{E}[\Gamma], ~~ \text{as}~~ n \rightarrow \infty.
\end{equation}
Combining \eqref{0Gamma} and \eqref{conver}, we have
\begin{equation*}
\mathbb{E}\left[\left(\phi(Y_t)-\phi(Y_{t_0})-\int^{t}_{t_0}\mathcal{L}_2\phi(Y_s)ds\right)\Phi_{t_{0}}(Y_u)\right]=0,
\end{equation*}
where
\begin{equation*}
\mathcal{L}_2\phi(y) = \left\langle \nabla_y \phi(y), {\bar{F}}(y)\right\rangle -\left(-\Delta_y\right)^{\frac{\alpha}{2}}\phi(y).
\end{equation*}

\section{Concluding remarks.}\label{sec-5}

In this paper, we study the weak averaging principle for multiscale systems
driven by $\alpha$-stable noises. First, we examine the existence of the nonlocal Poisson
equation corresponding to an ergodic jump
process. Then by constructing suitable correctors, we obtain the tightness of the slow component. It turns out that the slow component weakly converges to a jump process as the scale parameter $\varepsilon$  goes to zero.

There are some limitations to this paper. Firstly, the condition $1<\alpha_i <2, ~~i=1,2$ plays an important role in deriving the effective dynamical system. How to obtain the effective low dimensional system and to estimate the effects that the fast components have on slow ones are still open for the case $\alpha_i\in (0,1)$. Secondly, the slow components contain homogenization terms, whose homogenizing index $r_0$ has a relation with the stable index $\alpha_1$ of the noise of fast components given by $0<r_0<1-1/{\alpha_1}$. How to relax the restriction for the homogenizing index is also an active issue.
Thirdly, the above multiscale stochastic dynamical systems are driven by additive stable L\'evy noises.
It is also interesting to consider the effective dynamics of multiscale stochastic dynamical systems driven by multiplicative noises. Finally, it is worth emphasizing that the estimate \eqref{est-1} depends on the variable $x$, so the classical semigroup method is no longer suitable. Therefore, for such a case, it is necessary to find some new approaches to study. In future works, we will examine the uniqueness and regularity of nonlocal elliptic equations under the assumption of exponential ergodicity, as well as the central limit theorem for multiscale systems with homogenization terms. Further, we will also study the dependence of the convergence rate on the regularity of the coefficients of the slow component.

\section{Appendix A. Further Proofs.}\label{app-1}

\subsection{Proof of Lemma \ref{lemma-3}.}

\begin{proof}[Proof of \eqref{xde}]

By the equation \eqref{itro}, we have
\begin{equation*}
d\left(X^{x_1,y_1}_t-X^{x_2,y_2}_t\right)=\left[b(X^{x_1,y_1}_t,y_1)-b(X^{x_2,y_2}_t,y_2)\right] dt, ~~X^{x_1,y_1}_0-X^{x_2,y_2}_0=x_1-x_2.
\end{equation*}
Multiplying both sides by $2\left(X^{x_1,y_1}_t-X^{x_2,y_2}_t\right)$, by Assumption ($\bf{A_{b}}$) and Young's inequality, we have
\begin{equation*}
\begin{aligned}
\frac{d}{dt}\left|X^{x_1,y_1}_t-X^{x_2,y_2}_t\right|^2&=2\left\langle b(X^{x_1,y_1}_t, y_1)-b(X^{x_2,y_2}_t, y_2), X^{x_1,y_1}_t-X^{x_2,y_2}_t \right\rangle \\
& \leq 2\left\langle b(X^{x_1,y_1}_t, y_1)-b(X^{x_2,y_2}_t, y_1), X^{x_1,y_1}_t-X^{x_2,y_2}_t \right\rangle \\
&\quad +2\left\langle b(X^{x_2,y_2}_t, y_1)-b(X^{x_2,y_2}_t, y_2), X^{x_1,y_1}_t-X^{x_2,y_2}_t \right\rangle \\
& \leq -2\gamma\left|X^{x_1,y_1}_t-X^{x_2,y_2}_t\right|^2+C(\|\nabla_y b\|_0, \gamma)|y_1-y_2|\left|X^{x_1,y_1}_t-X^{x_2,y_2}_t\right|\\
& \leq - \gamma \left|X^{x_1,y_1}_t-X^{x_2,y_2}_t\right|^2+C(\|\nabla_y b\|_0,\gamma)|y_1-y_2|^2.
\end{aligned}
\end{equation*}
Hence, the comparison theorem yields that
\begin{equation*}
\left|X^{x_1,y_1}_t-X^{x_2,y_2}_t\right|^2 \leq e^{-\frac{\gamma}{2}t}|x_1-x_2|^2 +C(\|b\|_1,\gamma)|y_1-y_2|^2.
\end{equation*}

\end{proof}

\begin{proof}[Proof of \eqref{xde1}]

Note that
\begin{equation*}
d\left(\nabla_{y}X^{x,y}_t\right)=(\nabla_{x}b)(X^{x,y}_t,y)\nabla_{y}X^{x,y}_tdt +(\nabla_{y}b)(X^{x,y}_t,y)dt, ~~\nabla_{y}X^{x,y}_0=\bf{0}.
\end{equation*}
This implies that
\begin{equation*}
\begin{aligned}
d(\nabla_{y}X^{x_1,y_1}_t-\nabla_{y}X^{x_2,y_2}_t)= &\left((\nabla_{x}b)(X^{x_1,y_1}_t,y_1)\nabla_{y}X^{x_1,y_1}_t-(\nabla_{x}b)(X^{x_2,y_2}_t,y_2)\nabla_{y}X^{x_2,y_2}_t\right)dt\\ &+((\nabla_{y}b)(X^{x_1,y_1}_t,y_1)-(\nabla_{y}b)(X^{x_2,y_2}_t,y_2))dt.
\end{aligned}
\end{equation*}
Multiplying both sides by $2(\nabla_{y}X^{x_1,y_1}_t-\nabla_{y}X^{x_2,y_2}_t)$, we have
\begin{equation*}
\begin{aligned}
&\ \frac{d}{dt}|\nabla_{y}X^{x_1,y_1}_t-\nabla_{y}X^{x_2,y_2}_t|^2
\\ =&\ 2\langle (\nabla_{x}b)(X^{x_1,y_1}_t,y_1)\nabla_{y}X^{x_1,y_1}_t- (\nabla_{x}b)(X^{x_2,y_2}_t,y_2)\nabla_{y}X^{x_2,y_2}_t, \nabla_{y}X^{x_1,y_1}_t-\nabla_{y}X^{x_2,y_2}_t\rangle
\\&+ 2\langle (\nabla_{y}b)(X^{x_1,y_1}_t,y_1)-(\nabla_{y}b)(X^{x_2,y_2}_t,y_2), \nabla_{y}X^{x_1,y_1}_t-\nabla_{y}X^{x_2,y_2}_t\rangle
\\=:&\ \Sigma_1+\Sigma_2.
\end{aligned}
\end{equation*}
For the term $\Sigma_1$, observe that by substituting $x_2=x_1+\varepsilon h$ and letting $\varepsilon \rightarrow 0$ in the dissipative condition \eqref{0disb}, we have for all $x, h \in \mathbb{R}^{n}, y\in \mathbb{R}^{m}$,
\begin{equation}
\label{disv}
\langle \nabla_{x}b(x,y)h, h\rangle \leq -\gamma |h|^2.
\end{equation}
Therefore, we have
\begin{equation*}
\begin{aligned}
\Sigma_1
& \leq 2\left\langle (\nabla_{x}b)(X^{x_1,y_1}_t,y_1)\nabla_{y}X^{x_1,y_1}_t- (\nabla_{x}b)(X^{x_1,y_1}_t,y_1)\nabla_{y}X^{x_2,y_2}_t ,\nabla_{y}X^{x_1,y_1}_t-\nabla_{y}X^{x_2,y_2}\right \rangle
\\&\quad +  2|(\nabla_{x}b)(X^{x_1,y_1}_t,y_1)\nabla_{y}X^{x_2,y_2}_t- (\nabla_{x}b)(X^{x_1,y_1}_t,y_2)\nabla_{y}X^{x_2,y_2}_t |\nabla_{y}X^{x_1,y_1}_t-\nabla_{y}X^{x_2,y_2}_t|
\\&\quad + 2|(\nabla_{x}b)(X^{x_1,y_1}_t,y_2)\nabla_{y}X^{x_2,y_2}_t- (\nabla_{x}b)(X^{x_2,y_2}_t,y_2)\nabla_{y}X^{x_2,y_2}_t| |\nabla_{y}X^{x_1,y_1}_t-\nabla_{y}X^{x_2,y_2}_t|
\\& \leq -2\gamma |\nabla_{y}X^{x_1,y_1}_t-\nabla_{y}X^{x_2,y_2}_t|^2
 + 2\|\nabla_{y}\nabla_{x}b\|_0 |\nabla_{y}X^{x_2,y_2}_t| |y_1-y_2| |\nabla_{y}X^{x_1,y_1}_t-\nabla_{y}X^{x_2,y_2}_t|
\\&\quad + 2\|\nabla_{x}^2b\|_0 |\nabla_{y}X^{x_2,y_2}_t| |X^{x_1,y_1}_t-X^{x_2,y_2}_t| |\nabla_{y}X^{x_1,y_1}_t-\nabla_{y}X^{x_2,y_2}_t|.
\end{aligned}
\end{equation*}
For the term $\Sigma_2$, we have
\begin{equation*}
\begin{aligned}
\Sigma_2 &\leq 2|(\nabla_{y}b)(X^{x_1,y_1}_t,y_1)-(\nabla_{y}b)(X^{x_1,y_1}_t,y_2)| |\nabla_{y}X^{x_1,y_1}_t-\nabla_{y}X^{x_2,y_2}_t|
\\&\quad + 2|(\nabla_{y}b)(X^{x_1,y_1}_t,y_2)-(\nabla_{y}b)(X^{x_2,y_2}_t,y_2)| |\nabla_{y}X^{x_1,y_1}_t-\nabla_{y}X^{x_2,y_2}_t|
\\& \leq 2 \|\nabla_{y}^2b\|_0 |y_1-y_2| |\nabla_{y}X^{x_1,y_1}_t-\nabla_{y}X^{x_2,y_2}_t|
\\&\quad + 2\|\nabla_{y}\nabla_{x}b\|_0 |X^{x_1,y_1}_t-X^{x_2,y_2}_t| |\nabla_{y}X^{x_1,y_1}_t-\nabla_{y}X^{x_2,y_2}_t|.
\end{aligned}
\end{equation*}
Obviously,  \eqref{xde} implies that
\begin{equation}
\label{deryb}
\sup_{t\ge0,x\in\mathbb{R}^{n},y\in\mathbb{R}^{m}}|\nabla_y X^{x,y}_t| \leq C(\|b\|_1,\gamma).
\end{equation}
Hence, by the assumption $b\in C^{2,2}_{b}$ and Young's inequality, we have
\begin{equation*}
\frac{d}{dt}|\nabla_{y}X^{x_1,y_1}_t-\nabla_{y}X^{x_2,y_2}_t|^2 \leq -\frac{\gamma}{2} |\nabla_{y}X^{x_1,y_1}_t-\nabla_{y}X^{x_2,y_2}_t|^2+ C(\|b\|_2,\gamma) \left(|y_1-y_2|^2+e^{-\frac{\gamma}{2}t}|x_1-x_2|^2\right).
\end{equation*}
By the comparison theorem, we obtain that
\begin{equation*}
|\nabla_{y}X^{x_1,y_1}_t-\nabla_{y}X^{x_2,y_2}_t|^2\leq C(\|b\|_2,\gamma) \left( t e^{-\frac{\gamma}{2}t}|x_1-x_2|^2+ |y_1-y_2|^2 \right).
\end{equation*}

\end{proof}

\begin{proof}[Proof of \eqref{xde2}]

Note that
\begin{equation*}
d\nabla_{x}X^{x,y}_t=(\nabla_{x} b)(X^{x,y}_t,y)\cdot\nabla_{x}X^{x,y}_tdt, ~~\nabla_{x}X^{x,y}_0=\bf{I}.
\end{equation*}
This implies
\begin{equation*}
\begin{aligned}
d(\nabla_{x}X^{x_1,y_1}_t-\nabla_{x}X^{x_2,y_2}_t)= &\left[(\nabla_{x}b)(X^{x_1,y_1}_t,y_1)\cdot \nabla_{x}X^{x_1,y_1}_t-(\nabla_{x}b)(X^{x_2,y_2}_t,y_2)\cdot\nabla_{x}X^{x_2,y_2}_t\right]dt.
\end{aligned}
\end{equation*}
Multiplying both sides by $2(\nabla_{x}X^{x_1,y_1}_t-\nabla_{x}X^{x_2,y_2}_t)$,  and using the inequality \eqref{disv}, we have
\begin{equation*}
\begin{aligned}
&\ \frac{d}{dt}|\nabla_{x}X^{x_1,y_1}_t-\nabla_{x}X^{x_2,y_2}_t|^2
\\
=&\ 2\left\langle (\nabla_{x}b)(X^{x_1,y_1}_t,y_1)\nabla_{x}X^{x_1,y_1}_t- (\nabla_{x}b)(X^{x_2,y_2}_t,y_2)\nabla_{x}X^{x_2,y_2}_t, \nabla_{x}X^{x_1,y_1}_t-\nabla_{x}X^{x_2,y_2}_t\right \rangle\\
\leq &\ 2\left\langle (\nabla_{x}b)(X^{x_1,y_1}_t,y_1)\nabla_{x}X^{x_1,y_1}_t- (\nabla_{x}b)(X^{x_1,y_1}_t,y_1)\nabla_{x}X^{x_2,y_2}_t, \nabla_{x}X^{x_1,y_1}_t-\nabla_{x}X^{x_2,y_2}_t\right\rangle
\\
&\ +  2|(\nabla_{x}b)(X^{x_1,y_1}_t,y_1)\nabla_{x}X^{x_2,y_2}_t- (\nabla_{x}b)(X^{x_1,y_1}_t,y_2)\nabla_{x}X^{x_2,y_2}_t| |\nabla_{x}X^{x_1,y_1}_t-\nabla_{x}X^{x_2,y_2}_t|
\\
&\ + 2|(\nabla_{x}b)(X^{x_1,y_1}_t,y_2)\nabla_{x}X^{x_2,y_2}_t- (\nabla_{x}b)(X^{x_2,y_2}_t,y_2)\nabla_{x}X^{x_2,y_2}_t| |\nabla_{x}X^{x_1,y_1}_t-\nabla_{x}X^{x_2,y_2}_t|
\\
\leq&\ -2\gamma|\nabla_{x}X^{x_1,y_1}_t-\nabla_{x}X^{x_2,y_2}_t|^2
+ 2\|\nabla_y\nabla_x b\|_0 |\nabla_{x}X^{x_2,y_2}_t| |y_1-y_2| |\nabla_{x}X^{x_1,y_1}_t-\nabla_{x}X^{x_2,y_2}_t|
\\
&\ + 2\|\nabla_x^2 b\|_0 |\nabla_{x}X^{x_2,y_2}_t| |X^{x_1,y_1}_t-X^{x_2,y_2}_t| |\nabla_{x}X^{x_1,y_1}_t-\nabla_{x}X^{x_2,y_2}_t|.
\end{aligned}
\end{equation*}
By \eqref{xde}, we have
\begin{equation*}
\sup_{t\ge0,x\in\mathbb{R}^{n},y\in\mathbb{R}^{m}}|\nabla_{x} X^{x,y}_t|^2 \leq e^{-\frac{\gamma t}{2}}.
\end{equation*}
Hence, by the assumption $b\in C^{2,2}_{b}$ and Young's inequality, we have
\begin{equation*}
\frac{d}{dt}|\nabla_{x}X^{x_1,y_1}_t-\nabla_{x}X^{x_2,y_2}_t|^2 \leq -\gamma |\nabla_{y}X^{x_1,y_1}_t-\nabla_{y}X^{x_2,y_2}_t|^2+ C(\|b\|_2,\gamma) e^{-\gamma t} \left(|y_1-y_2|^2+|x_1-x_2|^2\right),
\end{equation*}
and then the comparison theorem yields
\begin{equation*}
|\nabla_{x}X^{x_1,y_1}_t-\nabla_{x}X^{x_2,y_2}_t|^2\leq C(\|b\|_2,\gamma) te^{-\frac{\gamma t}{2}} \left(|y_1-y_2|^2+|x_1-x_2|^2\right).
\end{equation*}

\end{proof}

\subsection{Proof of Lemma \ref{ps}.}

\begin{proof}[Proof of \eqref{eqn-2}]
Set
\begin{equation*}
\bar{G}(y)=\int_{\mathbb{R}^n}G(x,y)\mu^{y}(dx).
\end{equation*}
Then Hypothesis ($\bf{A_{G2}}$) yields $\bar G\equiv0$. By Proposition \ref{ergodic11} and Hypothesis ($\bf{A_{G1}}$),  we have
\begin{equation*}
\begin{aligned}
|\widetilde{{G}}(x,y)|&\leq \int^{\infty}_{0}\left|\mathbb{E}[G(X^{x,y}_t,y)]-\bar{G}(y)\right|dt,
 \leq C(1+|x|^{\frac{1}{2}})\int^{\infty}_{0}e^{-\frac{\gamma t}{8}}dt
 \leq C(1+|x|^{\frac{1}{2}}).
\end{aligned}
\end{equation*}
\end{proof}

\begin{proof}[Proof of \eqref{eqn-3}]
Note that
\begin{equation*}
\nabla_{x}\widetilde{{G}}(x,y)=\int^{\infty}_{0}\mathbb{E}\left[\nabla_{x}G(X^{x,y}_t,y)\cdot \nabla_{x}X^{x,y}_t \right]dt,
\end{equation*}
where $\nabla_{x}X^{x,y}_t$ satisfies
\begin{equation*}
\left\{
\begin{aligned}
&d\nabla_{x} X^{x,y}_t=\nabla_{x}b(X^{x,y}_t,y)\cdot \nabla_{x} X^{x,y}_tdt, \\
&\nabla_{x} X^{x,y}_t|_{t=0}=I.
\end{aligned}
\right.
\end{equation*}
By \eqref{xde}, we have
\begin{equation*}
\sup_{x,y}|\nabla_{x} X^{x,y}_t|\leq Ce^{-\frac{\gamma t}{4}}.
\end{equation*}
Thus by Hypothesis ($\bf{A_{G1}}$), we have
\begin{equation*}
\sup_{x,y}|\nabla_{x} \widetilde{{G}}(x,y)|\leq C.
\end{equation*}
\end{proof}

\begin{proof}[Proof of \eqref{eqn-4}]
Set
\begin{equation*}
\widetilde{G}_{t_0}(x,y,t):=\mathbb{E}G(X^{x,y}_t, y)-\mathbb{E}G(X^{x,y}_{t+{t_0}},y) =:\widehat{G}(x,y,t)-\widehat{G}(x,y,t+{t_0}).
\end{equation*}
Then  Proposition \ref{ergodic11} implies that
\begin{equation}\label{est-5}
\lim_{t_0\rightarrow \infty} \widetilde{G}_{{t_0}}(x,y,t)=\mathbb{E}G(X^{x,y}_t, y)-\bar{G}(y) = \mathbb{E}G(X^{x,y}_t, y).
\end{equation}
On the one hand, by the Markov property, we have
\begin{equation*}
\widetilde{G}_{t_0}(x,y,t)=\widehat{G}(x,y,t)-\mathbb{E}\widehat{G}(X^{x,y}_{t_0},y,t).
\end{equation*}
Thus we have
\begin{equation*}
\begin{aligned}
\nabla_{y}\widetilde{G}_{t_0}(x,y,t)&=\nabla_{y}\widehat{G}(x,y,t)-\mathbb{E}\left[\nabla_{y}\widehat{G}(X^{x,y}_{t_0},y,t)\right]
-\mathbb{E}\left[\nabla_{x}\widehat{G}(X^{x,y}_{t_0},y,t)\cdot \nabla_y X^{x,y}_{t_0}\right].
\end{aligned}
\end{equation*}
Moreover, we also have
\begin{equation*}
\nabla_{x}\widehat{G}(x,y,t)=\mathbb{E}\left[\nabla_{x}{G}(X^{x,y}_t,y)\cdot \nabla_{x} X^{x,y}_t\right].
\end{equation*}
By Hypothesis ($\bf{A_{G1}}$) and \eqref{xde},  we have
\begin{equation}\label{est-4}
\sup_{x,y}|\nabla_{x}\widehat{G}(x,y,t)|\leq C e^{-\frac{\gamma t}{4}}.
\end{equation}
On the other hand, we have
\begin{equation}\label{est-2}
\begin{aligned}
|\nabla_{y}\widehat{G}(x_1,y,t)-\nabla_{y}\widehat{G}(x_2,y,t)|&=|\nabla_{y} \left(\mathbb{E}G(X^{x_1,y}_t,y)\right)-\nabla_{y} \left(\mathbb{E}G(X^{x_2,y}_t,y)\right)|\\
&=\mathbb{E}|\nabla_{x} G(X^{x_1,y}_t, y)\cdot \nabla_{y} X^{x_1,y}_t-\nabla_{x} G(X^{x_2,y}_t, y)\cdot \nabla_{y} X^{x_2,y}_t| \\
&\quad +\mathbb{E}|\nabla_{y} G(X^{x_1,y}_t, y)-\nabla_{y} G(X^{x_2,y}_t, y)|\\
& \leq  \mathbb{E}|\nabla_{x} G(X^{x_1,y}_t, y)\cdot \nabla_{y} X^{x_1,y}_t-\nabla_{x} G(X^{x_2,y}_t, y)\cdot \nabla_{y} X^{x_1,y}_t|\\
&\quad +\mathbb{E}|\nabla_{x} G(X^{x_2,y}_t, y)\cdot \nabla_{y} X^{x_1,y}_t-\nabla_{x} G(X^{x_2,y}_t, y)\cdot \nabla_{y} X^{x_2,y}_t|\\
&\quad +\mathbb{E}|\nabla_{y} G(X^{x_1,y}_t,y)-\nabla_{y} G(X^{x_2,y}_t,y)|\\
&:=S_1+S_2+S_3.
\end{aligned}
\end{equation}
For the term $S_1$, by the boundedness of $\nabla_{x}G $ and $\nabla_{x}\nabla_{y}G$ in Hypothesis ($\bf{A_{G1}}$), \eqref{deryb} and \eqref{xde}, we have
\begin{equation*}
S_1 \leq C \mathbb{E}\left[|X^{x_1,y}_{t}-X^{x_2,y}_{t}|\right]^{1/2}\leq C e^{-\frac{\gamma t}{8}}|x_1-x_2|^{\frac{1}{2}}.
\end{equation*}
For the term $S_2$, by the boundedness of $\nabla_{x}G$ and \eqref{xde1}, we have
\begin{equation*}
S_2 \leq C \mathbb{E}\left[|\nabla_{y} X^{x_1,y}_t-\nabla_{y} X^{x_2,y}_t|\right]^{1/2}\leq C t^{\frac{1}{4}} e^{-\frac{\gamma t}{8}}|x_1-x_2|^{\frac{1}{2}}.
\end{equation*}
For the term $S_3$, by the boundedness of $\nabla_{y}G $ and $\nabla_{x}\nabla_{y}G$ and \eqref{xde}, we have
\begin{equation*}
S_3 \leq C \mathbb{E}\left[|X^{x_1,y}_t-X^{x_2,y}_t|\right]^{1/2}\leq C e^{-\frac{\gamma t}{8}}|x_1-x_2|^{\frac{1}{2}}.
\end{equation*}
Combining these together, we achieve from \eqref{est-2} that
\begin{equation}\label{est-3}
|\nabla_{y} \widehat{G}(x_1,y,t)-\nabla_{y}\widehat{G}(x_2,y,t)|\leq C(1+t^{\frac{1}{4}})e^{-\frac{\gamma t}{8}}|x_1-x_2|^{\frac{1}{2}}.
\end{equation}
Therefore, by \eqref{est-4}, \eqref{est-3}, \eqref{xde} and \eqref{moment0},  we have
\begin{equation*}
\begin{aligned}
\left|\nabla_{y} \widetilde{G}_{t_0}(x,y,t)\right|&=\left|\mathbb{E}\left[\nabla_{y} \widetilde{G}(x,y,t)-\nabla_{y} \widetilde{G}(X^{x,y}_{t_0},y,t)\right]-\mathbb{E}\left[\nabla_{x} \widehat{G}(X^{x,y}_{t_0},x,y)\cdot \nabla_{y} X^{x,y}_{t_0}\right]\right|\\
& \leq C(1+t^{\frac{1}{4}})e^{-\frac{\gamma t}{8}} \mathbb{E}\left[|X^{x,y}_{t_0}-x|\right]^{\frac{1}{2}}+Ce^{-\frac{\gamma t}{8}}
 \leq C(1+t^{\frac{1}{4}}) e^{-\frac{\gamma t}{8}}(1+|x|^{\frac{1}{2}}).
\end{aligned}
\end{equation*}
This together with \eqref{est-5} implies that
\begin{equation*}
\begin{aligned}
\left|\nabla_{y} \widetilde{{G}}(x,y)\right|&=\left|\int^{\infty}_{0}\left(\lim_{t_0\rightarrow \infty}\nabla_{y} \widetilde{G}_{t_0}(x,y,t)\right)dt\right|
 \leq \int^{\infty}_{0} C(1+t^{\frac{1}{4}})e^{-\frac{\gamma t}{8}}(1+|x|^{\frac{1}{2}})dt
 \leq C(1+|x|^{\frac{1}{2}}).
\end{aligned}
\end{equation*}
\end{proof}

\begin{proof}[Proof of \eqref{eqn-5}]
Note that by \eqref{est-5},
\begin{equation*}
|\nabla_{y}^2 \widetilde{{G}}(x,y)|=\left|\int^{\infty}_{0}\left(\lim_{t_0\rightarrow \infty}\nabla_{y}^2\widetilde{G}_{t_0}(x,y,t)\right)dt\right|,
\end{equation*}
and
\begin{equation*}
\begin{aligned}
\nabla_{y}^2\widetilde{G}_{t_0}(x,y,t)&=\mathbb{E}\left[\nabla_{y}^2\widehat{G}(x,y,t)-\nabla_{y}^2\widehat{G}(X^{x,y}_{t_0},y,t)\right]
-\mathbb{E}\left[\nabla_{x}\nabla_{y}\widehat{G}(X^{x,y}_{t_0},y,t)\cdot\nabla_{y}X^{x,y}_{t_0}\right]\\
&\quad -\mathbb{E}\left[\nabla^2_{x}\widehat{G}(X^{x,y}_{t_0},y,t)\cdot\left(\nabla_{y}X^{x,y}_{t_0}\right)^2\right]
-\mathbb{E}\left[\nabla_{y}\nabla_{x}\widehat{G}(X^{x,y}_{t_0},y,t)\cdot\nabla_{y}X^{x,y}_{t_0}\right]
\\
&\quad -\mathbb{E}\left[\nabla_{x}\widehat{G}(X^{x,y}_{t_0},y,t)\cdot\nabla^2_{y}X^{x,y}_{t_0}\right]\\
&=:T_1-T_2-T_3-T_4-T_5.
\end{aligned}
\end{equation*}
where $\nabla_{y}X^{x,y}_t$ satisfies
\begin{equation*}
\left\{
\begin{aligned}
&d\nabla_{y} X^{x,y}_t=\nabla_{x}b(X^{x,y}_t,y)\cdot \nabla_{y} X^{x,y}_tdt+\nabla_{y} b(X^{x,y}_t,y)dt, \\
&\nabla_{y} X^{x,y}_t|_{t=0}={\mathbf{0}},
\end{aligned}
\right.
\end{equation*}
and $\nabla^2_{y}X^{x,y}_t$ satisfies
\begin{equation*}
\left\{
\begin{aligned}
d\nabla^2_{y}X^{x,y}_{t}&=\nabla^2_{x}b(X^{x,y}_{t},y)\cdot (\nabla_{y}X^{x,y}_t)^2dt +\nabla_{y}(\nabla_{x}b)(X^{x,y}_{t},y)\cdot \nabla_{y}X^{x,y}_tdt\\
&\quad +(\nabla_{x}b)(X^{x,y}_{t},y)\cdot \nabla^2_{y}X^{x,y}_t dt
+\nabla_{x}(\nabla_{y}b)(X^{x,y}_{t},y)\cdot \nabla_{y}X^{x,y}_tdt+\nabla_{y}^2b(X^{x,y}_{t},y)dt,\\
\nabla^2_{y}X^{x,y}_{t}|_{t=0}&=\mathbf{0}.
\end{aligned}
\right.
\end{equation*}

To estimate the term $T_1$, we recall that $\widehat{G}(x,y,t) =\mathbb{E}G(X^{x,y}_t, y)$. We derive
\begin{equation*}
\begin{aligned}
\nabla_{y}^2(G(X^{x_1,y}_t,y))-\nabla_{y}^2(G(X^{x_2,y}_t,y))&= \left[ \left(\nabla_{y} X^{x_1,y}_t \right)^T \cdot \nabla_{x}^2G(X^{x_1,y}_t,y)\cdot \nabla_{y} X^{x_1,y}_t - \left(\nabla_{y} X^{x_2,y}_t\right)^T \cdot \nabla_{x}^2G(X^{x_2,y}_t,y)\cdot \nabla_{y} X^{x_2,y}_t \right]\\
&\quad +2\left[\nabla_{x}\nabla_{y}G(X^{x_1,y}_t,y)\cdot\nabla_{y} X^{x_1,y}_t-\nabla_{x}\nabla_{y}G(X^{x_2,y}_t,y)\cdot\nabla_{y} X^{x_2,y}_t\right]\\
&\quad +\left[\nabla_{y}^2G(X^{x_1,y}_t,y)-\nabla_{y}^2G(X^{x_2,y}_t,y)\right]\\
&=:T_{11}+T_{12}+T_{13}.
\end{aligned}
\end{equation*}
For the term $T_{11}$, we use \eqref{xde} and \eqref{xde1} to get
\begin{equation}
\label{t11}
\begin{aligned}
\mathbb{E}\left(|T_{11}|\right)&\leq \mathbb{E}\left[\left|\left(\nabla_{y} X^{x_1,y}_t\right)^T \cdot \nabla_{x}^2G(X^{x_1,y}_t,y)\cdot \nabla_{y} X^{x_1,y}_t - \left(\nabla_{y} X^{x_1,y}_t\right)^T \cdot \nabla_{x}^2G(X^{x_2,y}_t,y) \cdot \nabla_{y} X^{x_1,y}_t \right|\right]\\
&\quad +\mathbb{E}\left[\left| \left(\nabla_{y} X^{x_1,y}_t\right)^T \cdot \nabla_{x}^2G(X^{x_2,y}_t,y) \cdot \nabla_{y} X^{x_1,y}_t - \left(\nabla_{y} X^{x_1,y}_t\right)^T \cdot \nabla_{x}^2G(X^{x_2,y}_t,y) \cdot \nabla_{y} X^{x_2,y}_t \right|\right] \\
&\quad +\mathbb{E}\left[\left| \left(\nabla_{y} X^{x_1,y}_t\right)^T \cdot \nabla_{x}^2G(X^{x_1,y}_t,y) \cdot \nabla_{y} X^{x_1,y}_t - \left(\nabla_{y} X^{x_2,y}_t\right)^T \cdot \nabla_{x}^2G(X^{x_2,y}_t,y) \cdot \nabla_{y} X^{x_2,y}_t \right|\right] \\
& \leq \|\nabla^3_{x}G\|_0 \mathbb{E}\left[|X^{x_1,y}_{t}-X^{x_2,y}_{t}| |\nabla_{y} X^{x_1,y}_t|^2\right] + \|\nabla_{x}^2G\|_0 \mathbb{E}\left[ (\left| \nabla_{y} X^{x_1,y}_t\right| + \left| \nabla_{y} X^{x_2,y}_t\right|) \left|\nabla_{y} X^{x_1,y}_t - \nabla_{y} X^{x_2,y}_t \right|\right] \\
& \leq C(1+t^{\frac{1}{2}})e^{-\frac{\gamma t}{4}}|x_1-x_2|.
\end{aligned}
\end{equation}
Similarly, for the term $T_{12}$ and $T_{13}$,  we  have
\begin{equation}\label{t12}
\begin{aligned}
\mathbb{E}\left(|T_{12}|\right)&\leq 2 \|\nabla_{x}^2\nabla_{y} G\|_0 \mathbb{E}\left[ |X^{x_1,y}_t-X^{x_2,y}_t| |\nabla_{y} X^{x_1,y}_t|\right]
 +2 \|\nabla_{x}\nabla_{y}G\|_0 \mathbb{E}\left[|\nabla_{y} X^{x_1,y}_t-\nabla_{y} X^{x_2,y}_t|\right]\\
& \leq C(1+t^{\frac{1}{2}})e^{-\frac{\gamma t}{4}}|x_1-x_2|,
\end{aligned}
\end{equation}
and
\begin{equation}
\label{t13}
\mathbb{E}\left(|T_{13}|\right) \leq \|\nabla_x \nabla_{y}^2G\|_0 |X^{x_1,y}_t-X^{x_2,y}_t| \le Ce^{-\frac{\gamma t}{4}}|x_1-x_2|.
\end{equation}
Combining \eqref{t11}--\eqref{t13}, we obtain
\begin{equation}
\label{T1}
|T_1| \leq C(1+t^{\frac{1}{2}})e^{-\frac{\gamma t}{4}}\mathbb{E}|X^{x,y}_{t_0}-x|\leq C (1+t^{\frac{1}{2}})e^{-\frac{\gamma t}{4}}(1+|x|).
\end{equation}

For the term $T_2$, again by the definition of $\widehat{G}(x,y,t)$, we have
\begin{equation*}
\nabla_{x}\nabla_{y}\widehat{G}(x,y,t) = \mathbb{E}\left[ (\nabla_{x}X^{x,y}_t)^T \cdot \nabla_{x}^2G(X^{x,y}_{t},y)\cdot \nabla_{y}X^{x,y}_t
+\nabla_{x}G(X^{x,y}_{t},y)\cdot\nabla_{x}\nabla_{y}X^{x,y}_t
+\nabla_{x}\nabla_{y}G(X^{x,y}_{t},y)\cdot\nabla_{x}X^{x,y}_{t} \right].
\end{equation*}
By \eqref{xde} and \eqref{xde1}, we get
\begin{equation*}
\begin{aligned}
|\nabla_{x}\nabla_{y}\widehat{G}(x,y,t)|& \leq C(1+t^{\frac{1}{2}})e^{-\frac{\gamma t}{4}}.
\end{aligned}
\end{equation*}
Therefore we have
\begin{equation*}
|T_2|\leq C( 1+ t^{\frac{1}{2}})e^{-\frac{\gamma t}{4}}.
\end{equation*}

For the term $T_3$, we have
\begin{equation*}
\begin{aligned}
\nabla_{x}^2\widehat{G}(x,y,t)=\mathbb{E}\left[ \left(\nabla_{x} X^{x,y}_t\right)^T \cdot \nabla_{x}^2G(X^{x,y}_t,y)\cdot \nabla_{x} X^{x,y}_t
+\nabla_{x} G(X^{x,y}_t,y)\cdot \nabla^2_{x}X^{x,y}_t\right].
\end{aligned}
\end{equation*}
By \eqref{xde} and \eqref{xde2}, we obtain
\begin{equation*}
|T_3| \leq C\left(e^{-\frac{\gamma t}{2}} +t^{\frac{1}{2}} e^{-\frac{\gamma t}{4}} \right).
\end{equation*}

For the term $T_4$, we have
\begin{equation*}
\begin{aligned}
\nabla_{y}\nabla_{x}\widehat{G}(x,y,t)&= \mathbb{E}\left[(\nabla_{y} X^{x,y}_t)^T \cdot \nabla_{x}^2G(X^{x,y}_t,y)\cdot \nabla_{x}  X^{x,y}_t
+\nabla_{y} \nabla_{x}G(X^{x,y}_t,y)\cdot \nabla_{x} X^{x,y}_t + \nabla_{x} G(X^{x,y}_t,y)\cdot \nabla_{y}\nabla_{x}X^{x,y}_t\right].
\end{aligned}
\end{equation*}
By \eqref{xde} and \eqref{xde2}, we have
\begin{equation*}
|T_4|\leq C( 1+ t^{\frac{1}{2}})e^{-\frac{\gamma t}{4}}.
\end{equation*}

For the term $T_5$, we  have
\begin{equation*}
\begin{aligned}
\nabla_{x} \widehat{G}(x,y,t)=\mathbb{E}\left[ \nabla_{x}G(X^{x,y}_t,y)\cdot \nabla_{x} X^{x,y}_t \right].
\end{aligned}
\end{equation*}
By \eqref{xde} and \eqref{xde1}, we get
\begin{equation}
\label{T5}
|T_5| \leq Ce^{-\frac{\gamma t}{4}}.
\end{equation}
Combining  \eqref{T1} and \eqref{T5}, we obtain
\begin{equation*}
|\nabla_{y}^2 \widetilde{{G}}(x,y)|\leq C \int_0^\infty \left[ (1+t^{\frac{1}{2}})e^{-\frac{\gamma t}{4}}(1+|x|) + e^{-\frac{\gamma t}{2}} \right] dt \le C(1+|x|).
\end{equation*}
\end{proof}

\section{Acknowledgments.}
\medskip
\textbf{Acknowledgements}.  We would like to thank Dr.~Wei Wei (Huazhong University of Science and Technology, China) and Prof.~Xiaobin Sun (Jiangsu Normal University, China) for their helpful discussions. The research of Y. Zhang was supported by the NSFC grant 11901202. The work of Q. Huang was supported by FCT, Portugal, project PTDC/MAT-STA/28812/2017. The research
of X. Wang was supported by the NSFC grant 11901159 and is supported by the Natural Science Foundation of Henan Province of China
(Grant No. 232300420110). The research of Z. Wang was supported by the NSFC grant. 11531006 and 11771449.


\end{document}